\documentclass[11pt]{article}

\usepackage[T1]{fontenc}
\usepackage[english]{babel}
\usepackage[a4paper,margin=1in]{geometry}
\usepackage{amsmath,amssymb,amsthm,mathtools}
\usepackage{microtype}
\usepackage{hyperref}
\usepackage{tikz}
\usepackage{float}
\usepackage{enumitem}
\usepackage{cleveref}
\allowdisplaybreaks
\hypersetup{
  hidelinks,
  pdftitle={Intersections of Oriented Graphs and Tournaments},
  pdfauthor={Zhanping Yang and Qinghou Zeng},
  pdfsubject={Intersection discrepancy for oriented graphs and tournaments},
  pdfkeywords={discrepancy, oriented graphs, tournaments, graph intersections, transpositions}
}

\newtheorem{theorem}{Theorem}[section]
\newtheorem{lemma}[theorem]{Lemma}

\newtheorem{proposition}[theorem]{Proposition}
\newtheorem{observation}[theorem]{Observation}
\newtheorem{claim}[theorem]{Claim}
\newtheorem{problem}[theorem]{Problem}

\theoremstyle{remark}

\newcommand{\E}{\mathbb{E}}

\newcommand{\disc}{\operatorname{disc}}
\newcommand{\R}{\mathbb{R}}
\newcommand{\TT}{\mathrm{TT}}

\newcommand{\1}{\mathbf{1}}

\title{Intersections of Oriented Graphs and Tournaments}
\author{
  Zhanping Yang\thanks{Center for Discrete Mathematics, Fuzhou University, Fuzhou, Fujian 350108, China. Email: \texttt{yangzp@163.com}.}
  \and
  Qinghou Zeng\thanks{Corresponding author. Center for Discrete Mathematics, Fuzhou University, Fuzhou, Fujian 350108, China. Research supported by National Key R\&D Program of China (Grant No. 2023YFA1010202) and National Natural Science Foundation of China (Grant No. 12371342).  Email: \texttt{zengqh@fzu.edu.cn}.}
}
\date{}

\begin{document}
\maketitle

\begin{abstract}
Given two tournaments of order $n$, Bollob\'as and Scott defined their discrepancy as the largest deviation of their overlap from its random average under relabelling, and they asked whether the resulting discrepancy is always $\Omega (n^{3/2})$. We answer this question by proving that there is an absolute constant $c>0$ such that every pair of tournaments $T,U$ of order $n$ has discrepancy at least $cn^{3/2}$.

More generally, if $D$ and $H$ are oriented graphs of order \( n \) with \( e(D) = p \binom{n}{2} \) and \( e(H) = q \binom{n}{2} \) satisfying $16/n \leq p, q \leq 1 - 16/n$, then  there is an absolute constant \( c > 0 \) such that their discrepancy is at least $c(p(1-p)q(1-q))^{3}n^{3/2}$. We also show that these two-graph estimates extend to intersections of any fixed number of graphs, tournaments, and oriented graphs.
\end{abstract}

\noindent\textbf{2020 Mathematics Subject Classification.} 05C35, 05C69.

\noindent\textbf{Keywords.} discrepancy, oriented graphs, tournaments, graph intersections.

\section{Introduction}\label{sec:introduction}
An oriented graph is a digraph with no loops and no pair of oppositely directed arcs; a tournament is an orientation of a complete graph. Let $D$ and $H$ be oriented graphs of order $n$, with $e(D)=p\binom{n}{2}$ and $e(H)=q\binom{n}{2}$. If a uniformly random bijection is used to place $D$ and $H$ on a common vertex set, then each arc of $H$ belongs to the relabelled copy of $D$ with probability $p/2$. Hence their expected number of common arcs is $\frac{pq}{2}\binom{n}{2}$. For a bijection $\pi$ between the vertex sets, let $E_\pi(D)$ denote the relabelled arc set of $D$. Define
\begin{align}
	\disc^+(D,H)
	&=\max_{\pi} |E_\pi(D) \cap E(H)|-\frac{pq}{2}\binom n2,\label{eq:disc-plus-def}\\
	\disc^-(D,H)
	&=\frac{pq}{2}\binom n2-\min_{\pi} |E_\pi(D) \cap E(H)|,\label{eq:disc-minus-def}
\end{align}
and
\begin{equation*}
\disc(D,H)=\max\{\disc^+(D,H),\disc^-(D,H)\}.
\end{equation*}
	Thus $\disc^+(D,H)$ measures how much more than the random average the two oriented graphs can be made to agree, whereas $\disc^-(D,H)$ measures how much less they can be made to agree.

The study of discrepancy in graphs and related set systems originates in work of Erd\H{o}s and Spencer \cite{ErdosSpencer1972} on imbalances in colorings and in the graph-partition results of Erd\H{o}s, Goldberg, Pach and Spencer \cite{ErdosGoldbergPachSpencer1988}. Signed versions of discrepancy, together with extensions to graphs and hypergraphs, were developed systematically by Bollob\'as and Scott \cite{BollobasScottDiscrepancy2006}. Bollob\'as and Scott \cite{BollobasScottGraphs2011,BollobasScottHypergraphs2015} studied intersection discrepancy in graphs and hypergraphs, establishing a product lower bound for graphs under moderate-density assumptions and showing that additional structural hypotheses are needed in the hypergraph setting. Bollob\'as and Scott \cite{BollobasScottRandom2015} also proved that two independent random tournaments typically have discrepancy of order $n^{3/2}\sqrt{\log n}$ and obtained parallel results for random hypergraphs. Independently, Ma, Naves and Sudakov \cite{MaNavesSudakov2015} established closely related estimates for random graphs and hypergraphs. More recently, R\"aty, Sudakov and Tomon~\cite{RatySudakovTomon}
studied the positive discrepancy of graphs, obtaining density-dependent
lower bounds and establishing connections with MaxCut and spectral
parameters.

The tournament case already contains a natural extremal obstruction. Let $\TT_n$ be the transitive tournament on $[n]$, with $i\to j$ whenever $i<j$. Relabelling a tournament against $\TT_n$ is equivalent to choosing a linear ordering and counting the arcs directed forward in that ordering. Spencer \cite{Spencer1971,Spencer1980} proved that
\begin{equation*}
\min_{|T|=n}\disc^+(T,\TT_n)=\Theta(n^{3/2}).
\end{equation*}
 Since reversing the order of $\TT_n$ reverses every arc, one also has $\disc^+(T,\TT_n)=\disc^-(T,\TT_n)$. Thus $n^{3/2}$ is the smallest possible uniform scale already when one member of the pair is transitive.
 
 Motivated by the graph-intersection product theorem and by Spencer's result, Bollob\'as and Scott posed the following questions.
\begin{problem}[\textbf{Bollob\'as and Scott} \cite{BollobasScottHypergraphs2015,BollobasScottRandom2015}]\label{prob:tournaments}
Is there an absolute constant $c>0$ such that every pair $T,U$ of tournaments of order $n$ satisfies $\disc(T,U)\ge cn^{3/2}$?
\end{problem}
\begin{problem}[\textbf{Bollob\'as and Scott} \cite{BollobasScottHypergraphs2015}]
What can be said about the discrepancy of two oriented graphs of the same order?
\end{problem}
Our first main result answers Problem~\ref{prob:tournaments}.
\begin{theorem}\label{thm:tournament-intro}
	There is an absolute constant $c_1>0$ such that, for every $n\ge2$ and every pair of tournaments $T,U$ of order $n$,
	\begin{equation*}
	\disc^+(T,U)\disc^-(T,U)\ge c_1n^3.
	\end{equation*}
	Consequently,
	\begin{equation*}
	\disc(T,U)\ge \sqrt{c_1}\,n^{3/2}.
	\end{equation*}
\end{theorem}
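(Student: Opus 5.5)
Encode both tournaments as skew-symmetric sign matrices: $A_{ij}=1$ if $i\to j$ in $T$, $A_{ij}=-1$ if $j\to i$, and $A_{ii}=0$; define $B$ likewise for $U$. For a permutation $\pi$, the overlap is
\[
|E_\pi(T)\cap E(U)| = \tfrac12\binom n2 + \tfrac14\sum_{i<j} A_{\pi(i)\pi(j)}B_{ij}.
\]
So with
\[
f(\pi)=\tfrac14\sum_{i<j}A_{\pi(i)\pi(j)}B_{ij}=\tfrac18\sum_{i,j}A_{\pi(i)\pi(j)}B_{ij},
\]
we have $\disc^+=\max_\pi f$ and $\disc^-=-\min_\pi f$. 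Also $\E_\pi f=0$, because $A$ has zero diagonal and $\sum_{i\ne j}A_{ij}=0$ by skew-symmetry. The claimed product bound will follow from two facts:
\begin{enumerate}[label=(\roman*)]
\item $\E_\pi f(\pi)^2\ge c n^3$ for a uniformly random $\pi$;
\item $|f(\pi)|$ cannot be concentrated on one side.
\end{enumerate}
Concretely, since $\E f=0$,
\[
\E f^2\le \E\bigl[f^+\cdot \max f\bigr]+\E\bigl[f^-\cdot(-\min f)\bigr]\le (\disc^+ + \disc^-)\,\E|f|/2.
\]
This alone only gives a bound on the sum. To get the product, use $\E f^2\le \E[f^+]\max f+\E[f^-](-\min f)$ together with $\E f^+=\E f^-=\tfrac12\E|f|$ and $\E|f|\le (\E f^2)^{1/2}$. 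If one side is tiny, say $\disc^-\le \varepsilon n^{3/2}$, then $f\ge -\varepsilon n^{3/2}$ everywhere, so $\E f^+=\E f^-\le\varepsilon n^{3/2}$, while $\E f^2\le \E f^+\cdot \disc^+$. This forces $\disc^+\ge cn^{3}/(\varepsilon n^{3/2})$. Balancing the two cases gives $\disc^+\disc^-\ge c' n^3$ in general. More cleanly: $\E f^2\le \max f\cdot \E f^+ \le \max f\cdot(-\min f)$, because $\E f^+=\E f^-\le -\min f$. So the whole theorem reduces to the variance bound $\operatorname{Var}_\pi f\ge c_1 n^3$.

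For the variance, I would use the Hoeffding/Fourier decomposition of functions on $S_n$. This works because $f$ is a quadratic "permutation statistic" $\sum_{i,j}A_{\pi(i)\pi(j)}B_{ij}$. Write $a_i=\sum_j A_{ij}$, the out-degree minus in-degree (the score deviation), and similarly $b_i$ for $B$. Let $\bar A$ denote the part of $A$ remaining after subtracting its row and column projections, which is the doubly-centered part; define $\bar B$ similarly. Standard computations for quadratic assignment statistics (the Daniels/Mantel variance formula) give
\[
\operatorname{Var} f \asymp \frac{1}{n^3}\Bigl(\sum_i a_i^2\Bigr)\Bigl(\sum_i b_i^2\Bigr)+\frac{1}{n^2}\,\|\bar A\|_F^2\,\|\bar B\|_F^2,
\]
up to absolute constants, with nonnegative contributions. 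The skew-symmetry of $A$ and $B$ is what kills cross terms and keeps both pieces nonnegative; I would verify this by writing the exact Mantel-type formula for the variance of $\sum A_{\pi(i)\pi(j)}B_{ij}$. Next, $\|\bar A\|_F^2=\|A\|_F^2-\Theta(\sum a_i^2/n)$, and $\|A\|_F^2=n(n-1)$. If $\sum a_i^2\le \delta n^3$, then $\|\bar A\|_F^2\ge n^2/2$. If the same holds for $B$, the second term is at least $n^4/(4n^2)$, which is only $\Theta(n^2)$: not enough. So the crude bound fails, and the real source of $n^3$ must come from the degree term or from a finer structure.

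This is the main obstacle. Random tournaments have $\sum a_i^2\approx n^2$, which would make $\operatorname{Var}\approx n$ from the first term and $n^2$ from the second. That contradicts the known $n^{3/2}$ lower bound only in appearance: the bound $\disc\ge cn^{3/2}$ cannot come from the global variance over uniform $\pi$. I would therefore switch to a two-step local argument, following Spencer's proof for $\TT_n$. First, restrict to random $\pi$ inside a structured family: pair up vertices and decide each transposition independently. Concretely, fix a matching and a base bijection, then apply independent random transpositions $\sigma_k\in\{\mathrm{id},(x_k\,y_k)\}$. Then $f$ becomes a quadratic form $\sum_k \epsilon_k \alpha_k+\sum_{k<l}\epsilon_k\epsilon_l\beta_{kl}$ in Rademacher variables, plus a constant. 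Second, choose the base bijection greedily, as in Spencer's argument, so that the linear coefficients satisfy $\sum_k|\alpha_k|\ge c n^{3/2}$. Each $\alpha_k$ is a sum over $\Theta(n)$ other vertices of $\pm1$-type terms comparing how $x_k$ and $y_k$ interact with the rest of $T$ versus $U$. Averaging over a random base bijection makes each $\alpha_k$ typically of size $\sqrt n$, by anticoncentration of a sum of $\Theta(n)$ nondegenerate signs; nondegeneracy needs $x_k,y_k$ to be distinguishable, which holds for most pairs of a tournament. Then a degree-2 Rademacher chaos with linear part of $\ell_2$-mass $\sum\alpha_k^2\gtrsim n\cdot n=n^2$ has variance $\ge n^2$. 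With a sign choice instead (the max or min over $\epsilon$), it achieves both $\ge c\sum_k|\alpha_k|$ and $\le -c\sum_k|\alpha_k|$ relative to its mean. For this I would use the standard fact that a degree-2 polynomial whose linear part is large in $\ell_1$ attains deviations of that order on both sides: pick $\epsilon_k=\operatorname{sgn}\alpha_k$ on a random half, and control the quadratic part by a cut-norm bound. This yields both $\max_\pi f-\E f$ and $\E f-\min_\pi f$ of order $n^{3/2}$, with the mean controlled by averaging over the base bijection. The hardest step will be showing that the quadratic chaos part does not cancel the linear gain on either side simultaneously.
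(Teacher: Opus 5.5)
You were right to abandon the variance route: over uniform $\pi$ the variance of $\langle A_\pi,B\rangle$ is only of order $n^2$. (Your reduction also drops a term; the correct form is $\E f^2\le \disc^+\E f^++\disc^-\E f^-\le 2\disc^+\disc^-$.) Your second route has the same skeleton as the paper: fixed disjoint transpositions, a linear-plus-quadratic expansion, and anticoncentration of single-swap effects over a random base. But the step you call the hardest is a genuine gap, and your plan for it would fail.

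\textbf{The quadratic term.} You try to show that \emph{each} of $\max_\pi f$ and $-\min_\pi f$ is $\ge cn^{3/2}$, by using a cut-norm bound to show the quadratic part cannot cancel the linear gain. The interaction coefficients $\beta_{kl}$ are $O(1)$ on $\Theta(n^2)$ pairs, and on the sign-aligned set the quadratic part can be of order $n^2$. For example, take $T=U=\TT_n$, base relabelling the reversal, and $\tau_k$ swapping $k$ and $k+n/2$. Then every $\delta_k=n-1$, while $\sum_{k<l}\beta_{kl}=-n^2/4+n/2$. Even for random-like $\beta$, a cut-norm bound only gives $O(n^{3/2})$, the same order as $\sum_k|\alpha_k|$, so the quadratic part can never be shown negligible this way. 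The one-sided claim is also stronger than the theorem, and nothing in your sketch supports it.

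The paper does not control the quadratic term; it exploits it, in the amplification lemma of Section 3. Let $\gamma=\E_{\rho,\sigma}|\langle (A_\rho)_\tau,B_\sigma\rangle-\langle A_\rho,B_\sigma\rangle|$ and write $\disc^+=n\gamma/(100\alpha)$ with $\alpha>1$. The argument then runs as follows.
\begin{itemize}
\item Averaging gives a base with $\alpha\langle A,B\rangle+\Delta^+\ge t\gamma/2$.
\item Apply each favourable transposition independently with probability $1/\alpha$. Since no value exceeds $\disc^+$, the quadratic coefficient must satisfy $A_2\le-\alpha n\gamma/7$.
\item With $q(p)=p\Delta^++p^2A_2$, the identity $q(1)-2q(1/2)=A_2/2$ gives some $p\in\{1/2,1\}$ with $|q(p)|\ge\alpha n\gamma/42$.
\item Since both ends of every change are at most $\disc^+$, some relabelling has correlation $\le-\tfrac{29}{2100}\alpha n\gamma$.
\end{itemize}
The factors of $\alpha$ cancel, so $\disc^+\disc^-\ge10^{-4}n^2\gamma^2$ whatever $\alpha$ is. This is exactly why the theorem is stated as a product.

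\textbf{The single-swap bound.} The other missing piece is $\gamma\ge c\sqrt n$ itself. Your justification, that $x_k,y_k$ are ``distinguishable'' for most pairs, is not the right criterion. What is needed is that $z\mapsto c(x,z)-c(y,z)$ be far from \emph{constant} once the score (gradient) part is removed. For $\TT_n$ and the pair $(1,n)$, the two vertices differ at every $z$, yet the difference vector is identically $2$ and the swap effect does not fluctuate at all.

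The paper handles this by splitting $A=S^{(1)}+K^{(1)}+R+C$. The score parts contribute a constant $c_{uvxy}$, independent of the random bijection, and this is harmless because $\E|Z+a|\ge\E|Z|/2$ when $\E Z=0$. The fluctuating part is a random permutation sum of two-dimensional row-difference vectors. Its second moment is $\operatorname{tr}(\Sigma_T\Sigma_U)/(n-3)$, which averages to order $\|C_T\|_{\mathrm F}^2\|C_U\|_{\mathrm F}^2/n^3$. Its fourth moment is $O(n^2)$ by Azuma applied to the Doob martingale, and H\"older then gives anticoncentration.

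The structural input is the triangle-circulation identity $C(T)=2n\|C_T\|_{\mathrm F}^2$. Every triangle of a tournament has odd circulation, so $\|C_T\|_{\mathrm F}^2\ge (n-1)(n-2)/12$. Hence $\gamma\ge c\sqrt n$, and amplification gives $\disc^+\disc^-\ge cn^3$.
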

The exponent $3/2$ is optimal, by Spencer's theorem. For general oriented graphs we obtain a quantitative result under the same moderate-density condition that appears in the graph-intersection theorem.
\begin{theorem}\label{thm:moderate-density}
There is an absolute constant $c_2>0$ with the following property. Let $D,H$ be oriented graphs of order $n$ such that $e(D)=p\binom n2$ and $e(H)=q\binom n2$, where ${16}/{n}\le p,q\le1-{16}/{n}$. Then
\begin{equation*}
\disc^+(D,H)\disc^-(D,H)\ge c_2 \left(p(1-p)q(1-q)\right)^6n^3.
\end{equation*}
Consequently,
\begin{equation*}
\disc(D,H)\ge \sqrt{c_2}\left(p(1-p)q(1-q)\right)^3n^{3/2}.
\end{equation*}
\end{theorem}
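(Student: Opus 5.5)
The plan is to study the random variable $X_\pi=|E_\pi(D)\cap E(H)|$ for a uniformly random bijection $\pi$, and to compare its lower tail with its upper tail. We have $\E X_\pi=\frac{pq}{2}\binom n2$. Write $X_\pi-\E X_\pi=\sum_{\text{arcs}}$ of centred indicators. For oriented graphs, each pair $\{u,v\}$ of $D$ carries a value $a_{uv}\in\{1,-1,0\}$: it is $\pm1$ according to the orientation, and $0$ for a non-arc. Similarly $H$ gives $b_{xy}$. Since an arc agrees exactly when both are nonzero with the same sign, $X_\pi=\frac12\sum(|a||b|+a b)$ over matched pairs. This splits into an \emph{undirected} part $\frac12\sum |a_{\pi}||b|$ and an \emph{antisymmetric} part $\frac12\sum a_\pi b$.

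The undirected part is exactly the intersection count of the underlying graphs. The antisymmetric part has mean zero, because reversing orientations is symmetric under the random bijection only on average; more precisely, the sum $\sum_{u\ne v}a_{uv}=0$. The key structural fact we aim for is that the antisymmetric part has a second-moment/fourth-moment profile at scale $n^{3/2}$. Concretely, decompose $a_{uv}$ as $(d^+_u-d^-_u)$-type degree terms plus a residual, via a Hoeffding/ANOVA-type decomposition on the symmetric group. The linear (degree) component gives $\sum_u \alpha_u\beta_{\pi(u)}$, where $\alpha_u=d^+_D(u)-d^-_D(u)$ and similarly $\beta$. The quadratic component is the residual bilinear form.

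The main estimate is a lower bound $\operatorname{Var}(X_\pi)\ge c\,(p(1-p)q(1-q))^{O(1)}n^3$, together with a hypercontractive bound $\E (X_\pi-\E X_\pi)^4\le C(\operatorname{Var}X_\pi)^2$. The latter holds because $X_\pi$ is a polynomial of degree at most $2$ in the permutation, and low-degree functions on $S_n$ satisfy such moment comparisons. These two bounds feed a Paley–Zygmund-type argument that gives both $\Pr(X-\E X\ge c\sigma)>0$ and $\Pr(X-\E X\le -c\sigma)>0$. Hence $\disc^+,\disc^-\ge c\sigma$ and the product is $\ge c\sigma^2$. For tournaments, $p=q=1$ and only the antisymmetric part remains; the stated condition excludes $p=1$, so Theorem~\ref{thm:tournament-intro} is handled by the same argument with the undirected part vanishing.

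The variance lower bound is the heart of the proof, and I expect it to be the main obstacle. For the quadratic residual component, its variance is of order $n^{-2}\|\tilde a\|_F^2\|\tilde b\|_F^2$, where $\tilde a,\tilde b$ are the degree-free residual matrices. By orthogonality of the components, $\operatorname{Var}\ge$ the residual contribution. This is at least order $n^{-2}\cdot n^2\cdot n^2=n^2$, which is too small on its own. So the $n^3$ must come from the linear components: the variance of $\sum_u\alpha_u\beta_{\pi(u)}$ is about $\frac1n\|\alpha\|^2\|\beta\|^2$ (after centring), together with the analogous undirected-degree terms $d_D(u)-p(n-1)$.

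The danger is that all degree vectors are nearly constant, as in regular tournaments. In that case I would use Spencer's mechanism instead. Near-regularity forces large residual norms $\|\tilde a\|_F^2\approx n^2$, and I would replace random $\pi$ by a two-stage choice. First fix a random bijection. Then, for a random partition into halves, compose with swapping structure inside. The aim is that conditional degree vectors relative to one half become non-constant, with norm $\Omega(n^{3/2})$: a random half-set $S$ gives $\sum_u(\sum_{v\in S}a_{uv})^2\approx n^2/4$ typical squared deviation, so $\|\alpha^S\|^2\asymp n^2$. Applying the linear-term bound to the bipartite restriction then yields variance $\frac1n\cdot n^2\cdot n^2=n^3$. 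The density factors $(p(1-p)q(1-q))^6$ come from controlling these norms through $\sum_{u,v}a_{uv}^2=p\binom n2$ and the complementary counts, and the hypothesis $16/n\le p$ keeps the degree terms from degenerating.
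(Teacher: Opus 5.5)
There is a genuine gap in the overall strategy, not just in one step. Your scheme (variance of order $n^3$, a fourth-moment bound, and Paley--Zygmund around the mean $\frac{pq}{2}\binom n2$) would show that $\disc^{+}(D,H)$ and $\disc^{-}(D,H)$ are \emph{each} of order $n^{3/2}$, up to density factors. Under the theorem's hypotheses this is false, which is why the statement is a product bound.

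Take $n\equiv 2\pmod 4$ and split $V=X\cup Y$ with $|X|=|Y|=n/2$. Let $H$ consist of all arcs from $X$ to $Y$. Let $D$ be a regular tournament on a set $S$ of $n/2$ vertices, with no other arcs. Then $p\approx 1/4$ and $q\approx 1/2$. For every $\pi$, the overlap is $e(P\to Q)$, where $(P,Q)$ is a bipartition of the regular tournament $D_\pi[\pi^{-1}(S)]$. Since $e(P\to Q)-e(Q\to P)=\sum_{s\in P}(d^+(s)-d^-(s))=0$, the overlap is $|P||Q|/2\le (n^2-4)/32$. The mean is $n^2(n-2)/(32(n-1))$, so $\disc^{+}(D,H)=(n-2)^2/(32(n-1))<n/32$, while $\disc^{-}(D,H)=\Theta(n^2)$.

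In this example the antisymmetric part $\langle (K_D)_\pi,K_H\rangle$ vanishes for every $\pi$ (indeed $C_H=0$), contrary to the structural fact you aim for. The variance under uniform $\pi$ is only $\Theta(n^2)$, and no mean-zero argument can produce an upward deviation of order $n^{3/2}$.

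The half-partition repair does not escape this. If the two-stage choice yields a uniform permutation, the variance is unchanged. If it does not, the centring at $\frac{pq}{2}\binom n2$ is lost, and at best you bound $\disc^{+}+\disc^{-}$, never the product. Moreover, writing $\sum_{y\in S'}a_{\pi(x)\pi(y)}b_{xy}$ as $\alpha^S_{\pi(x)}\beta_x$ requires $b_{xy}$ to be independent of $y\in S'$, which is a special feature of $\TT_n$. Even there, Spencer's gain comes from \emph{choosing} each vertex's side by the sign of $\alpha^S$, an optimisation step rather than a variance computation. Separately, the blanket hypercontractivity claim for low-degree functions on $S_n$ is false, e.g.\ for $\mathbf{1}_{\{\pi(1)=1\}}-1/n$.

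What is missing is a mechanism that trades $\disc^{+}$ against $\disc^{-}$, and a different source for the $n^{3/2}$ scale. The paper uses Bollob\'as--Scott transposition amplification (Lemma~\ref{lem:amplification}): $\disc^{+}\disc^{-}\ge c_0n^2\gamma(A,B)^2$, where $\gamma$ is the mean absolute change caused by one transposition at random relabellings. If $\disc^{+}$ is small, applying a random subset of the $\lfloor n/2\rfloor$ disjoint beneficial transpositions forces the interaction coefficient $A_2$ to be very negative, which makes $\disc^{-}$ large.

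The paper then shows $\gamma\gtrsim\sqrt n$. By Lemma~\ref{lem:exact-two-dimensional-swap}, a swap's effect is a constant plus a random permutation sum of the row-difference vectors $u^H_{xy}$ and $u^D_{uv}$. Lemma~\ref{lem:EYphi} bounds this below using Azuma and H\"older, not hypercontractivity, giving $\gamma\gtrsim(\|R_D\|_{\mathrm F}^2\|R_H\|_{\mathrm F}^2+\|C_D\|_{\mathrm F}^2\|C_H\|_{\mathrm F}^2)^{3/2}/n^{11/2}$. Finally, the four-vertex support identity and the good-$4$-cycle lemma give $\|R_D\|_{\mathrm F}^2\gtrsim p^2(1-p)^2n^2$, and likewise for $H$; your proposal has no substitute for this combinatorial input. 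So $n^{3/2}$ comes from about $n/2$ disjoint swaps, each worth about $\sqrt n$, converted into a product bound — not from the variance of $\langle A_\pi,B\rangle$.
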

We also record fixed-family extensions of the two-graph results. Let $r\ge 2$, and let $G_1,\ldots,G_r$ be graphs of order $n$. For $1\le i\le r$, write $e(G_i)=p_i\binom{n}{2}$ and $0\le p_i\le 1$. For $\boldsymbol{\pi}=(\pi_1,\ldots,\pi_r)\in S_n^r$, let $G_i^{\pi_i}$ denote the graph obtained from $G_i$ by the relabelling
$\pi_i$, and define
\begin{equation*}
\operatorname{disc}_r^+(G_1,\ldots,G_r)=\max_{\boldsymbol{\pi}\in S_n^r} \left|
\bigcap_{i=1}^r E_{\pi_i}(G_i)\right|-\binom{n}{2}\prod_{i=1}^r p_i,
\end{equation*}
and
\begin{equation*}
\operatorname{disc}_r^-(G_1,\ldots,G_r)=\binom{n}{2}\prod_{i=1}^r p_i-\min_{\boldsymbol{\pi}\in S_n^r}\left|\bigcap_{i=1}^r E_{\pi_i}(G_i)\right|.
\end{equation*}
For $r=2$, Bollob\'as and Scott \cite{BollobasScottGraphs2011} proved the following theorem.
\begin{theorem}[\textbf{Bollob\'as and Scott} \cite{BollobasScottGraphs2011}]\label{thm:jgt2011}
Let \( G \) and \( H \) be graphs of order \( n \), and suppose that \( e(G) = p\binom{n}{2} \) and  \( e(H) = q\binom{n}{2} \), where \( 16/n \leq p, q \leq 1 - 16/n \). Then  
\begin{equation*}
\operatorname{disc}^+(G,H)\operatorname{disc}^-(G,H) \geq p^4(1-p)^4q^4(1-q)^4n^3/10^{20}.
\end{equation*}
\end{theorem}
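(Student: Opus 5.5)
We would prove the statement by a second-moment argument over random bijections, followed by a local-improvement step that converts variance into deviation of definite sign. Work with centred weights. Put $g(x,y)=\mathbf 1[xy\in E(G)]-p$ on unordered pairs of $V(G)$, and $h$ likewise on $V(H)$ with $q$. For a bijection $\pi$, the centred overlap is
\[
X(\pi)=\sum_{xy} g(x,y)\,h(\pi x,\pi y),
\]
and $\E_\pi X=0$. Decompose $g$ through vertex degrees, $g(x,y)=a(x)+a(y)+g_2(x,y)$, where $a(x)=(d_G(x)-p(n-1))/(n-2)$ up to normalisation and $g_2$ has vanishing row sums. Decompose $h=b(u)+b(v)+h_2$ in the same way. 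The variance of $X$ under a uniform $\pi$ then splits into a degree part, of order $\frac1n\|a\|^2\|b\|^2\cdot n^2$, and a part of order $\|g_2\|^2\|h_2\|^2/n^2$. There are two cases.

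\emph{Case 1: one of the graphs has large degree variance.} Suppose $\sum_x(d_G(x)-p(n-1))^2$ and $\sum_u(d_H(u)-q(n-1))^2$ are both large. Align the degree sequences monotonically to push the overlap up, and anti-monotonically to push it down. Rearrangement gives a gain of $\sum a(x)b(\pi x)$ of order $\sqrt{\mathrm{Var}(d_G)\mathrm{Var}(d_H)}$, summed over $n$ vertices. If instead only one degree sequence is spread, we use the pair-part. The key tool is Spencer-type control: for a random $\pi$ restricted to a matched block structure, $g_2$ against $h_2$ fluctuates by $\Omega(\|g_2\|\|h_2\|/n)$. For graphs with density in $[16/n,1-16/n]$, a standard count (using that $G$ is neither empty nor complete on many pairs) shows $\|g_2\|^2\ge c\,p^2(1-p)^2n^2$ unless the degrees are very spread. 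The same holds for $h_2$.

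\emph{Case 2: turn the fluctuation into one-sided bounds.} A symmetric variance bound alone gives only $\disc^+ + \disc^-$, not their product. Following the standard Bollob\'as--Scott device, we find a random model in which $X$ is a sum of many independent, roughly symmetric contributions. Fix a random matching of $V(G)$ into pairs $\{x_i,x_i'\}$ and a base bijection $\pi_0$. Then let $\pi$ independently swap the images of each pair, and write $X=X_0+\sum_i\epsilon_i Y_i+\sum_{i<j}\epsilon_i\epsilon_j Z_{ij}$. The linear part has variance $\sum Y_i^2\gtrsim p^2(1-p)^2q^2(1-q)^2 n^3$ on average over the matching. A hypercontractivity/anti-concentration bound for degree-2 Rademacher chaos then shows $X-X_0$ exceeds $+c\sigma$ and $-c\sigma$ each with probability bounded away from $0$. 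Averaging over $\pi_0$ gives $\E X_0=0$, so some choice of $\pi_0$ has $X_0\ge0$ and some has $X_0\le0$. This yields $\disc^+\ge c\sigma$ and $\disc^-\ge c\sigma$ separately, hence $\disc^+\disc^-\ge c^2\sigma^2$. The density powers should track to $p^4(1-p)^4q^4(1-q)^4$ after the degree-case bookkeeping.

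The main obstacle is the lower bound on $\sum_i Y_i^2$. Each $Y_i$ is an inner product between the row difference $g(x_i,\cdot)-g(x_i',\cdot)$ and the row difference of $h$ at the images, over the remaining vertices. Here $Y_i^2$ can be small if $G$ is nearly a union of clones or twins, i.e.\ rows are nearly equal. We would handle this through the dichotomy above. Either many pairs $x,x'$ have row difference of size $\gtrsim p(1-p)n$, with the same for $H$, in which case random alignment gives $\E Y_i^2\gtrsim p(1-p)q(1-q)n$. Or $G$ is close to a blow-up structure with very spread degrees, which puts us in the degree case. The density hypothesis $16/n\le p\le 1-16/n$ is exactly what rules out the degenerate near-empty and near-complete graphs, for which neither alternative holds.
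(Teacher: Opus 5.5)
\textbf{There is a genuine gap.} The proposal fails at the claim $\sum_i Y_i^2\gtrsim p^2(1-p)^2q^2(1-q)^2n^3$ in Case~2, and the failure defeats your symmetric strategy as a whole. Suppose the degree part vanishes, for instance because one graph is regular. Then each $Y_i$, after averaging over the other swaps, is the change produced by one transposition at a uniformly random relabelling. That is a mean-zero random permutation sum of $n-2$ bounded terms, so its second moment is $O(n)$ (compare $\operatorname{tr}(UV)/(N-1)$ in Lemma~\ref{lem:EYphi}). Your own estimate $\E Y_i^2\gtrsim p(1-p)q(1-q)n$, summed over $n/2$ pairs, gives $n^2$, not $n^3$. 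The interaction coefficients $Z_{ij}$ are $O(1)$, so the whole chaos has $\sigma=O(n)$.

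Your anti-concentration step is itself sound: a mean-zero degree-$2$ Rademacher chaos exceeds $\pm c\sigma$ with constant probability. (Average $X_0+c\sigma(\pi_0)$ over $\pi_0$ rather than picking a $\pi_0$ with $X_0\ge 0$, since $\sigma$ depends on $\pi_0$.) But it only yields $\min\{\disc^+,\disc^-\}\gtrsim n$, hence a product of order $n^2$. No bookkeeping can rescue a scheme that bounds $\disc^+$ and $\disc^-$ below by the same quantity. For large $n$ with $4\mid n$, let $G=H$ be two disjoint copies of $K_{n/2}$, so $p=q=\frac{n-2}{2(n-1)}$. If the cliques of the two copies meet in $k$ and $n/2-k$ vertices, the overlap is $k(k-1)+(n/2-k)(n/2-k-1)$. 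This gives $\disc^-(G,H)=\frac{n^2}{8(n-1)}$ but $\disc^+(G,H)=\frac{n^2(n-2)}{8(n-1)}$.

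\textbf{The missing idea} is the asymmetric amplification of Lemma~\ref{lem:amplification}. This is the Bollob\'as--Scott device; the paper quotes the undirected statement and adapts the device to oriented graphs in Section~\ref{sec:amplification}. Let $\gamma$ be the mean absolute effect of one transposition, so $\gamma\asymp n^{1/2}$ up to density factors, and suppose $\disc^+=n\gamma/(100\alpha)$ with $\alpha>1$. Choose a relabelling with $\alpha\langle A,B\rangle+\Delta^+\ge t\gamma/2$, and apply each positively acting disjoint transposition independently with probability $1/\alpha$. The expected overlap is at least $t\gamma/(2\alpha)+A_2/\alpha^2$ but at most $\disc^+$. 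This forces the \emph{signed} sum of interaction coefficients to satisfy $A_2\le-\alpha n\gamma/7$. Evaluating $p_\theta\Delta^++A_2p_\theta^2$ at $p_\theta\in\{1/2,1\}$ then produces a relabelling with overlap $\lesssim-\alpha n\gamma$, so $\disc^+\disc^-\gtrsim n^2\gamma^2\asymp n^3$.

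The point is that $A_2$ is a coherent sum of about $n^2$ terms of size $O(1)$ and can be of order $n^2$. A second-moment or hypercontractivity bound only sees $(\sum\beta_{ij}^2)^{1/2}=O(n)$. In the two-clique example (with the roles of $\disc^\pm$ exchanged) one has $\alpha\asymp\sqrt n$, and this mechanism is exactly what produces $\disc^+\asymp n^2$.

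With the amplification in place, your degree/pair case split is unnecessary. The good-$4$-cycle count bounds $\|R\|_{\mathrm F}^2\gtrsim p^2(1-p)^2n^2$ unconditionally (Lemmas~\ref{thm:four-point-identity} and~\ref{lem:QD>gD}). The degree terms enter the single-swap change only as a $\phi$-independent constant (Lemma~\ref{lem:exact-two-dimensional-swap}), which Proposition~\ref{prop:constant-shift} absorbs.

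The exponent $4$ is also not mere bookkeeping. Through this route you need $\gamma\gtrsim(p(1-p)q(1-q))^2n^{1/2}$. The paper's own $\gamma$ estimate (Lemma~\ref{lem:local-gamma} with the good-$4$-cycle bound) gives only the exponent $6$ of Theorem~\ref{thm:moderate-density}.
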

The same estimate extends to any fixed number of graphs by conditioning on two relabellings and averaging over the others.
\begin{theorem}\label{thm:somegraphs}
Let $r\ge 2$, and let $G_1,\ldots,G_r$ be graphs of order $n$ with $e(G_i)=p_i\binom{n}{2}$, $1\le i\le r$. Suppose that there exist indices $1\le i<j\le r$ such that ${16}/{n}\le p_i,p_j\le 1-{16}/{n}$. Then
\begin{equation*}
\operatorname{disc}_r^+(G_1,\ldots,G_r)\operatorname{disc}_r^-(G_1,\ldots,G_r)\ge\frac{n^3}{10^{20}}\max_{i<j}\left\{\left(\prod_{\ell\ne i,j}p_\ell\right)^2
\bigl(
p_i(1-p_i)p_j(1-p_j)
\bigr)^4
\right\}.
\end{equation*}
\end{theorem}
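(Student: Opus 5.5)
Fix the maximizing pair $i<j$ and write $P=\prod_{\ell\ne i,j}p_\ell$. The plan is to condition on the relabellings $\pi_i,\pi_j$, average over all the others, and then apply Theorem~\ref{thm:jgt2011} to $G_i$ and $G_j$ alone. The key observation is the following. Fix $\pi_i,\pi_j$, and let each $\pi_\ell$ with $\ell\ne i,j$ be an independent uniformly random permutation. Then every pair $e$ of the vertex set lies in $E_{\pi_\ell}(G_\ell)$ with probability exactly $p_\ell$, independently over $\ell$. Hence, by linearity of expectation,
\begin{equation*}
\E\left|\bigcap_{\ell=1}^r E_{\pi_\ell}(G_\ell)\right| = P\,\bigl|E_{\pi_i}(G_i)\cap E_{\pi_j}(G_j)\bigr|.
\end{equation*}
Since a maximum is at least an average, there is a choice of the remaining permutations whose intersection size is at least this value. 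Likewise, a minimum is at most the average, so some choice gives an intersection size at most this value.

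The next step is to subtract $\binom n2\prod_\ell p_\ell = P\cdot p_ip_j\binom n2$. Choosing $\pi_i,\pi_j$ to attain the two-graph maximum gives
\begin{equation*}
\disc_r^+(G_1,\ldots,G_r)\ge P\,\disc^+(G_i,G_j),
\end{equation*}
and choosing them to attain the two-graph minimum gives
\begin{equation*}
\disc_r^-(G_1,\ldots,G_r)\ge P\,\disc^-(G_i,G_j).
\end{equation*}
Relabelling both $G_i$ and $G_j$ is no more general than relabelling one of them, so these are exactly the two-graph discrepancies. Both right-hand sides are nonnegative, because Theorem~\ref{thm:jgt2011} makes the two-graph product positive and each factor is at least $0$ (the maximum dominates the average, which dominates the minimum). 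We may therefore multiply the two inequalities to get
\begin{equation*}
\disc_r^+\disc_r^-\ge P^2\,\disc^+(G_i,G_j)\,\disc^-(G_i,G_j).
\end{equation*}

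Finally, the density hypothesis on $p_i,p_j$ is exactly the one in Theorem~\ref{thm:jgt2011}, which bounds the product on the right below by $\bigl(p_i(1-p_i)p_j(1-p_j)\bigr)^4n^3/10^{20}$. Since the pair $(i,j)$ was arbitrary among the admissible pairs, we may take the best one, which yields the maximum in the statement.

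One point needs care. The maximum in the statement ranges over all pairs $i<j$, including pairs for which the density condition may fail. For such a pair, Theorem~\ref{thm:jgt2011} does not apply. If $p_i\in\{0,1\}$, the corresponding term is $0$, so that case is harmless. Otherwise, I would restrict the maximum to admissible pairs, or check that the intended reading of the statement is that restriction. Apart from this, the main thing to verify is the independence and uniformity claim in the first step: $\Pr[e\in E_{\pi_\ell}(G_\ell)]=e(G_\ell)/\binom n2$ holds for uniformly random $\pi_\ell$, and the different $\ell$ are independent because the $\pi_\ell$ are drawn independently. Once that is in place, the rest is routine.
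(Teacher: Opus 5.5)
Your proposal is correct and follows the paper's proof. The paper's Lemma~\ref{lem:reduction-to-two-graphs} conditions on $\pi_i,\pi_j$ and averages over the other relabellings to get $\disc_r^{\pm}\ge P\,\disc^{\pm}(G_i,G_j)$ with $P=\prod_{\ell\ne i,j}p_\ell$, then multiplies and applies Theorem~\ref{thm:jgt2011}. Your caveat about the maximum is well taken, since the paper's proof also only maximizes over pairs satisfying the density condition, and you additionally check that both factors are nonnegative before multiplying, which the paper leaves implicit.
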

We next define the analogous discrepancy quantities for several oriented
graphs. Let $r\ge 2$, and let $D_1,\ldots,D_r$ be oriented graphs of
order $n$. For \( 1 \leq i \leq r \), write \( e(D_i) = p_i \binom{n}{2} \), where \( 0 \leq p_i \leq 1 \), and identify all vertex sets with \([n]\). The intersections below are intersections of arc sets. Define
\begin{equation*}
\operatorname{disc}_r^+(D_1,\ldots,D_r)=
\max_{\boldsymbol{\pi}\in S_n^r}
\left|\bigcap_{i=1}^r E_{\pi_i}(D_i)\right|-\frac{1}{2^{r-1}}\binom{n}{2}\prod_{i=1}^r p_i,
\end{equation*}
and
\begin{equation*}
	\operatorname{disc}_r^-(D_1,\ldots,D_r)=\frac{1}{2^{r-1}}\binom{n}{2}\prod_{i=1}^r p_i-\min_{\boldsymbol{\pi}\in S_n^r}\left|\bigcap_{i=1}^r E_{\pi_i}(D_i)\right|.
\end{equation*}
For $r=2$, these definitions agree with (\ref{eq:disc-plus-def}) and (\ref{eq:disc-minus-def}). We obtain the following extensions.
\begin{theorem}\label{thm:sometournaments}
	Let $r\ge 2$ and $n\geq2$, and let $T_1,\ldots,T_r$ be tournaments of order $n$. Then
	\[
	\operatorname{disc}_r^+(T_1,\ldots,T_r)
	\operatorname{disc}_r^-(T_1,\ldots,T_r)
	\ge
	\frac{c_1}{4^{r-2}}\,n^3,
	\]
	where $c_1>0$ is the absolute constant in Theorem~\ref{thm:tournament-intro}.
\end{theorem}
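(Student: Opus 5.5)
We reduce to Theorem~\ref{thm:tournament-intro} by fixing two relabellings and averaging over the remaining ones, as the paper indicates for the graph case. Fix $\pi_1,\pi_2$, and let $\pi_3,\ldots,\pi_r$ be independent uniformly random permutations. Each tournament $T_\ell$ with $\ell\ge3$ has $\binom n2$ arcs, one for each unordered pair. After a uniformly random relabelling, a given arc $(x,y)$ lies in $E_{\pi_\ell}(T_\ell)$ with probability exactly $1/2$, since the image of the pair $\{x,y\}$ is a uniform pair and its orientation is equally likely either way. The events for different $\ell$ are independent. Hence for every fixed $\pi_1,\pi_2$,
\[
\E_{\pi_3,\ldots,\pi_r}\left|\bigcap_{i=1}^r E_{\pi_i}(T_i)\right|=\frac{1}{2^{r-2}}\,\bigl|E_{\pi_1}(T_1)\cap E_{\pi_2}(T_2)\bigr|.
\]
Some choice of $\pi_3,\ldots,\pi_r$ gives an intersection at least this expectation, and some choice gives at most it.

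Apply this with $\pi_1=\mathrm{id}$ and $\pi_2$ a maximiser of $|E(T_1)\cap E_{\pi_2}(T_2)|$. The main term for $r$ tournaments is $\frac{1}{2^{r-1}}\binom n2$, since all $p_i=1$, and this equals $2^{-(r-2)}$ times the two-tournament main term $\frac12\binom n2$. Therefore
\[
\disc_r^+(T_1,\ldots,T_r)\ge \frac{1}{2^{r-2}}\disc^+(T_1,T_2).
\]
Symmetrically, taking $\pi_2$ to be a minimiser and then a choice of the other permutations at most the average gives
\[
\disc_r^-(T_1,\ldots,T_r)\ge 2^{-(r-2)}\disc^-(T_1,T_2).
\]
Both quantities $\disc^\pm(T_1,T_2)$ are nonnegative, since by averaging over $\pi$ the maximum is at least the mean and the minimum at most it. So we may multiply the two inequalities and apply Theorem~\ref{thm:tournament-intro}:
\[
\disc_r^+\disc_r^-\ge 4^{-(r-2)}\disc^+(T_1,T_2)\disc^-(T_1,T_2)\ge \frac{c_1}{4^{r-2}}n^3 .
\]

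The only step needing care is the exact probability $1/2$ for a fixed arc under a random relabelling of a tournament, together with independence across $\ell$. The first follows because the relabelling acts transitively on ordered pairs and a tournament contains exactly one of $(u,v)$ and $(v,u)$. The second holds because the $\pi_\ell$ are independent. We must also handle the trivial case $r=2$, where the bound is Theorem~\ref{thm:tournament-intro} itself, and note that relabelling all graphs simultaneously does not change intersection sizes. This justifies fixing $\pi_1=\mathrm{id}$, though nothing actually requires it.
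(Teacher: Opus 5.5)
Your proposal is correct and follows essentially the same route as the paper. The paper proves a general conditioning lemma (Lemma~\ref{lem:reduction-to-two-graphs}, with $\kappa=2$ and all $p_\ell=1$) that fixes two relabellings and averages over the rest, giving $\operatorname{disc}_r^{\pm}\ge 2^{-(r-2)}\operatorname{disc}^{\pm}(T_i,T_j)$; it then multiplies the two bounds and applies Theorem~\ref{thm:tournament-intro}. Your explicit check that $\operatorname{disc}^{\pm}(T_1,T_2)\ge 0$, which justifies multiplying the two inequalities, is a point the paper leaves implicit.
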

\begin{theorem}\label{thm:someoriented}
	Let $r\ge 2$, and let $D_1,\ldots,D_r$ be oriented graphs of order $n$
	with $e(D_i)=p_i\binom{n}{2}$, $1\le i\le r$. Suppose that there exist $1\le i<j\le r$ such that ${16}/{n}\le p_i,p_j\le 1-{16}/{n}$. Then
	\begin{equation*}
	\operatorname{disc}_r^+(D_1,\ldots,D_r)
	\operatorname{disc}_r^-(D_1,\ldots,D_r)\ge c_2 n^3\max_{i<j}
	\left\{\left(\prod_{\ell\ne i,j}\frac{p_\ell}{2}\right)^2\bigl(p_i(1-p_i)p_j(1-p_j)
	\bigr)^6\right\},
	\end{equation*}
	where $c_2>0$ is the absolute constant in Theorem~\ref{thm:moderate-density}. 
\end{theorem}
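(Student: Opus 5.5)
Fix the pair $i<j$ that attains the maximum, with $16/n\le p_i,p_j\le 1-16/n$. We reduce to Theorem~\ref{thm:moderate-density} for $D_i,D_j$ by conditioning on $\pi_i,\pi_j$ and averaging over the other relabellings.

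Write $m=\binom n2$, $\rho=\prod_{\ell\ne i,j}p_\ell/2$, and $f(\boldsymbol\pi)=|\bigcap_{\ell}E_{\pi_\ell}(D_\ell)|$. Fix $\pi_i,\pi_j$, and let $\pi_\ell$ for $\ell\ne i,j$ be independent and uniformly random. For a fixed arc $a$, the events $a\in E_{\pi_\ell}(D_\ell)$ are independent, and each has probability $p_\ell/2$, since a uniformly relabelled oriented graph with $p_\ell m$ arcs contains any given ordered pair with probability $p_\ell/2$. By linearity,
\begin{equation*}
\E\bigl[f(\boldsymbol\pi)\mid \pi_i,\pi_j\bigr]=\rho\,\bigl|E_{\pi_i}(D_i)\cap E_{\pi_j}(D_j)\bigr|.
\end{equation*}
The two-graph intersection $E_{\pi_i}(D_i)\cap E_{\pi_j}(D_j)$ depends only on $\pi_j^{-1}\pi_i$ up to a common relabelling. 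So we may choose $\pi_i,\pi_j$ to realise the maximum overlap, which is $\frac{p_ip_j}{2}m+\disc^+(D_i,D_j)$. Some choice of the remaining permutations then achieves at least the conditional expectation, so
\begin{equation*}
\max_{\boldsymbol\pi} f\ \ge\ \rho\Bigl(\tfrac{p_ip_j}{2}m+\disc^+(D_i,D_j)\Bigr).
\end{equation*}
The subtracted term in $\disc_r^+$ is $\frac{1}{2^{r-1}}m\prod_\ell p_\ell=\rho\cdot\frac{p_ip_j}{2}m$. Hence $\disc_r^+(D_1,\dots,D_r)\ge\rho\,\disc^+(D_i,D_j)$.

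The same argument with the minimising $\pi_i,\pi_j$ gives $\min f\le \rho\bigl(\frac{p_ip_j}{2}m-\disc^-(D_i,D_j)\bigr)$, so $\disc_r^-\ge\rho\,\disc^-(D_i,D_j)$. Both one-sided discrepancies of $D_i,D_j$ are nonnegative, because a uniformly random relabelling averages exactly to $\frac{p_ip_j}{2}m$. We may therefore multiply the two inequalities:
\begin{equation*}
\disc_r^+\disc_r^-\ \ge\ \rho^2\,\disc^+(D_i,D_j)\disc^-(D_i,D_j)\ \ge\ c_2\rho^2\bigl(p_i(1-p_i)p_j(1-p_j)\bigr)^6n^3,
\end{equation*}
where the last step is Theorem~\ref{thm:moderate-density}. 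This holds for every admissible pair $i<j$, so we may take the maximum.

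There is one subtlety: a pair that fails the density condition could in principle give the largest term in the maximum. We read the maximum as ranging over admissible pairs only. If it is meant over all pairs, we note that an inadmissible pair may have a vanishing product or cannot be handled by this argument. Apart from this reading, no step is a real obstacle; the only points needing care are the independence computation for $\E[f\mid\pi_i,\pi_j]$ and checking that the normalising constant factors exactly as $\rho\cdot\frac{p_ip_j}{2}m$.
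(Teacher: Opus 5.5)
Your proof is correct and is essentially the paper's argument. The paper packages your conditioning step as Lemma~\ref{lem:reduction-to-two-graphs} (with $\kappa=2$, so each one-sided discrepancy picks up the factor $\prod_{\ell\ne i,j}p_\ell/2$), multiplies the two inequalities, applies Theorem~\ref{thm:moderate-density}, and, like you, takes the maximum only over pairs satisfying the density condition; your explicit check that both one-sided discrepancies are nonnegative before multiplying is a detail the paper leaves implicit.
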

The paper is organized as follows. Section~\ref{sec:normalization} introduces the centered matrix formulation and proves the random permutation-sum estimate used later. Section~\ref{sec:amplification} develops the transposition-amplification argument. Section~\ref{sec:local-identities} gives the orthogonal structural decomposition and the exact four-vertex support and triangle-circulation identities. Section~\ref{sec:structure-energy} derives the row-difference formula and the bound for discrepancy. Section~\ref{sec:mainresults} completes the proofs of Theorems~\ref{thm:tournament-intro} and \ref{thm:moderate-density}. Section~\ref{sec:somegraphs} uses a conditioning argument to reduce the discrepancy of several graphs to that of two graphs and proves Theorems~\ref{thm:somegraphs}--\ref{thm:someoriented}.
\section{Definitions and auxiliary tools}\label{sec:normalization}
We will assume that $D$ and $H$ are oriented graphs of order $n$ with vertex set $V=[n]$. We define an action of the symmetric group $S_n$ on $D$ by 
\begin{equation*}
D_\pi=(V,E_\pi(D)),
\end{equation*}
 where 
 \begin{equation*}
  E_\pi(D)=\{i\to j:\pi(i)\to \pi(j)\in E(D)\}.
 \end{equation*}
 We identify \( D \) with the matrix $A=(A_{ij})_{n\times n}$ with entries
 \begin{equation}\label{eq:Aij}
 	A_{ij} =
 	\begin{cases} 
 		\mathbf{1}_{\{i \to j \in E(D)\}} - p/2, & i \neq j, \\
 		0, & i = j,
 	\end{cases}
 \end{equation}
where \( \mathbf{1}_{\{i\to j\in E(D)\}} = 1 \) if \( i\to j \in E(D) \) and $0$ otherwise. Similarly, we identify $D_\pi$ with the corresponding matrix
$A_\pi$. For an oriented graph $H$ with corresponding matrix $B=(B_{ij})_{n\times n}$, we have
\begin{align}\label{eq:<B_pi,C>}
 \langle A_{\pi},B\rangle&=\sum_{i\ne j}A_{\pi(i)\pi(j)}B_{ij}\notag\\&=\sum_{i\ne j}\left(\textbf{1}_{\{\pi(i) \to \pi(j) \in E(D)\}} - \frac{p}{2}\right)\left(\textbf{1}_{\{i \to j \in E(H)\}} - \frac{q}{2}\right)\notag\\&=\sum_{i \neq j} \left(\textbf{1}_{\{i \to j \in E_\pi(D)\}} \textbf{1}_{\{i \to j \in E(H)\}}\right)-\frac{pq}{2}\binom n2\notag \\&=|E_\pi(D) \cap E(H)|-\frac{pq}{2}\binom n2,
\end{align}
Let $J$ denote the all-ones matrix. Then
\begin{equation}\label{eq:<A,I>}
	\langle A_{\pi},J\rangle=\langle A,J\rangle=\sum_{i\ne j}A_{ij}=\sum_{i\ne j}\left(\textbf{1}_{\{i \to j \in E(D)\}}\right) - p\binom{n}{2}=0.
\end{equation}
\begin{observation}\label{lem:random-center}
	If $\pi$ is chosen uniformly at random from $S_n$, then $\E_\pi\langle A_{\pi},B\rangle=0$.
\end{observation}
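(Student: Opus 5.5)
The plan is to use linearity of expectation together with the fact that, for a uniformly random $\pi\in S_n$, the ordered pair $(\pi(i),\pi(j))$ is uniform over all ordered pairs of distinct vertices. I will take the definition of $\langle A_\pi,B\rangle$ from \eqref{eq:<B_pi,C>}, namely $\sum_{i\ne j}A_{\pi(i)\pi(j)}B_{ij}$, and exchange expectation and summation. This gives
\begin{equation*}
\E_\pi\langle A_\pi,B\rangle=\sum_{i\ne j}B_{ij}\,\E_\pi A_{\pi(i)\pi(j)} .
\end{equation*}

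Next I fix $i\ne j$. Since $\pi$ is uniform, the pair $(\pi(i),\pi(j))$ takes each value $(k,\ell)$ with $k\ne \ell$ with probability $1/(n(n-1))$. Therefore
\begin{equation*}
\E_\pi A_{\pi(i)\pi(j)}=\frac{1}{n(n-1)}\sum_{k\ne \ell}A_{k\ell}=\frac{1}{n(n-1)}\langle A,J\rangle ,
\end{equation*}
using that the diagonal of $A$ vanishes. By \eqref{eq:<A,I>} this equals $0$. The point is that the $p\binom n2$ arcs of $D$ contribute $1$ each, while the subtraction of $p/2$ over all $n(n-1)=2\binom n2$ ordered pairs contributes $-p\binom n2$, and these cancel.

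Every summand is therefore zero, so the expectation is $0$. In combinatorial terms, the expected size of $E_\pi(D)\cap E(H)$ is $\frac{pq}{2}\binom n2$, as asserted in the introduction.

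There is no real obstacle here. The only care needed is to note that $A_{\pi(i)\pi(j)}$ is never a diagonal entry when $i\ne j$, and that the normalization uses $n(n-1)$ ordered pairs, which is exactly why the centering constant is $p/2$ rather than $p$.
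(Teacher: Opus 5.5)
Your proof is correct and is essentially the paper's argument: linearity of expectation, the uniformity of $(\pi(i),\pi(j))$ over the $n(n-1)$ ordered pairs of distinct vertices, and $\langle A,J\rangle=0$. Your remark that this centering is what makes the random average overlap equal $\frac{pq}{2}\binom n2$ is also right.
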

\begin{proof}[\bf Proof]
	For fixed $i\ne j$, the ordered pair $(\pi(i),\pi(j))$ is uniform over all $n(n-1)$ ordered pairs of distinct vertices. By (\ref{eq:<A,I>}), we have
	\begin{equation*}
		\E_\pi \langle A_\pi,B\rangle
		=\sum_{i\ne j}B_{ij}\,\E_\pi A_{\pi(i)\pi(j)}=\frac{1}{n(n-1)}\sum_{i\ne j}B_{ij}\sum_{x\ne y}A_{xy}=0.
	\end{equation*}
	This completes the proof.
\end{proof}
 Combining (\ref{eq:disc-plus-def}), (\ref{eq:disc-minus-def}) and (\ref{eq:<B_pi,C>}), we may write the positive discrepancy of $D$ with respect to $H$ by
 \begin{equation*}
 \disc^+(D,H)=\max_\pi \langle A_{\pi},B\rangle
 \end{equation*} 
 and the negative discrepancy of $D$ with respect to $H$
\begin{equation*}
\disc^-(D,H)=-\min_\pi \langle A_{\pi},B\rangle.
\end{equation*}

\begin{lemma}\label{lem:EYphi}
	Let $N,d\in\mathbb{N}$ with $N\ge 2$, and let $L>0$. Suppose that $\textbf{u}_1,\ldots,\textbf{u}_N,\textbf{v}_1,\ldots,\textbf{v}_N\in\mathbb R^d$ satisfy $\sum_i \textbf{u}_i=\sum_j \textbf{v}_j=\textbf{0}$ and $\|\textbf{u}_i\|,\|\textbf{v}_j\|\le L$. Set $U=\sum_i \textbf{u}_i \textbf{u}_i^{\mathsf T}$ and $V=\sum_j \textbf{v}_j \textbf{v}_j^{\mathsf T}$. If $\phi$ is a uniformly random permutation of $[N]$ and $Y=\sum_{i} \textbf{u}_i^{\mathsf{T}} \textbf{v}_{\phi(i)}$, then there is an absolute constant $c>0$ such that
	\[
	\mathbb E_{\phi}|Y|
	\ge
	c\,\frac{\operatorname{tr}(UV)^{3/2}}{L^4N^{5/2}}.
	\]
\end{lemma}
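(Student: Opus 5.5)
We will use the standard second-moment/fourth-moment (Paley–Zygmund type) route: by Hölder, $\E Y^2 \le (\E|Y|)^{2/3}(\E Y^4)^{1/3}$, so
\[
\E|Y|\ \ge\ \frac{(\E Y^2)^{3/2}}{(\E Y^4)^{1/2}} .
\]
The task is therefore to compute $\E Y^2$ exactly (showing it is of order $\operatorname{tr}(UV)/N$) and to bound $\E Y^4$ from above by $O(L^8 N^2)$ or a comparable quantity. Then $\E|Y|\gtrsim (\operatorname{tr}(UV)/N)^{3/2}/(L^4N)$, which is exactly the claimed bound $\operatorname{tr}(UV)^{3/2}/(L^4N^{5/2})$.

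\textbf{Second moment.} Write $M_{ij}=\textbf{u}_i^{\mathsf T}\textbf{v}_j$, an $N\times N$ matrix whose row and column sums vanish because $\sum_i\textbf{u}_i=\sum_j\textbf{v}_j=\textbf{0}$. Then $Y=\sum_i M_{i\phi(i)}$ is a Hoeffding permutation statistic with doubly centered matrix. The classical computation gives
\[
\E Y^2=\frac{1}{N-1}\sum_{i,j}M_{ij}^2 .
\]
To verify it, expand $\E\sum_{i,k}M_{i\phi(i)}M_{k\phi(k)}$. The diagonal terms contribute $\frac1N\sum M_{ij}^2$. The off-diagonal terms contribute $\frac{1}{N(N-1)}\sum_{i\ne k,\,j\ne l}M_{ij}M_{kl}$, and using the zero row and column sums this equals $\frac{1}{N(N-1)}\sum M_{ij}^2$. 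Finally,
\[
\sum_{i,j}M_{ij}^2=\sum_{i,j}\textbf{u}_i^{\mathsf T}\textbf{v}_j\textbf{v}_j^{\mathsf T}\textbf{u}_i=\operatorname{tr}(UV),
\]
so $\E Y^2\ge \operatorname{tr}(UV)/N$.

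\textbf{Fourth moment (main obstacle).} Here we need $\E Y^4\le C\,L^8N^2$. Each entry satisfies $|M_{ij}|\le L^2$, so for a sum of $N$ bounded, nearly independent centered terms one expects $\E Y^4=O\bigl((\sum_i \text{var})^2\bigr)=O(L^8N^2)$. The plan is to expand $\E Y^4=\sum_{i_1,\dots,i_4}\E\prod_t M_{i_t\phi(i_t)}$ and classify the terms by the pattern of equalities among $i_1,\dots,i_4$.
\begin{itemize}[label={}]
\item For all-distinct indices, the joint law of $(\phi(i_1),\dots,\phi(i_4))$ is uniform over injective $4$-tuples. Using the zero row and column sums, the sum over injective column tuples can be rewritten via inclusion–exclusion as a sum over tuples with coincidences; this gains the factor of $N$ that centering provides.
\item Each pattern then contributes at most $O(L^8N^2)$ after bounding the sums of products of entries by $L^8$ times the number of free indices, with the $1/(N)_k$ normalisation.
\end{itemize}
Alternatively, one can cite the known fourth-moment bound for combinatorial central limit statistics, $\E Y^4\le C\bigl((\E Y^2)^2+\frac1N\sum M_{ij}^4\bigr)$. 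Since $\E Y^2\le \operatorname{tr}(UV)/(N-1)\le NL^4\cdot\frac{N}{N-1}$ (because $\sum M_{ij}^2\le N^2L^4$) and $\frac1N\sum M_{ij}^4\le NL^8$, this gives $\E Y^4\le C'L^8N^2$. This bookkeeping of coincidence patterns is the step requiring the most care; the cases $N$ small (say $N<4$) are handled directly, since there $\E|Y|\ge$ something trivially comparable, or by adjusting the constant $c$.

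\textbf{Conclusion.} Combining the two moment bounds,
\[
\E|Y|\ge\frac{(\operatorname{tr}(UV)/N)^{3/2}}{\sqrt{C'}\,L^4N}=c\,\frac{\operatorname{tr}(UV)^{3/2}}{L^4N^{5/2}} .
\]
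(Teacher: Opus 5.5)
Your proposal is correct in architecture and differs from the paper only in how the fourth moment is bounded. As written, though, that step is a plan rather than a proof, and it needs the precise fact below to close.

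\textbf{Shared steps and the paper's route.} Your exact identity $\E Y^2=\operatorname{tr}(UV)/(N-1)$ is correct; your derivation via the doubly centered matrix $M_{ij}=\mathbf{u}_i^{\mathsf T}\mathbf{v}_j$ matches the paper. The H\"older step is also the same. The paper never expands $\E Y^4$. It applies Azuma--Hoeffding to the Doob martingale $k\mapsto\E(Y\mid\phi(1),\dots,\phi(k))$, whose increments are at most $\|\mathbf{v}_a-\mathbf{v}_b\|\,\bigl\|\mathbf{u}_k-\frac{1}{N-k}\sum_{i>k}\mathbf{u}_i\bigr\|\le 4L^2$. This gives $\Pr(|Y|\ge t)\le 2e^{-t^2/(32L^4N)}$, and integrating yields $\E Y^4\le 4096L^8N^2$.

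\textbf{Your fourth-moment step.} Your direct expansion also works, but its one quantitative claim is too weak. For four distinct rows the naive count is about $N^4L^8$, so centering must save $N^2$, not ``a factor of $N$''. The fact that closes it is as follows.
\begin{enumerate}
\item Fix a row pattern with $m$ distinct rows $i_B$ of multiplicities $|B|$. Expand the injective column sum by M\"obius inversion over partitions $\beta$ of these $m$ blocks.
\item Each block $C$ of $\beta$ contributes the factor $\sum_j\prod_{B\in C}M_{i_Bj}^{|B|}$. If $C$ is a single row of multiplicity one, this factor is $\sum_j M_{i_Bj}=0$.
\item Hence every surviving block of $\beta$ has total multiplicity at least $2$. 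So $|\beta|\le 2$, and the injective sum is $O(N^2L^8)$.
\item There are exactly $(N)_m$ row tuples of that pattern, each with weight $1/(N)_m$. So every pattern contributes $O(N^2L^8)$.
\end{enumerate}
This uses only $\sum_j\mathbf{v}_j=\mathbf{0}$ and holds for all $N\ge 2$, so your small-$N$ caveat is unnecessary. The Rosenthal-type bound you offer as an alternative is true; the same expansion, now also using the column sums, yields it. But you should not invoke it without either this computation or a precise reference.

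\textbf{What each route buys.} The martingale argument is shorter, needs no case analysis, and gives sub-Gaussian tails rather than a single moment, but it only sees $L$ and $N$. The moment expansion is elementary and self-contained. In its sharp form $\E Y^4\le C\bigl((\E Y^2)^2+N^{-1}\sum_{i,j}M_{ij}^4\bigr)$, combined with $N^{-1}\sum_{i,j}M_{ij}^4\le L^4\,\E Y^2$, it would give the stronger estimate $\E|Y|\ge c\min\{(\E Y^2)^{1/2},\E Y^2/L^2\}$. This implies the lemma, though the paper does not need the extra strength.
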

\begin{proof}[\bf Proof]
		For each $i$, $\phi(i)$ is uniform on $[N]$, hence $\mathbb{E}_\phi \boldsymbol{v}_{\phi(i)}=\boldsymbol{0}$ and $\mathbb{E}_\phi Y=0$. Moreover,
\begin{equation}\label{eq:V/N}
	\mathbb E_\phi\left(\boldsymbol{v}_{\phi(i)}\boldsymbol{v}_{\phi(i)}^{\mathsf T}\right)=\frac{1}{N} \sum_{a=1}^{N} \boldsymbol{v}_a \boldsymbol{v}_a^{\mathsf T}=\frac{V}{N}, 
\end{equation}
and
\begin{equation}\label{eq:0dd}
\left( \sum_a \boldsymbol{v}_a \right) \left( \sum_b \boldsymbol{v}_b \right)^{\mathsf T}=\sum_{a,b} \boldsymbol{v}_a \boldsymbol{v}_b^{\mathsf T} = \sum_a \boldsymbol{v}_a \boldsymbol{v}_a^{\mathsf T} + \sum_{a \neq b} \boldsymbol{v}_a \boldsymbol{v}_b^{\mathsf T}
=V+\sum_{a \neq b} \boldsymbol{v}_a \boldsymbol{v}_b^{\mathsf T} = \mathbf{0}_{d \times d}.
\end{equation}
Similarly, we have
\begin{equation}\label{eq:0ddV}
\left( \sum_a \boldsymbol{u}_a \right)^{\mathsf T} V \left( \sum_b \boldsymbol{u}_b \right) = \sum_{a,b} \boldsymbol{u}_a^{\mathsf T} V \boldsymbol{u}_b = \sum_a \boldsymbol{u}_a^{\mathsf T} V \boldsymbol{u}_a + \sum_{a \neq b} \boldsymbol{u}_a^{\mathsf T} V \boldsymbol{u}_b = 0.
\end{equation}
	For $i\ne k$, the ordered pair $(\phi(i),\phi(k))$ is uniformly
	distributed over all ordered pairs of distinct elements of $[N]$. By (\ref{eq:0dd}), we have
	\begin{equation}\label{eq:-V/NN-1}
	\mathbb E_\phi\left(\boldsymbol{v}_{\phi(i)}\boldsymbol{v}_{\phi(k)}^{\mathsf T}\right)=\frac{1}{N(N-1)}
	\sum_{a\ne b}\boldsymbol{v}_a\boldsymbol{v}_b^{\mathsf T}=-\frac{V}{N(N-1)}.
	\end{equation}
	Combining (\ref{eq:V/N}), (\ref{eq:0ddV}) and (\ref{eq:-V/NN-1}), we obtain
	\begin{align}\label{eq:EY2}
	\mathbb E_\phi Y^2&=\mathbb{E}_{\phi} \left( \sum_{i=1}^{N} \left( \boldsymbol{u}_i^{\mathsf T} \boldsymbol{v}_{\phi(i)} \right)^2 + \sum_{i \neq k} \left( \boldsymbol{u}_i^{\mathsf T} \boldsymbol{v}_{\phi(i)} \right) \left( \boldsymbol{u}_k^{\mathsf T} \boldsymbol{v}_{\phi(k)} \right) \right)\notag\\&=\mathbb{E}_{\phi} \left( \sum_{i=1}^{N} \boldsymbol{u}_i^{\mathsf T} \boldsymbol{v}_{\phi(i)} \boldsymbol{v}_{\phi(i)}^{\mathsf T} \boldsymbol{u}_i + \sum_{i \neq k} \boldsymbol{u}_i^{\mathsf T} \boldsymbol{v}_{\phi(i)} \boldsymbol{v}_{\phi(k)}^{\mathsf T} \boldsymbol{u}_k \right)\notag\\&=\sum_{i=1}^N \boldsymbol{u}_i^{\mathsf T} \mathbb{E}_\phi \left( \boldsymbol{v}_{\phi(i)} \boldsymbol{v}_{\phi(i)}^{\mathsf T} \right) \boldsymbol{u}_i + \sum_{i \neq k} \boldsymbol{u}_i^{\mathsf T} \mathbb{E}_\phi \left( \boldsymbol{v}_{\phi(i)} \boldsymbol{v}_{\phi(k)}^{\mathsf T} \right) \boldsymbol{u}_k\notag\\&=\frac{1}{N} \sum_{i=1}^N \boldsymbol{u}_i^{\mathsf T} V \boldsymbol{u}_i - \frac{1}{N(N-1)} \sum_{i \neq k} \boldsymbol{u}_i^{\mathsf T} V \boldsymbol{u}_k\notag\\&=\frac{1}{N} \sum_{i=1}^N \boldsymbol{u}_i^{\mathsf T} V \boldsymbol{u}_i + \frac{1}{N(N-1)} \sum_{i=1}^N \boldsymbol{u}_i^{\mathsf T} V \boldsymbol{u}_i\notag\\&=\frac{1}{N-1} \sum_{i=1}^N \boldsymbol{u}_i^{\mathsf T} V \boldsymbol{u}_i=\frac{\operatorname{tr}(UV)}{N-1}.
	\end{align}
For $0\le k\le N-1$, define $M_k=\mathbb E_\phi\left(Y\mid\phi(1),\ldots,\phi(k)\right)$ with $M_0=0$ and $M_{N-1}=Y$. Fix \( 1 \leq k \leq N - 1 \), and let \( R = [N] \setminus \{ \phi(1), \ldots, \phi(k-1) \} \). Then \( |R| = N - k + 1 \). For \( i > k \) and \( a \in R \), we have
\begin{equation*}
\mathbb E_\phi\left(\boldsymbol{v}_{\phi(i)}\,\middle|\,\phi(1),\ldots,\phi(k-1),\phi(k)=a\right)=\frac1{N-k}\sum_{c\in R\setminus\{a\}}\boldsymbol{v}_c,
\end{equation*}
and
\begin{equation*}
\mathbb E_\phi
\left(\sum_{i>k}\boldsymbol{u}_i^{\mathsf T} \boldsymbol{v}_{\phi(i)}
\,\middle|\,
\phi(1),\ldots,\phi(k-1),\phi(k)=a
\right)=
\frac1{N-k}
\left(\sum_{i>k}\boldsymbol{u}_i^{\mathsf T}\right)
\left(
\sum_{c\in R\setminus\{a\}}\boldsymbol{v}_c
\right).
\end{equation*}
Conditioning on $\phi(k)=a$, we have
\begin{equation*}
H(a)=\mathbb E_\phi\left(Y\,\middle|\,\phi(1),\ldots,\phi(k-1),\phi(k)=a\right)=\sum_{i<k}\boldsymbol{u}_i^{\mathsf T} \boldsymbol{v}_{\phi(i)}+\boldsymbol{u}_k^{\mathsf T}\boldsymbol{v}_a+\frac{\sum_{i>k}\boldsymbol{u}_i^{\mathsf T}}{N-k}\sum_{c\in R\setminus\{a\}}\boldsymbol{v}_c.
\end{equation*}
Since $\phi(k)$ is uniform on $R$ conditional on $\phi(1),\ldots,\phi(k-1)$, we have
\begin{equation*}
	M_{k-1}=\sum_{a\in R}\Pr\!\left(\phi(k)=a\,\middle|\,\phi(1),\ldots,\phi(k-1)\right)H(a)=
\frac1{|R|}\sum_{a\in R}H(a).
\end{equation*}
For $a,b\in R$,
\begin{equation*}
|H(a)-H(b)|=\left|\boldsymbol{u}_k^{\mathsf T}(\boldsymbol{v}_a-\boldsymbol{v}_b)+\frac{\sum_{i>k}\boldsymbol{u}^{\mathsf T}_i}{N-k}(\boldsymbol{v}_b-\boldsymbol{v}_a)\right|\leq\|\boldsymbol{v}_a-\boldsymbol{v}_b\|\cdot\left|\left|\boldsymbol{u}^{\mathsf T}_k-\frac{\sum_{i>k}\boldsymbol{u}^{\mathsf T}_i}{N-k}\right|\right|\leq 4L^2.
\end{equation*}
Thus, for each $k$ 
\begin{equation*}
|M_k-M_{k-1}|=\left|H(\phi(k))-\frac1{|R|}\sum_{a\in R}H(a)
\right|\le\max_{a,b\in R}|H(a)-H(b)|\le4L^2.
\end{equation*}
By the Azuma--Hoeffding inequality \cite{Azuma1967}, we have $\Pr(|Y|\ge t)\le2\exp\left(-\frac{t^2}{32L^4N}\right)$. Consequently, 
\begin{equation}\label{eq:EY4}
\mathbb E_\phi Y^4=4\int_0^\infty t^3\Pr(|Y|\ge t)\,dt\le8\int_0^\infty t^3
\exp\left(-\frac{t^2}{32L^4N}\right)dt=4096L^8N^2.
\end{equation}
By H\"older's inequality, we have $\mathbb E_\phi Y^2\le(\mathbb E_\phi|Y|)^{2/3}(\mathbb E_\phi |Y|^4)^{1/3}$. If \( \text{tr}(UV) = 0 \), the conclusion is immediate. Otherwise, \( \mathbb{E}_\phi Y^4 > 0 \), and hence
\begin{equation*}
\mathbb E_\phi|Y|\ge
\frac{(\mathbb E_\phi Y^2)^{3/2}}
{(\mathbb E_\phi Y^4)^{1/2}}\ge
\frac{\operatorname{tr}(UV)^{3/2}}
{64L^4N(N-1)^{3/2}}\ge
\frac1{64}\cdot
\frac{\operatorname{tr}(UV)^{3/2}}
{L^4N^{5/2}}.
\end{equation*}
Combining (\ref{eq:EY2}) and (\ref{eq:EY4}) completes the proof of the lemma, with $c=1/64$.
\end{proof}
\begin{proposition}\label{prop:constant-shift}
	If $\E Z=0$, then for every constant $a\in\R$, $\E|Z+a|\ge\E|Z|/2$.
\end{proposition}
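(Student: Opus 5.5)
The plan is to split according to how large $|a|$ is relative to $\E|Z|$, and to use the hypothesis $\E Z=0$ in the form $\E Z^+=\E Z^-=\tfrac12\E|Z|$, where $Z^{\pm}=\max\{\pm Z,0\}$. This identity follows from $\E Z=\E Z^+-\E Z^-=0$ together with $\E|Z|=\E Z^++\E Z^-$. We may assume $\E|Z|<\infty$, since otherwise $\E|Z+a|\ge \E|Z|-|a|=\infty$ and there is nothing to prove.

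\textbf{Case 1:} $|a|\ge \E|Z|/2$. Jensen's inequality (or simply $|\E X|\le \E|X|$) gives $\E|Z+a|\ge |\E(Z+a)|=|a|\ge \E|Z|/2$.

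\textbf{Case 2:} $|a|<\E|Z|/2$. By symmetry (replace $Z$ by $-Z$ and $a$ by $-a$) we may assume $a\ge 0$. Pointwise we have $|Z+a|\ge (Z+a)^+\ge Z^+$. Taking expectations yields $\E|Z+a|\ge \E Z^+=\E|Z|/2$. This bound in fact holds for every $a\ge0$, so Case 1 is not even needed. Thus the whole proof reduces to this single pointwise comparison combined with the identity $\E Z^+=\E Z^-$.

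So the key steps, in order, are: record $\E Z^+=\E Z^-=\E|Z|/2$; reduce to $a\ge 0$ by symmetry; then use $|Z+a|\ge Z^+$ for $a\ge 0$ (respectively $|Z+a|\ge Z^-$ for $a\le 0$). There is no real obstacle here. The only point requiring care is integrability, which is handled at the start. This proposition will later allow us to pass from a centered random quantity, such as $Y$ in Lemma~\ref{lem:EYphi}, to a shifted version without losing more than a factor of $2$ in the lower bound on the first absolute moment.
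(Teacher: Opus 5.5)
Your proof is correct, but it takes a different route from the paper. The paper's argument needs no case split. Jensen gives $|a|=|\E(Z+a)|\le\E|Z+a|$, and taking expectations in the triangle inequality $|Z|\le|Z+a|+|a|$ gives $\E|Z|\le\E|Z+a|+|a|\le 2\E|Z+a|$. There the centering enters only through $\E(Z+a)=a$. You instead use the centering to split $\E|Z|$ evenly as $\E Z^+=\E Z^-=\E|Z|/2$. You then use the pointwise monotonicity $|Z+a|\ge(Z+a)^+\ge Z^+$ for $a\ge0$, and $|Z+a|\ge Z^-$ for $a\le0$. Your Case~1 uses the same Jensen step as the paper, but, as you observe, it is redundant in your argument. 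The paper's version is slightly more economical, since it needs no decomposition into positive and negative parts and no reduction on the sign of $a$. Yours makes explicit why the bound holds: shifting by $a\ge0$ can only enlarge the positive part, which already carries half of $\E|Z|$. Both give the same constant $1/2$, which is sharp. To see this, take $Z=M$ with probability $1/(M+1)$ and $Z=-1$ otherwise, set $a=1$, and let $M\to\infty$. Your integrability preamble is harmless but unnecessary, since the hypothesis $\E Z=0$ already presupposes $\E|Z|<\infty$.
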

\begin{proof}[\bf Proof]
	By Jensen's inequality, we have $|a|=|\E(Z+a)|\le\E|Z+a|$. Also $|Z|\le|Z+a|+|a|$. Taking expectations proves the proposition.
\end{proof}
\section{Bounding in terms of transpositions}\label{sec:amplification}

This section translates a local effect of a single transposition into a global product lower bound for the positive and negative extrema. This adapts the transposition-amplification argument of Bollob\'as and Scott \cite{BollobasScottHypergraphs2015} to oriented graphs.

Fix the transposition $\tau = (xy)$. Let $A_\rho$ and $B_\sigma$ be independent uniformly random relabellings of $A$ and $B$. Define
\begin{equation}\label{eq:gamma-def}
	\gamma(A,B)=
	\E_{\rho,\sigma}\left|\langle (A_{\rho})_{\tau},B_\sigma\rangle-\langle A_\rho,B_\sigma\rangle\right|.
\end{equation}
Let $\tau_i=(x_iy_i)$, $i\in I$, be pairwise disjoint
transpositions. For $J\subseteq I$, write $\tau_J=\prod_{i\in J}\tau_i$.
\begin{lemma}\label{lem:quadratic}
	Let $C,D$ be fixed real matrices with zero diagonal. Choose a random subset $J\subseteq I$
	by including each $i\in I$ independently with probability $p_\theta$. Then
	\begin{equation*}
		\E_J\bigl(\langle C_{\tau_J},D\rangle-\langle C,D\rangle\bigr)
		=p_\theta\sum_{i\in I}\delta_i+p_\theta^2\sum_{i<j}\beta_{ij},
	\end{equation*}
	where $\delta_i=\langle C_{\tau_i},D\rangle-\langle C,D\rangle$ and $\beta_{ij}=\langle C_{\tau_i\tau_j},D\rangle-\langle C_{\tau_i},D\rangle-\langle C_{\tau_j},D\rangle+\langle C,D\rangle$.
\end{lemma}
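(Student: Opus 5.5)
The plan is to expand $\langle C_{\tau_J},D\rangle$ as a sum over ordered pairs $(u,v)$, $u\ne v$, and observe that each term depends on at most two of the transpositions. We need the convention for $C_\pi$: $(C_\pi)_{uv}=C_{\pi(u)\pi(v)}$. The $\tau_i$ have disjoint supports, so for each pair $(u,v)$ the entry $(C_{\tau_J})_{uv}$ depends only on $J\cap I_{uv}$. Here $I_{uv}\subseteq I$ is the set of indices $i$ whose support $\{x_i,y_i\}$ meets $\{u,v\}$. Clearly $|I_{uv}|\le 2$, since $u$ lies in the support of at most one $\tau_i$, and likewise $v$.

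The key step is to write, for each fixed pair $(u,v)$, the function $f_{uv}(J)=(C_{\tau_J})_{uv}D_{uv}$ via inclusion--exclusion. If $I_{uv}=\{i,j\}$, then
\[
f_{uv}(J)=f_{uv}(\emptyset)+\1_{i\in J}\bigl(f_{uv}(\{i\})-f_{uv}(\emptyset)\bigr)+\1_{j\in J}\bigl(f_{uv}(\{j\})-f_{uv}(\emptyset)\bigr)+\1_{i\in J}\1_{j\in J}\bigl(f_{uv}(\{i,j\})-f_{uv}(\{i\})-f_{uv}(\{j\})+f_{uv}(\emptyset)\bigr).
\]
The cases $|I_{uv}|\le1$ have the analogous shorter expansions. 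In every case this is a finite Möbius expansion over subsets of $I_{uv}$, with coefficients given by the local differences.

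Next, note that $f_{uv}(\{i\})=(C_{\tau_i})_{uv}D_{uv}$, and in general $f_{uv}(K)=(C_{\tau_K})_{uv}D_{uv}$ for $K\subseteq I_{uv}$. For any $i$, the term $(C_{\tau_i})_{uv}D_{uv}-C_{uv}D_{uv}$ vanishes whenever $i\notin I_{uv}$. Similarly, the four-term mixed difference for a pair $\{i,j\}$ vanishes unless $\{i,j\}\subseteq I_{uv}$: if, say, $j\notin I_{uv}$, then $\tau_j$ acts trivially on that entry and the difference cancels in pairs.

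Summing over all $(u,v)$ then gives, for every $J$, the exact identity
\[
\langle C_{\tau_J},D\rangle-\langle C,D\rangle=\sum_{i\in J}\delta_i+\sum_{\{i<j\}\subseteq J}\beta_{ij}.
\]
To check this, expand $\delta_i$ and $\beta_{ij}$ as sums over all entries, and use the vanishing observations to extend each local sum to a sum over all of $I$. This also uses that $\tau_K$ for $K\subseteq I_{uv}$ agrees with $\tau_J$ on the entry $(u,v)$ when $K=J\cap I_{uv}$.

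Taking expectations over $J$, and using $\Pr(i\in J)=p_\theta$ and $\Pr(i,j\in J)=p_\theta^2$ for $i\ne j$ by independence, yields the lemma.

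The only real obstacle is bookkeeping: verifying that $(C_{\tau_J})_{uv}$ depends only on $J\cap I_{uv}$, and that no higher-order interaction terms survive. Both follow from disjointness of the transpositions, since an entry involves only two vertices. The zero-diagonal hypothesis is harmless, because relabelling preserves the diagonal.
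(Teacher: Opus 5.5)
Your proposal is correct and follows essentially the same route as the paper: both expand $\langle C_{\tau_J},D\rangle-\langle C,D\rangle$ entrywise and use disjointness of the transpositions to see that each entry depends on at most two of the indicators $\mathbf{1}_{\{i\in J\}}$. This gives an exact multilinear identity of degree two in those indicators, and taking expectations finishes the proof. Your M\"obius-expansion bookkeeping also spells out why the coefficients are exactly $\delta_i$ and $\beta_{ij}$, which the paper's case analysis only asserts.
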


\begin{proof}[\bf Proof]
	For each $i \in I$, let $\xi_i = \mathbf{1}_{\{i \in J\}}$ be the indicator random variable and define $\Delta_J := \langle C_{\tau_J}, D \rangle - \langle C, D \rangle$. Then
	\begin{equation*}
		\Delta_J = \sum_{r \neq s} c_{\tau_J(r), \tau_J(s)} d_{rs} - \sum_{r \neq s} c_{rs} d_{rs} =  \sum_{r \neq s} d_{rs} \big( c_{\tau_J(r), \tau_J(s)} - c_{rs} \big).
	\end{equation*}
	Since the transpositions $\{\tau_i\}_{i \in I}$ are pairwise disjoint, for any fixed ordered pair $(r, s)$, the permuted entry $c_{\tau_J(r), \tau_J(s)}$ depends on at most two indicator variables $\xi_i$. We analyze this dependence by cases:
	\begin{itemize}
		\item If neither $r$ nor $s$ belongs to the support of any $\tau_i$, then $c_{\tau_J(r), \tau_J(s)} - c_{rs} = 0$.
		\item If exactly one endpoint, say $r$, lies in the support of some $\tau_i$, let
		$r'=\tau_i(r)$. Then $c_{\tau_J(r),\tau_J(s)}-c_{rs}=\xi_i(c_{r's}-c_{rs})$.
		\item If $r$ and $s$ both belong to the same transposition $\tau_i$, $s = \tau_i(r)$ and $r = \tau_i(s)$, then $c_{\tau_J(r), \tau_J(s)} - c_{rs} = \xi_i(c_{sr} - c_{rs})$.
		\item If $r$ and $s$ belong to different transpositions, say $\tau_i$ and $\tau_j$ respectively, let $r' = \tau_i(r)$ and $s' = \tau_j(s)$. Then $c_{\tau_J(r), \tau_J(s)} - c_{rs} = \xi_i(c_{r's} - c_{rs}) + \xi_j(c_{rs'} - c_{rs}) + \xi_i\xi_j(c_{r's'} - c_{r's} - c_{rs'} + c_{rs})$.
	\end{itemize}
	Thus
	\[
	\bigl(\langle C_{\tau_J},D\rangle-\langle C,D\rangle\bigr)=\sum_i a_i\xi_i+\sum_{i<j}a_{ij}\xi_i\xi_j,
	\]
	where $a_i=\delta_i$ and
	$a_{ij}=\beta_{ij}$. Taking expectations and using
	$\E\xi_i=p_\theta$ and $\E\xi_i\xi_j=p_\theta^2$ proves the lemma.
\end{proof}

\begin{lemma}\label{lem:amplification}
There is an absolute constant $c_0>0$ such that, for every pair of
oriented graphs $D$ and $H$ of order $n$, with corresponding matrices
$A$ and $B$ defined as in (\ref{eq:Aij}). Then
	\begin{equation*}
		\disc^+(D,H)\disc^-(D,H)\ge c_0 n^2\gamma(A,B)^2,
	\end{equation*}
	where $c_0=10^{-4}$.
\end{lemma}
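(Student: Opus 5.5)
Write $d^+=\disc^+(D,H)$, $d^-=\disc^-(D,H)$ and $\gamma=\gamma(A,B)$. The plan is the Bollob\'as--Scott amplification argument. We first realize the single-transposition effect on many disjoint transpositions simultaneously, then split according to the size of the pairwise interaction terms. Either the linear term produces a large deviation, or the quadratic term does.

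\textbf{Step 1: many disjoint transpositions.} Fix $m=\lfloor n/2\rfloor$ disjoint transpositions $\tau_i=(x_iy_i)$. By symmetry under relabelling, conjugating $\tau$ by a random permutation shows that for random $\rho,\sigma$ each $\tau_i$ has the same distribution of effect. Hence, with $C=A_\rho$, $D=B_\sigma$ and $\delta_i$ as in Lemma~\ref{lem:quadratic}, we have $\E_{\rho,\sigma}|\delta_i|=\gamma$ for every $i$. Set $\epsilon_i=\operatorname{sgn}\delta_i$ and $I^+=\{i:\delta_i>0\}$, $I^-=\{i:\delta_i<0\}$. Then $\E\sum_{i\in I^+}\delta_i-\E\sum_{i\in I^-}\delta_i=m\gamma$.

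We also need control on the baseline. Every value $\langle C_{\tau_J},D\rangle$ lies in $[-d^-,d^+]$. Since $\E\langle C,D\rangle=0$ by Observation~\ref{lem:random-center}, we can also control $\langle C,D\rangle$ itself. The reduction we aim for is therefore: fix a realization with $\Lambda:=\sum_{i\in I^+}\delta_i\ge m\gamma/2$, or the symmetric case for $I^-$. We may do this because $\delta$ changes sign under swapping the roles of $C$ and $C_{\tau_i}$, which has the same law.

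\textbf{Step 2: quadratic expansion restricted to $I^+$.} Apply Lemma~\ref{lem:quadratic} to the index set $I^+$ with inclusion probability $\theta$. This gives
\[f(\theta)=\E_J\Delta_J=\theta\Lambda+\theta^2 Q,\]
where $Q=\sum_{i<j}\beta_{ij}$ and $f(\theta)\in[-d^--\langle C,D\rangle,\ d^+-\langle C,D\rangle]$ for all $\theta\in[0,1]$. A quadratic that takes values in an interval of length $d^++d^-$ on $[0,1]$, with linear coefficient $\Lambda$, must satisfy $\Lambda\le 8(d^++d^-)$; this follows from the Markov inequality for polynomials, or by comparing $\theta=1/2$ and $\theta=1$. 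That estimate alone gives only $d^++d^-\gtrsim n\gamma$, whereas we need a bound on the product.

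\textbf{Step 3: from a sum bound to a product bound.} To get the product we use unbalanced scaling. Assume $d^+\le d^-$, so we want $d^+\gtrsim n^2\gamma^2/d^-$. Choose $\theta=t$ small. Then $f(t)\ge t\Lambda-t^2|Q|$. We bound $|Q|$ by noting that $f(1)\ge-(d^-+|\langle C,D\rangle|)$, and combine this with the value at $\theta=1/2$ to obtain $|Q|\lesssim d^++d^-\lesssim d^-$. Taking $t\asymp\Lambda/d^-$ (which is at most $1$ by Step 2) yields
\[d^+\ \ge\ \langle C,D\rangle+f(t)\ \gtrsim\ \langle C,D\rangle+\Lambda^2/d^-.\]
To remove the baseline term, we pick $\rho,\sigma$ such that simultaneously $\Lambda\ge m\gamma/4$ and $\langle C,D\rangle\ge -\Lambda^2/(Kd^-)$. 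For the second condition we use the symmetric decomposition: replacing $C$ by $C_{\tau_{I^+}}$ reverses the roles of the signs. Alternatively, we average the inequality over $\rho,\sigma$, since $\E\langle C,D\rangle=0$, and use Jensen ($\E\Lambda^2\ge(\E\Lambda)^2$). The second route is cleaner: take expectations of $d^+\ge\langle C,D\rangle+c\Lambda^2/d^-$, which gives
\[d^+\ \ge\ c\,(m\gamma/2)^2/d^-\ \asymp\ n^2\gamma^2/d^-.\]
This is the claimed bound.

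\textbf{Main obstacle.} The delicate step is Step 3. We need a uniform bound on $|Q|$ in terms of $d^++d^-$ that holds pointwise, so that the choice of $t$ is valid for every $\rho,\sigma$ before averaging. We also need to check that $t\le 1$, which follows from the Step 2 bound $\Lambda\le 8(d^++d^-)\le 16d^-$. Tracking the constants through these steps should give $c_0=10^{-4}$.
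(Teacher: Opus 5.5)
Your proof is correct, and in its final form (the averaging route in Step~3) it handles the baseline $\langle A_\rho,B_\sigma\rangle$ differently from the paper.

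\textbf{The paper's route.} It follows Bollob\'as--Scott literally:
\begin{enumerate}
\item It assumes $\disc^+\le\disc^-$ and disposes of the case $\disc^+\ge n\gamma/100$.
\item It writes $\disc^+=n\gamma/(100\alpha)$ with $\alpha>1$ and selects a \emph{single} realization with $\alpha\langle A,B\rangle+\Delta^+\ge\lfloor n/2\rfloor\gamma/2$.
\item It takes $p_\theta=1/\alpha$ and uses only the upper bound $\disc^+$ to force the quadratic coefficient down to $A_2\le-\alpha n\gamma/7$.
\item Via $\theta\in\{1/2,1\}$ and Jensen, it converts this negative curvature into a relabelling with overlap at most $-29\alpha n\gamma/2100$.
\end{enumerate}

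\textbf{Your route.} You use both bounds at once. For \emph{every} $(\rho,\sigma)$, the quadratic $f(\theta)=\theta\Lambda+\theta^2Q$ takes values in a window of width $\disc^++\disc^-$ containing $0$. The identities $\Lambda=4f(1/2)-f(1)$ and $Q=2\bigl(f(1)-2f(1/2)\bigr)$ then give $\Lambda,|Q|\le4(\disc^++\disc^-)\le8\disc^-$. Taking $t=\Lambda/(16\disc^-)\le1/2$ yields the pointwise inequality $\disc^+\ge\langle A_\rho,B_\sigma\rangle+\Lambda^2/(32\disc^-)$. Averaging, with $\E\langle A_\rho,B_\sigma\rangle=0$ and $\E\Lambda^2\ge(\E\Lambda)^2=(m\gamma/2)^2$, gives $\disc^+\disc^-\ge m^2\gamma^2/128\ge n^2\gamma^2/1152$, comfortably above $10^{-4}n^2\gamma^2$.

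\textbf{Comparison.} Your route removes the parameter $\alpha$, the case split, and the delicate choice of one good realization, and it gives a better constant. The paper's route stays closer to the source argument and obtains the lower bound on $\disc^-$ directly from a large downward deviation, rather than from a two-sided window.

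\textbf{Write-up fixes.}
\begin{itemize}
\item Drop the Step~1 leftover about fixing a realization ``or the symmetric case for $I^-$''. Also drop the first alternative in Step~3, which tries to control $\Lambda$ and $\langle A_\rho,B_\sigma\rangle$ simultaneously at one realization. Neither is needed, and neither is justified as written.
\item Justify the reduction to $\disc^+\le\disc^-$ by running the same argument with $I^-$ (equivalently, replacing $B$ by $-B$).
\item Dispose of the case $\disc^-=0$: then every overlap is $0$ because the mean is $0$, so $\gamma=0$.
\end{itemize}
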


\begin{proof}[\bf Proof]
	The assertion is trivial if $\gamma(A, B) = 0$. We may assume without loss of generality that $\operatorname{disc}^+(D, H) \le \operatorname{disc}^-(D, H)$. If $\operatorname{disc}^+(D, H) \ge n\gamma(A,B)/100$, then  we are done. We may therefore assume that
	\begin{equation}\label{eq:disc_alpha_def}
		\operatorname{disc}^+(D, H) = \frac{n\gamma(A,B)}{100\alpha},
	\end{equation}
	for some constant $\alpha > 1$.
	
	Let $t = \lfloor n/2 \rfloor$, and fix $t$ pairwise disjoint transpositions $\tau_1, \ldots, \tau_t$. Let $A_\rho, B_\sigma$ be independent, uniformly random relabellings, and set $\delta_i = \langle (A_{\rho})_{\tau_i}, B_\sigma \rangle - \langle A_\rho, B_\sigma \rangle$. Since applying a fixed transposition preserves the uniform distribution of $A^\rho$, we have $\mathbb{E}_{\rho,\sigma} \delta_i = 0$ and $\mathbb{E}_{\rho,\sigma}|\delta_i| = \gamma(A,B)$.
	
	Let $I^+ = \{i : \delta_i > 0\}$ and $\Delta^+ = \sum_{i=1}^t (\delta_i)^+$. Since $z^+ = (|z| + z)/2$, we have $\mathbb{E}_{\rho,\sigma}\Delta^+ = t\gamma(A,B)/2$. By Observation \ref{lem:random-center}, we have $\mathbb{E}_{\rho,\sigma} \langle A_\rho, B_\sigma \rangle = 0$, and so by linearity of expectation,
	\begin{equation*}
		\mathbb{E}_{\rho,\sigma} \big(\alpha \langle A_\rho, B_\sigma \rangle + \Delta^+ \big)= \frac{t}{2}\gamma(A,B).
	\end{equation*}
	We can therefore choose \(\rho\), \(\sigma\) such that \(\alpha \langle A_{\rho}, B_{\sigma} \rangle + \Delta_{A_{\rho}, B_{\sigma}}(I^+) \geq t \gamma(A,B )/2\). Replacing \(A\) and \(B\) by \(A_{\rho}\) and \(B_{\sigma}\), we may therefore assume that
	\begin{equation}\label{eq:alpha_X_bound}
		\alpha \langle A, B\rangle + \Delta^+ \ge \frac{t}{2}\gamma(A,B).
	\end{equation}
	Now consider the effects of applying $\tau_J$, where $J \subseteq I^+$ is a random subset of $I^+$ formed by including each $i \in I^+$ independently with probability $p_\theta$. Consider the random variable $ \langle A_{\tau_J}, B \rangle$. Lemma \ref{lem:quadratic} tells us that
	\begin{equation*}
		\mathbb{E}_J(\langle A_{\tau_J}, B \rangle - \langle A, B \rangle) = p_\theta\Delta^+ + A_2 p_\theta^2,
	\end{equation*}
	for some real number $A_2$. Taking $p_\theta = 1/\alpha$ and using \eqref{eq:alpha_X_bound}, we obtain
	\begin{equation}\label{eq:XJ_lower_bound}
		\mathbb{E}_J \langle A_{\tau_J}, B \rangle = \langle A, B \rangle + \frac{\Delta^+}{\alpha} + \frac{A_2}{\alpha^2} \ge \frac{t\gamma(A,B)}{2\alpha} + \frac{A_2}{\alpha^2}.
	\end{equation}
	Equation \eqref{eq:disc_alpha_def} implies that $\langle A^{\tau_J}, B \rangle \le n\gamma(A,B)/100\alpha$ for any choice of $J$. Hence, equation \eqref{eq:XJ_lower_bound} implies that
	\begin{equation*}
		\frac{A_2}{\alpha^2} \le \frac{n\gamma(A,B)}{100\alpha} - \frac{t\gamma(A,B)}{2\alpha}.
	\end{equation*}
	Since $t \ge n/3$ for $n \ge 2$, we have $A_2/\alpha^2 \le -n\gamma(A,B)/7\alpha$, which yields
	\begin{equation}\label{eq:A2_upper_bound}
		A_2 \le -\frac{1}{7}\alpha n\gamma(A,B).
	\end{equation}
	Let $q(p_\theta) = p_\theta\Delta^+ + A_2 p_\theta^2$. Since $q(1) - 2q(1/2) = A_2/2$, we must have $\max\{|q(1)|, |q(1/2)|\} \ge |A_2|/6$. Combining this with \eqref{eq:A2_upper_bound}, we get
	\begin{equation}\label{eq:poly_max}
		\max\{|q(1)|, |q(1/2)|\} \ge \frac{\alpha n\gamma(A,B)}{42}.
	\end{equation}
	Choose $p_\theta \in \{1/2, 1\}$ for which \eqref{eq:poly_max} holds. By Jensen's inequality, we have
	\begin{equation*}
		\mathbb{E}_J|\langle A_{\tau_J}, B \rangle - \langle A, B \rangle| \ge |\mathbb{E}_J(\langle A_{\tau_J}, B \rangle - \langle A, B \rangle)| \ge \frac{\alpha n\gamma(A,B)}{42}.
	\end{equation*}
	For every $J$, both $\langle A, B \rangle$ and $\langle A_{\tau_J}, B \rangle$ are at most $\operatorname{disc}^+(D, H)$. Since $\alpha > 1$, we have $1/100\alpha^2 - 1/42 \le 1/100 - 1/42 = -29/2100$. Hence
	\begin{align*}
		\mathbb{E}_J \min\{\langle A_{\tau_J}, B \rangle,\langle A, B \rangle\} &= \mathbb{E}_J \big(\max\{\langle A_{\tau_J}, B \rangle,\langle A, B \rangle\} - |\langle A, B \rangle - \langle A_{\tau_J}, B \rangle| \big) \\
		&\le \operatorname{disc}^+(D, H) - \mathbb{E}_J|\langle A_{\tau_J}, B \rangle - \langle A, B \rangle| \\
		&\le \frac{n\gamma(A,B)}{100\alpha} - \frac{\alpha n\gamma(A,B)}{42} \\
		&= \alpha n \gamma(A,B) \left( \frac{1}{100\alpha^2} - \frac{1}{42} \right)\\&\le -\frac{29}{2100} \alpha n \gamma(A,B).
	\end{align*}
	In particular, there is some $J$ such that one of $\langle A, B \rangle$ and $\langle A_{\tau_J}, B \rangle$ is at most $-29\alpha n\gamma(A,B)/2100$, and so $\operatorname{disc}^-(D, H) \ge 29\alpha n\gamma(A,B)/2100$. By \eqref{eq:disc_alpha_def}, we conclude
	\begin{equation*}
		\operatorname{disc}^+(D, H) \operatorname{disc}^-(D, H) \ge \left(\frac{n\gamma(A,B)}{100\alpha}\right) \left(\frac{29\alpha n\gamma(A,B)}{2100}\right) \ge 10^{-4} n^2 \gamma(A,B)^2.
	\end{equation*}
	This completes the proof.
\end{proof}
\section{Four-vertex support and triangle circulation}\label{sec:local-identities}
For an oriented graph $D$, we define the symmetric and skew-symmetric parts of $A$ by
\begin{equation}\label{eq:SDKD}
S_D=\frac{A+A^{\mathsf T}}2,
\qquad
K_D=\frac{A-A^{\mathsf T}}2.
\end{equation}
Let $r_D=S_D\1$ and $t_D=K_D\1$, where $r_D,t_D\in \mathbb{R}^n$ and $\textbf{1}=(1,1,\ldots,1)^{\mathsf T}\in \mathbb{R}^n$. If $i\ne j$, then we define 
\begin{equation}\label{eq:SD1KD1}
(S_D^{(1)})_{ij}=\frac{r_D(i)+r_D(j)}{n-2},
\qquad
(K_D^{(1)})_{ij}=\frac{t_D(i)-t_D(j)}{n},
\end{equation}
and set $(S_D^{(1)})_{ii}=(K_D^{(1)})_{ii}=0$. Put $R_D=S_D-S_D^{(1)}$, $C_D=K_D-K_D^{(1)}$. Define $S_H^{(1)}$, $K_H^{(1)}$, $R_H$ and $C_H$ analogously.
\begin{lemma}\label{lem:orthogonal-decomposition}
	Let $n\ge3$. Then $A=S_D^{(1)}+K_D^{(1)}+R_D+C_D$ and the four displayed components are pairwise orthogonal. Moreover, for every permutation $\pi$,
	\begin{equation}\label{eq:first-order-correlation}
		\left\langle(S_D^{(1)}+K_D^{(1)})_\pi,
		S_H^{(1)}+K_H^{(1)}
		\right\rangle=
		\frac{2}{n-2}\sum_i r_D(\pi(i))r_H(i)+\frac{2}{n}\sum_i t_D(\pi(i))t_H(i).
	\end{equation}
\end{lemma}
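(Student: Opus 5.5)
The decomposition identity $A=S_D^{(1)}+K_D^{(1)}+R_D+C_D$ is immediate from $A=S_D+K_D$ and the definitions $R_D=S_D-S_D^{(1)}$, $C_D=K_D-K_D^{(1)}$. For orthogonality I will use the splitting of zero-diagonal matrices into symmetric and skew-symmetric ones. $S_D^{(1)}$ and $R_D$ are symmetric, while $K_D^{(1)}$ and $C_D$ are skew-symmetric. Since $\langle X,Y\rangle=0$ whenever $X$ is symmetric and $Y$ is skew-symmetric, four of the six pairs are orthogonal for free. It remains to show $\langle S_D^{(1)},R_D\rangle=0$ and $\langle K_D^{(1)},C_D\rangle=0$.

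\textbf{The symmetric pair.} Compute, for any zero-diagonal symmetric $X$ and any vector $r$,
\[
\sum_{i\ne j}(r(i)+r(j))X_{ij}=2\langle r,X\1\rangle .
\]
So it suffices to show $R_D\1=0$, i.e.\ $S_D^{(1)}\1=r_D$. We have
\[
(S_D^{(1)}\1)(i)=\frac{(n-1)r_D(i)+\sum_{j\ne i}r_D(j)}{n-2}=\frac{(n-2)r_D(i)+\sum_j r_D(j)}{n-2}.
\]
Now $\sum_j r_D(j)=\1^{\mathsf T}S_D\1=\langle A,J\rangle=0$ by (\ref{eq:<A,I>}), which gives $S_D^{(1)}\1=r_D$.

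\textbf{The skew pair.} Similarly, for zero-diagonal skew $Y$,
\[
\sum_{i\ne j}(t(i)-t(j))Y_{ij}=2\langle t,Y\1\rangle .
\]
So I need $K_D^{(1)}\1=t_D$. Here
\[
(K_D^{(1)}\1)(i)=\frac{(n-1)t_D(i)-\sum_{j\ne i}t_D(j)}{n}=t_D(i)-\frac1n\sum_j t_D(j),
\]
and $\sum_j t_D(j)=\1^{\mathsf T}K_D\1=0$ by skew-symmetry. This explains the different normalisations $n-2$ and $n$, and the hypothesis $n\ge3$ is needed only to avoid dividing by zero.

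\textbf{The correlation formula.} Relabelling commutes with taking symmetric and skew parts, so $(S_D^{(1)})_\pi$ is symmetric and $(K_D^{(1)})_\pi$ is skew. Hence the cross terms against $K_H^{(1)}$ and $S_H^{(1)}$ vanish, and it remains to evaluate
\[
\sum_{i\ne j}\frac{(r_D(\pi i)+r_D(\pi j))(r_H(i)+r_H(j))}{(n-2)^2}.
\]
Expanding over all ordered pairs $i\ne j$ gives
\[
2(n-2)\sum_i r_D(\pi i)r_H(i)+2\Bigl(\sum_i r_D(\pi i)\Bigr)\Bigl(\sum_j r_H(j)\Bigr).
\]
The last product is zero by the centring above, which yields $\frac{2}{n-2}\sum_i r_D(\pi i)r_H(i)$. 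The same expansion for the skew part gives
\[
\frac{1}{n^2}\Bigl(2n\sum_i t_D(\pi i)t_H(i)-2\sum t_D\sum t_H\Bigr)=\frac2n\sum_i t_D(\pi i)t_H(i).
\]
The only real obstacle is bookkeeping the diagonal exclusion $i\ne j$ in these expansions. I will handle it by summing over all pairs $(i,j)$ and subtracting the diagonal terms $i=j$, which contribute $4r_D(\pi i)r_H(i)$ and $0$ respectively.
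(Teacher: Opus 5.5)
Your proposal is correct and follows essentially the same route as the paper: you show that $S_D^{(1)}$ and $K_D^{(1)}$ reproduce the row sums $r_D$ and $t_D$, so $R_D$ and $C_D$ have zero row sums; you dispose of four pairs by symmetric/skew orthogonality; and you expand the first-order correlation using $\sum_i r_D(i)=\sum_i t_D(i)=0$ (and likewise for $H$). Writing the two remaining inner products as $2\langle r, X\mathbf{1}\rangle$ and $2\langle t, Y\mathbf{1}\rangle$, and handling $i\ne j$ as all pairs minus the diagonal, is only a more compact packaging of the paper's explicit sums.
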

\begin{proof}[\bf Proof]
	By definition, we have
	\begin{equation*}
		S_D^{(1)}+K_D^{(1)}+R_D+C_D=S_D+K_D=A.
	\end{equation*}
	For each $i$,
	\begin{align*}
		r_D(i)&=\sum_{j\ne i}(S_D)_{ij}=\sum_{j\ne i}\frac{A_{ij}+A_{ji}}2=\frac{1}{2}\sum_{j\neq i}\left(\textbf{1}_{\{i \to j \in E(D)\}}+\textbf{1}_{\{j \to i \in E(D)\}}-p\right)\\&=\frac{d_D^+(i)+d_D^-(i)}2-\frac{p}{2}(n-1)=\frac{1}{2}\left(d_D^+(i)+d_D^-(i)-\frac{2|E(D)|}{n}\right),
	\end{align*}
	and
	\begin{align*}
		t_D(i)&=\sum_{j\ne i}{(K_D)}_{ij}=\sum_{j\ne i}\frac{A_{ij}-A_{ji}}2=\frac{1}{2}\sum_{j\neq i}\left(\textbf{1}_{\{i \to j \in E(D)\}}-\textbf{1}_{\{j \to i \in E(D)\}}\right)=\frac{1}{2}\left(d_D^+(i)-d_D^-(i)\right).
	\end{align*}
Thus, $r_D$ records the deviation of the vertex degrees in the underlying
graph from their average, while $t_D$ records the imbalance between the
out-degrees and in-degrees. By (\ref{eq:<A,I>}), we have
	\begin{equation*}
		\sum_i r_D(i)=\sum_i\sum_{j\ne i}\frac{A_{ij}+A_{ji}}2=\frac{\langle A,J\rangle+\langle A,J\rangle}{2}=0,
	\end{equation*}
	Similarly, we have $\sum_i t_D(i)=\left(\langle A,J\rangle-\langle A,J\rangle\right)/2=0$. Hence
	\[
	\sum_{j\ne i}(S_D^{(1)})_{ij}=\sum_{j\ne i}\frac{r_D(i)+r_D(j)}{n-2}
	=\frac{(n-1)r_D(i)+\sum_{j\ne i}r_D(j)}{n-2}=\frac{(n-2)r_D(i)+\sum_{i}r_D(i)}{n-2}=r_D(i),
	\]
	and similarly
	\[
	\sum_{j\ne i}(K_D^{(1)})_{ij}=\sum_{j\ne i}\frac{t_D(i)-t_D(j)}{n}
	=\frac{(n-1)t_D(i)-\sum_{j\ne i}t_D(j)}{n}=\frac{nt_D(i)-\sum_{i}t_D(i)}{n}=t_D(i).
	\]
	Thus 
	\begin{equation}\label{eq:RD}
		\sum_{j\ne i}(R_D)_{ij}=\sum_{j\ne i}\left((S_D)_{ij}-(S_D^{(1)})_{ij}\right)=\sum_{j\ne i}(S_D)_{ij}-\sum_{j\ne i}(S_D^{(1)})_{ij}=0,
	\end{equation}
	and
	\begin{equation}\label{eq:CD}
		\sum_{j\ne i}(C_D)_{ij}=\sum_{j\ne i}\left((K_D)_{ij}-(K_D^{(1)})_{ij}\right)=\sum_{j\ne i}(K_D)_{ij}-\sum_{j\ne i}(K_D^{(1)})_{ij}=0.
	\end{equation}
	Since symmetric matrices are orthogonal to skew-symmetric matrices, we have $\langle S_D^{(1)},K_D^{(1)}\rangle=\langle S_D^{(1)},C_D\rangle=\langle R_D,K_D^{(1)}\rangle=\langle R_D,C_D\rangle=0$. Since $R_D$ is symmetric, (\ref{eq:RD}) gives
	\begin{align*}
		\langle S_D^{(1)},R_D\rangle&=\frac{1}{n-2}\sum_{i\neq j}\left((r_D(i)+r_D(j))(R_D)_{ij}\right)\\&=\frac{1}{n-2}\sum_{i\neq j}\left(r_D(i)(R_D)_{ij}+r_D(j)(R_D)_{ij}\right)\\&=\frac{1}{n-2}\left(\sum_{i\neq j} r_D(i)(R_D)_{ij}+\sum_{i\neq j}r_D(j)(R_D)_{ji}\right)\\&=\frac{2}{n-2}\sum_i r_D(i)\sum_{j\ne i}(R_D)_{ij}=0.
	\end{align*}
	Similarly $C_D$ is a skew-symmetric, and (\ref{eq:CD}) gives
	\[
	\langle K_D^{(1)},C_D\rangle=\frac{1}{n}\sum_{i\neq j}\left((t_D(i)-t_D(j))(C_D)_{ij}\right)
	=\frac{2}{n}\sum_i t_D(i)\sum_{j\ne i}(C_D)_{ij}=0.
	\]
	This proves pairwise orthogonality. By the definition of the permutation action,
	\[
	\bigl((S_D^{(1)})_\pi\bigr)_{ij}
	=
	\frac{r_D(\pi(i))+r_D(\pi(j))}{n-2}.
	\]
	Since $\pi$ is a permutation and
	$\sum_i r_D(i)=\sum_i r_H(i)=0$, we have
	$\sum_i r_D(\pi(i))=0$. Hence
	\begin{align*}
		\left\langle (S_D^{(1)})_\pi,S_H^{(1)}\right\rangle&=\frac{1}{(n-2)^2}\sum_{i\ne j}(r_D(\pi(i))+r_D(\pi(j)))(r_H(i)+r_H(j))
		\\&=\frac{1}{(n-2)^2}\sum_{i\ne j}\left(r_D(\pi(i))r_H(i)+r_D(\pi(i))r_H(j)+r_D(\pi(j))r_H(i)+r_D(\pi(j))r_H(j)\right)\\&=\frac{1}{(n-2)^2}\left(2(n-1)\sum_{i}r_D(\pi(i))r_H(i)+\sum_{i\ne j}r_D(\pi(i))r_H(j)+\sum_{i\ne j}r_D(\pi(j))r_H(i)\right)\\&=\frac{1}{(n-2)^2}\left(2(n-1)\sum_{i}r_D(\pi(i))r_H(i)+2\sum_i r_D(\pi(i))\sum_{j\ne i}r_H(j)\right)\\&=\frac{1}{(n-2)^2}\left(2(n-1)\sum_{i}r_D(\pi(i))r_H(i)-2\sum_i r_D(\pi(i))r_H(i)\right)\\&=\frac{2}{n-2}\sum_i r_D(\pi(i))r_H(i).
	\end{align*}
	Similarly, we have $\sum_i t_D(\pi(i))=\sum_i t_H(i)=0$. Hence
	\begin{align*}
		\left\langle (K_D^{(1)})_\pi,K_H^{(1)}\right\rangle&=\frac{1}{n^2}\sum_{i\ne j}(t_D(\pi(i))-t_D(\pi(j)))(t_H(i)-t_H(j))\\
		&=\frac{1}{n^2}\sum_{i\ne j}\left(t_D(\pi(i))t_H(i)-t_D(\pi(i))t_H(j)-t_D(\pi(j))t_H(i)+t_D(\pi(j))t_H(j)\right)\\&=\frac{1}{n^2}\left(2(n-1)\sum_{i}t_D(\pi(i))t_H(i)-\sum_{i\ne j}t_D(\pi(i))t_H(j)-\sum_{i\ne j}t_D(\pi(j))t_H(i)\right)\\&=\frac{1}{n^2}\left(2(n-1)\sum_{i}t_D(\pi(i))t_H(i)-2\sum_it_D(\pi(i))\sum_{j\ne i}t_H(j)\right)\\&=\frac{1}{n^2}\left(2(n-1)\sum_{i}t_D(\pi(i))t_H(i)+2\sum_it_D(\pi(i))t_H(i)\right)\\&=\frac{2}{n}\sum_i t_D(\pi(i))t_H(i).
	\end{align*}
	Finally, applying the same permutation to the rows and columns preserves symmetry and skew-symmetry. Hence $(S_D^{(1)})_\pi$ is symmetric and $(K_D^{(1)})_\pi$ is
	skew-symmetric, so $\left\langle (S_D^{(1)})_\pi,K_H^{(1)}\right\rangle=\left\langle (K_D^{(1)})_\pi,S_H^{(1)}\right\rangle=0$. Consequently,
	\[
	\left\langle
	\bigl(S_D^{(1)}+K_D^{(1)}\bigr)_\pi,
	S_H^{(1)}+K_H^{(1)}
	\right\rangle=
	\frac{2}{n-2}\sum_i r_D(\pi(i))r_H(i)+
	\frac{2}{n}\sum_i t_D(\pi(i))t_H(i),
	\]
	as required.
\end{proof}
\subsection{Four-vertex support}\label{subs:four-vertex}
For distinct vertices $i,j$, denote
\begin{equation*}
({q_D})_{ij}=
\begin{cases}
	1, & \text{if } i\to j\in E(D) \text{ or } j\to i\in E(D),\\
	0, & \text{otherwise}.
\end{cases}
\end{equation*}
By definition, $q_D$ is symmetric.

For a four-set $S=\{a,b,c,d\}$, consider its three perfect matchings and let $m_1(S)=(q_D)_{ab}+(q_D)_{cd}$, $m_2(S)=(q_D)_{ac}+(q_D)_{bd}$ and $m_3(S)=(q_D)_{ad}+(q_D)_{bc}$. Thus $m_1(S),m_2(S),m_3(S)$ are the numbers of support edges in the three
perfect matchings of $S$. Define the support of $S$ by
\begin{equation*}
Q(D)=\sum_{S\in\binom{V(D)}{4}}Q_S(D)=\sum_{S\in\binom{V(D)}{4}}\sum_{1\leq i<j\leq3}\left((m_i(S)-m_j(S)\right)^2.
\end{equation*}

\begin{lemma}\label{thm:four-point-identity}
For $n\ge4$, we have $Q(D)=2(n-1)(n-2)\|R_D\|_{\mathrm F}^2$, where $\|R_D\|_{\mathrm F}^2=\sum_{i\neq j} {(R_D)}_{ij}^2$.
\end{lemma}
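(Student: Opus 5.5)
Since $(S_D)_{ij}=\frac{(q_D)_{ij}-p}{2}$ for $i\ne j$, and each difference $m_i(S)-m_j(S)$ is a signed sum of four entries whose coefficients cancel constants, we may replace $q_D$ by $2S_D$ in $Q_S(D)$. So we work with $x_{ij}:=(q_D)_{ij}-p=2(S_D)_{ij}$, symmetric with zero row sums up to the degree part. The next step is to pass from $S_D$ to $R_D$. For $i\ne j$ we have $(S_D^{(1)})_{ij}=(r(i)+r(j))/(n-2)$, which is a sum of a function of $i$ and a function of $j$. In any perfect-matching difference, for example $(x_{ab}+x_{cd})-(x_{ac}+x_{bd})$, each vertex appears once with sign $+$ and once with sign $-$. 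Hence any matrix of the form $f(i)+f(j)$ contributes zero. It follows that $m_i(S)-m_j(S)$ equals the same combination of $2(R_D)$ entries. Writing $y=R_D$, this gives
\[
Q(D)=4\sum_{S}\sum_{k<l}\bigl(y(M_k)-y(M_l)\bigr)^2,
\]
where $y(M)$ denotes the sum of $y$ over the two edges of the matching $M$.

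Now we expand the square. For a fixed $S$, the standard identity $\sum_{k<l}(a_k-a_l)^2=3\sum_k a_k^2-(\sum_k a_k)^2$ applies with $a_k=y(M_k)$. Note that $\sum_k a_k=\sum_{e\subset S}y_e$ is the sum over all six pairs in $S$. Summing over all $S$ reduces everything to a few global sums, each grouped by how two pairs $e,f$ sit relative to each other: $e=f$, $e\cap f$ a single vertex, or $e,f$ disjoint.

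\emph{Equal pairs.} In $3\sum_k a_k^2$ each $y_e^2$ appears once per $S\supset e$, giving coefficient $3\binom{n-2}{2}$. In $(\sum a_k)^2$ it again appears once per $S$, giving coefficient $\binom{n-2}{2}$. The net contribution is $2\binom{n-2}{2}\sum_e y_e^2$.

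\emph{Disjoint pairs.} For $e,f$ disjoint they determine $S$ uniquely and form a matching. The cross term $2y_ey_f$ therefore appears with coefficient $3$ in the first part and $1$ in the second, leaving a net $2\cdot 2y_ey_f$.

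\emph{Pairs sharing a vertex.} These occur only in $(\sum a_k)^2$, with multiplicity $n-3$ choices of the fourth vertex, and carry a negative sign.

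Next we use the zero row sums $\sum_{j}y_{ij}=0$ from (\ref{eq:RD}). This gives
\[
\sum_{j\ne k}y_{ij}y_{ik}=-\sum_j y_{ij}^2,
\]
so the sum over pairs sharing a vertex equals $-\sum_e y_e^2$ up to the ordering factor. Also $\bigl(\sum_e y_e\bigr)^2=0$. Expanding this square, the sum over disjoint pairs equals minus the sum over equal pairs minus the sum over pairs sharing a vertex, so it too is expressed through $\|y\|^2$.

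Collecting these, the result should be a constant times $(n-2)(n-1)\sum_e y_e^2$, possibly after some cancellation such as $\binom{n-2}{2}$ combining with $(n-3)$-type terms. Finally we convert $\sum_{e}y_e^2=\tfrac12\|R_D\|_F^2$ and include the factor $4$ from the rescaling, and check that the result is $2(n-1)(n-2)\|R_D\|_F^2$.

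The main obstacle is the careful bookkeeping of these multiplicities: the ordered versus unordered counts, the factor $2$ in the cross terms, and the $4$ from rescaling. I would verify the final constant on a small example, such as $n=4$ with a single edge, after carrying out the symbolic computation.
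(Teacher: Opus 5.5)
Your argument is correct and takes a different route from the paper. The only step left undone is the final collection of coefficients, which you hedge on but which closes exactly. Write $\Sigma_0=\sum_e y_e^2$ (over unordered pairs), and let $\Sigma_{\mathrm{sh}}$ and $\Sigma_{\mathrm{dj}}$ be the sums of $y_ey_f$ over unordered pairs of distinct edges that share a vertex, respectively are disjoint. Your row-sum identity, summed over the common vertex, gives $\Sigma_{\mathrm{sh}}=-\Sigma_0$. The identity $(\sum_e y_e)^2=0$ gives $\Sigma_0+2\Sigma_{\mathrm{sh}}+2\Sigma_{\mathrm{dj}}=0$, i.e.\ $\Sigma_{\mathrm{dj}}=\Sigma_0/2$. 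With your multiplicities, which are correct,
\[
\sum_S\sum_{k<l}(a_k-a_l)^2=2\binom{n-2}{2}\Sigma_0+4\Sigma_{\mathrm{dj}}-2(n-3)\Sigma_{\mathrm{sh}}=\bigl((n-2)(n-3)+2+2(n-3)\bigr)\Sigma_0=(n-1)(n-2)\Sigma_0 .
\]
Hence $Q(D)=4(n-1)(n-2)\Sigma_0=2(n-1)(n-2)\|R_D\|_{\mathrm F}^2$. Your suggested test case ($n=4$, one arc) gives $Q(D)=2$ and $\|R_D\|_{\mathrm F}^2=1/6$, consistent with this.

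The paper never classifies edge pairs. It sums over all $24$ orderings of each four-set to get $2Q(D)=\sum_{(a,b,c,d)}\bigl((R_D)_{ab}+(R_D)_{cd}-(R_D)_{ac}-(R_D)_{bd}\bigr)^2$ over distinct ordered quadruples. For fixed $(b,c)$ it writes the summand as $(x_a-x_d)^2$ with $x_a=(R_D)_{ab}-(R_D)_{ac}$. It then applies the variance identity $\sum_{a\ne d}(x_a-x_d)^2=2m\sum_a x_a^2-2\bigl(\sum_a x_a\bigr)^2$ (over $m$ indices) twice: first over $(a,d)$ with $m=n-2$, then over $(b,c)$ with $m=n-1$. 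Each time the squared-sum term vanishes by the zero row sums and symmetry of $R_D$.

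That nested-variance computation sidesteps the multiplicity bookkeeping you identified as the obstacle. Your expansion, in turn, is more transparent about which edge configurations contribute, and it yields the extra fact $\Sigma_{\mathrm{dj}}=\Sigma_0/2$. Both proofs use only that $R_D$ is symmetric with zero diagonal and zero row sums.
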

\begin{proof}[\bf Proof]
For $i\ne j$, by (\ref{eq:SDKD}), we have
\begin{equation*}
({q_D})_{ij}=p+2(S_D)_{ij}= p + 2(S_D^{(1)})_{ij} + 2(R_D)_{ij}.
\end{equation*}
Thus, for distinct $a,b,c,d$, we have $ (r_D(a) + r_D(b)) + (r_D(c) + r_D(d)) - (r_D(a) + r_D(c)) - (r_D(b) + r_D(d))=0$. By (\ref{eq:SD1KD1}), we obtain
\begin{equation*}
m_1(S)-m_2(S)=({q_D})_{ab}+({q_D})_{cd}-({q_D})_{ac}-({q_D})_{bd}=2\left((R_D)_{ab}+(R_D)_{cd}-(R_D)_{ac}-(R_D)_{bd}\right),
\end{equation*}
and similarly for the other two differences. 
\begin{equation*}
m_1(S)-m_3(S)=2\left((R_D)_{ab}+(R_D)_{cd}-(R_D)_{ad}-(R_D)_{bc}\right),
\end{equation*}
and
\begin{equation*}
m_2(S)-m_3(S)=2\left((R_D)_{ac}+(R_D)_{bd}-(R_D)_{ad}-(R_D)_{bc}\right).
\end{equation*}
 Let $r_1(S)=(R_D)_{ab}+(R_D)_{cd}$, $r_2(S)=(R_D)_{ac}+(R_D)_{bd}$ and $r_3(S)=(R_D)_{ad}+(R_D)_{bc}$. Hence we have
 \begin{equation*}
Q(D) = \sum_{S \in \binom{V(D)}{4}}Q_S(D)= 4 \sum_{S \in \binom{V(D)}{4}}\sum_{1\leq i<j\leq3}\left((r_i(S)-r_j(S)\right)^2.
 \end{equation*}
For a fixed four-set $S$, summing over all $24$ orderings $(a,b,c,d)$ of its vertices gives
\begin{equation*}
\sum_{\substack{(a,b,c,d)\\ \text{ are ordered}}}
\bigl((R_D)_{ab}+(R_D)_{cd}-(R_D)_{ac}-(R_D)_{bd}\bigr)^2=8\sum_{1\leq i<j\leq3}\left((r_i(S)-r_j(S)\right)^2=2Q_S(D).
\end{equation*}
Therefore
\begin{equation*}
2Q(D)=\sum_{\substack{(a,b,c,d)\\ \text{are distinct}}}\bigl((R_D)_{ab}+(R_D)_{cd}-(R_D)_{ac}-(R_D)_{bd}\bigr)^2.
\end{equation*}
Since $R_D$ is a symmetric matrix, we have $(R_D)_{bd}=(R_D)_{db}$ and $(R_D)_{cd}=(R_D)_{dc}$. Let $x_a=(R_D)_{ab} - (R_D)_{ac}$ and $x_d=(R_D)_{db}-(R_D)_{dc}$. We have
\begin{equation}\label{eq:xaxd}
\sum_{\substack{a, d \notin \{b, c\} \\ a \neq d}}x_a  x_d=\sum_{\substack{a, d \notin \{b, c\}}} x_a x_d - \sum_{a \notin \{b, c\}} x_a^2= \left( \sum_{a \notin \{b, c\}} x_a \right)^2 - \sum_{a \notin \{b, c\}} x_a^2,
\end{equation}
and by (\ref{eq:RD}), we have
\begin{equation}\label{eq:sumxa}
\sum_{a \notin \{b,c\}} x_a=\sum_{a \notin \{b,c\}}\bigl((R_D)_{ab} - (R_D)_{ac}\bigr)=-(R_D)_{cb}+(R_D)_{bc}=0.
\end{equation}
By (\ref{eq:xaxd}) and (\ref{eq:sumxa}), we obtain
\begin{align*}
2Q(D)&=\sum_{\substack{(a,b,c,d)\\ \text{are distinct}}}\bigl((R_D)_{ab}+(R_D)_{cd}-(R_D)_{ac}-(R_D)_{bd}\bigr)^2\\&=\sum_{b \neq c} \sum_{\substack{a, d \notin \{b, c\} \\ a \neq d}} \left(\bigl((R_D)_{ab} - (R_D)_{ac}\bigr) - \bigl((R_D)_{db} - (R_D)_{dc}\bigr) \right)^2\\&=\sum_{b \neq c}\left( 2(n-3)\sum_{\substack{a \notin \{b, c\} }}x_a^2-2\sum_{\substack{a, d \notin \{b, c\} \\ a \neq d}}x_ax_d \right)\\&=\sum_{b \neq c}\left( 2(n-3)\sum_{\substack{a \notin \{b, c\} }}x_a^2-2\left(\left( \sum_{a \notin \{b, c\}} x_a \right)^2 - \sum_{a \notin \{b, c\}} x_a^2\right) \right)\\&=2(n - 2) \sum_{b \neq c} \sum_{\substack{a \notin \{b, c\}}} x_a^2 - 2 \sum_{b \neq c} \left( \sum_{a \notin \{b,c\}} x_a \right)^2\\&=2(n - 2) \sum_{\substack{(a, b, c) \\ \text{all distinct}}} \bigl((R_D)_{ab} - (R_D)_{ac}\bigr)^2\\&=2(n - 2) \sum_a \left( 2(n - 1) \sum_{b \neq a} (R_D)_{ab}^2 - 2 \left( \sum_{b \neq a} (R_D)_{ab} \right)^2 \right)\\&=4(n - 1)(n - 2) \sum_{a} \sum_{b \neq a} (R_D)_{ab}^2\\&=4(n - 1)(n - 2) \|R_D\|_{\mathrm F}^2.
\end{align*}
This completes the proof.
\end{proof}
\subsection{Triangle circulation}
For distinct vertices $i,j$, define
\begin{equation*}
(c_D)_{ij}=
\begin{cases}
	1, & i\to j\in E(D),\\
	-1, & j\to i\in E(D),\\
	0, & \text{otherwise}.
\end{cases}
\end{equation*}
For each three-element set \(\{a, b, c\}\), choose either one of the two cyclic orientations of \(\{a, b, c\}\). Reversing this order changes the sign of \((c_D)_{ab} + (c_D)_{bc} + (c_D)_{ca}\), so its square is independent of the choice. Define
\begin{equation*}
C(D)=\sum_{\{a,b,c\}\in\binom{V(D)}{3}}\bigl((c_D)_{ab}+(c_D)_{bc}+(c_D)_{ca}\bigr)^2.
\end{equation*}
\begin{lemma}\label{thm:curl-identity}
For $n\ge3$, we have $C(D)=2n \|C_D\|^2_{\mathrm F}$, where $\|C_D\|^2_{\mathrm F}=\sum_{i\neq j}(C_D)_{ij}^2$.
\end{lemma}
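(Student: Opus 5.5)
The plan mirrors the proof of Lemma~\ref{thm:four-point-identity}: express the triangle sum through $C_D$ alone, pass to a sum over ordered triples, and expand the square using the row-sum condition (\ref{eq:CD}).

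\textbf{Step 1: reduce to $C_D$.} For $i\ne j$ we have $(c_D)_{ij}=A_{ij}-A_{ji}=2(K_D)_{ij}=2(K_D^{(1)})_{ij}+2(C_D)_{ij}$. By (\ref{eq:SD1KD1}), the first-order part telescopes around any triangle:
\[
(K_D^{(1)})_{ab}+(K_D^{(1)})_{bc}+(K_D^{(1)})_{ca}=\frac{(t_D(a)-t_D(b))+(t_D(b)-t_D(c))+(t_D(c)-t_D(a))}{n}=0 .
\]
Hence
\[
(c_D)_{ab}+(c_D)_{bc}+(c_D)_{ca}=2\bigl((C_D)_{ab}+(C_D)_{bc}+(C_D)_{ca}\bigr).
\]
Since $C_D$ is skew-symmetric, each of the $6$ orderings of $\{a,b,c\}$ gives the same square. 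Therefore
\[
C(D)=\frac{4}{6}\sum_{(a,b,c)\ \text{distinct}}\bigl((C_D)_{ab}+(C_D)_{bc}+(C_D)_{ca}\bigr)^2 .
\]

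\textbf{Step 2: expand the square.} The diagonal terms contribute
\[
\sum_{(a,b,c)\ \text{distinct}}\Bigl((C_D)_{ab}^2+(C_D)_{bc}^2+(C_D)_{ca}^2\Bigr)=3(n-2)\|C_D\|_{\mathrm F}^2 .
\]
There are three types of cross term, for example $2(C_D)_{ab}(C_D)_{bc}$. Fix $a$ and $b$, and sum over $c\notin\{a,b\}$. By (\ref{eq:CD}),
\[
\sum_{c\notin\{a,b\}}(C_D)_{bc}=-(C_D)_{ba}=(C_D)_{ab},
\]
so this cross term contributes $2\sum_{a\ne b}(C_D)_{ab}^2=2\|C_D\|_{\mathrm F}^2$. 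The other two cross terms, $(C_D)_{bc}(C_D)_{ca}$ and $(C_D)_{ca}(C_D)_{ab}$, are handled the same way, using skew-symmetry to move the summed index into the row position. Each again contributes $2\|C_D\|_{\mathrm F}^2$, so the signs come out positive.

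\textbf{Step 3: collect.} The total over ordered triples is $(3(n-2)+6)\|C_D\|_{\mathrm F}^2=3n\|C_D\|_{\mathrm F}^2$. Hence
\[
C(D)=\frac{4}{6}\cdot 3n\,\|C_D\|_{\mathrm F}^2=2n\,\|C_D\|_{\mathrm F}^2 .
\]

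The main obstacle is sign bookkeeping in the cross terms of Step~2. Each term must be rearranged via $(C_D)_{ji}=-(C_D)_{ij}$ into a form where (\ref{eq:CD}) applies, and each application produces a sign that has to be tracked. I would check the first case carefully and then obtain the other two from the cyclic symmetry of the expression $(C_D)_{ab}+(C_D)_{bc}+(C_D)_{ca}$.
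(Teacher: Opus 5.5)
Your proposal is correct and follows essentially the same route as the paper. You kill the $K_D^{(1)}$ part by telescoping around the triangle, pass to ordered triples with the factor $4/6$, and expand the square using skew-symmetry and the zero row sums (\ref{eq:CD}), obtaining $3(n-2)\|C_D\|_{\mathrm F}^2+6\|C_D\|_{\mathrm F}^2=3n\|C_D\|_{\mathrm F}^2$. The only cosmetic difference is in the cross terms: the paper fixes the middle vertex $b$ and factors $\sum_{a,c\ne b}(C_D)_{ab}(C_D)_{bc}$ into a product of two vanishing sums, whereas you fix two vertices and sum over the third.
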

\begin{proof}[\bf Proof]
	For $i\neq j$, by (\ref{eq:SDKD}), we have
	\begin{equation*}
	(c_D)_{ij}=2(K_D)_{ij}=2(K_D^{(1)})_{ij}+2(C_D)_{ij}.
	\end{equation*}
For a three-set $\{a,b,c\}$, by (\ref{eq:SD1KD1}), we have
\begin{equation*}
	(K_D^{(1)})_{ab}+(K_D^{(1)})_{bc}+(K_D^{(1)})_{ca}
	=\frac1n\left((t_D(a)-t_D(b))+(t_D(b)-t_D(c))+(t_D(c)-t_D(a))\right)=0.
\end{equation*}
Thus, we obtain $(c_D)_{ab}+(c_D)_{bc}+(c_D)_{ca}=2((C_D)_{ab}+(C_D)_{bc}+(C_D)_{ca})$. Each three-element set has six orderings, all giving the same squared sum. Summing over all ordered triples of distinct vertices therefore gives
\begin{equation*}
\sum_{\substack{(a,b,c)\\\text{all distinct}}}
\bigl((C_D)_{ab}+(C_D)_{bc}+(C_D)_{ca}\bigr)^2=6\sum_{\{a,b,c\}\in\binom{V(D)}3}
\bigl((C_D)_{ab}+(C_D)_{bc}+(C_D)_{ca}\bigr)^2.
\end{equation*}
Since $C_D$ is skew-symmetric, (\ref{eq:CD}) gives
	\begin{align*}
	&\qquad\sum_{\substack{(a,b,c)\\\text{all distinct}}}
	\bigl((C_D)_{ab}+(C_D)_{bc}+(C_D)_{ca}\bigr)^2\\&=
	3(n-2)\sum_{a\ne b}(C_D)_{ab}^2+6\sum_b
	\left(\sum_{\substack{a,c\ne b\\a\ne c}}(C_D)_{ab}(C_D)_{bc}\right)\\&=3(n-2)\|C_D\|_{\mathrm{F}}^2+	6\sum_b\left(\sum_{a,c \neq b} (C_D)_{ab} (C_D)_{bc} - \sum_{a \neq b} (C_D)_{ab} (C_D)_{ba}\right)\\&=3(n-2)\|C_D\|_{\mathrm{F}}^2+	6\sum_b\left(\left(\sum_{a\ne b}(C_D)_{ab}\right)\left(\sum_{c\ne b}(C_D)_{bc}\right)-
	\sum_{a\ne b}(C_D)_{ab}(C_D)_{ba}\right)\\&=3(n-2)\|C_D\|_{\mathrm{F}}^2+6\sum_b\sum_{a\ne b}(C_D)_{ab}^2=3n\|C_D\|_{\mathrm{F}}^2.
	\end{align*}
Then, we obtain 
\begin{equation*}
C(D)=4 \sum_{S \in \binom{V(D)}{3}} \bigl((C_D)_{ab} + (C_D)_{bc} + (C_D)_{ca}\bigr)^2=2n\|C_D\|_{\mathrm F}^2.
\end{equation*}
This completes the proof.
\end{proof}

\section{Transpositions and discrepancy}\label{sec:structure-energy}

Fix an oriented graph $D$. For an ordered pair $x\ne y$ and a vertex $z\notin\{x,y\}$, define 
\begin{equation} \label{eq:row-difference-vector}
 u^D_{xy}(z)=
 \begin{pmatrix}
 (R_D)_{xz}-(R_D)_{yz}\\[1mm]
(C_D)_{xz}-(C_D)_{yz}+\dfrac{2(C_D)_{xy}}{n-2}
 \end{pmatrix}.
\end{equation}
\begin{lemma}\label{lem:row-vector-bounds}
For every $x\ne y$,
\begin{equation*}
 \sum_{z\notin\{x,y\}}u^D_{xy}(z)=\mathbf{0}.
\end{equation*}
Moreover, if \( n \geq 4 \), then $\|u^D_{xy}(z)\|\le7$ for all pairwise distinct vertices \( x, y, z \).
\end{lemma}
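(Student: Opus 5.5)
The lemma has two parts: a zero-sum identity and a uniform bound. I will prove each coordinate separately, using only the row-sum identities (\ref{eq:RD}) and (\ref{eq:CD}) from the proof of Lemma~\ref{lem:orthogonal-decomposition}, the (skew-)symmetry of $R_D$ and $C_D$, and explicit entrywise bounds on $S_D^{(1)}$ and $K_D^{(1)}$.

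\textbf{Zero sum.} For the first coordinate, (\ref{eq:RD}) gives $\sum_{z\ne x}(R_D)_{xz}=0$. Removing the term $z=y$ yields $\sum_{z\notin\{x,y\}}(R_D)_{xz}=-(R_D)_{xy}$. Symmetrically, $\sum_{z\notin\{x,y\}}(R_D)_{yz}=-(R_D)_{yx}=-(R_D)_{xy}$, since $R_D$ is symmetric. The difference of these two sums is therefore $0$.

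For the second coordinate, the same argument with (\ref{eq:CD}) gives
\[
\sum_{z\notin\{x,y\}}\bigl((C_D)_{xz}-(C_D)_{yz}\bigr)=-(C_D)_{xy}+(C_D)_{yx}=-2(C_D)_{xy},
\]
using skew-symmetry. The correction term $2(C_D)_{xy}/(n-2)$ is summed over $n-2$ values of $z$, so it contributes $+2(C_D)_{xy}$. The total is $0$, which explains the normalisation in (\ref{eq:row-difference-vector}).

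\textbf{Entry bounds.} For the norm bound I will first bound the entries of $R_D$ and $C_D$ in absolute value. By definition $|(S_D)_{ij}|\le 1/2$, since $(S_D)_{ij}=((q_D)_{ij}-p)/2$, and $|(K_D)_{ij}|\le1/2$, since $(K_D)_{ij}=(c_D)_{ij}/2$. From the degree formulas in the proof of Lemma~\ref{lem:orthogonal-decomposition}:
\begin{itemize}
\item $|r_D(i)|\le (n-1)/2$, because $d^+_D(i)+d^-_D(i)$ and its average both lie in $[0,n-1]$;
\item $|t_D(i)|\le (n-1)/2$.
\end{itemize}
Hence $|(S_D^{(1)})_{ij}|\le (n-1)/(n-2)\le 3/2$ for $n\ge4$, and $|(K_D^{(1)})_{ij}|\le (n-1)/n<1$. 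This gives $|(R_D)_{ij}|\le 2$ and $|(C_D)_{ij}|\le 3/2$.

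\textbf{Norm bound.} With these entry bounds:
\begin{itemize}
\item the first coordinate of $u^D_{xy}(z)$ is at most $4$ in absolute value;
\item the second coordinate is at most $3+3/(n-2)\le 9/2$.
\end{itemize}
So $\|u^D_{xy}(z)\|\le\sqrt{16+81/4}<7$.

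The only step needing care is the entrywise bound on $S_D^{(1)}$, where the factor $(n-1)/(n-2)$ requires $n\ge4$. This is where the hypothesis enters. Even the crude estimates above stay well below $7$.
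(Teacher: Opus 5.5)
Your proof is correct and is essentially the paper's argument: the zero sums come from the row-sum identities for $R_D$ and $C_D$ together with (skew-)symmetry, and the norm bound comes from $|(R_D)_{ij}|\le 2$ and $|(C_D)_{ij}|\le 3/2$, which you obtain via $|(S_D^{(1)})_{ij}|\le 3/2$ and $|(K_D^{(1)})_{ij}|<1$. Your explicit estimate $\|u^D_{xy}(z)\|^2\le 16+81/4<49$ spells out arithmetic the paper leaves implicit; note that $n\ge4$ is used again in $3/(n-2)\le 3/2$, not only in the bound on $S_D^{(1)}$.
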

\begin{proof}[\bf Proof]
Since $R_D$ is symmetric, (\ref{eq:RD}) gives
\begin{equation*}
 \sum_{z\notin\{x,y\}}\bigl((R_D)_{xz}-(R_D)_{yz}\bigr)=-(R_D)_{xy}+(R_D)_{yx}=0.
\end{equation*}
Since $C_D$ is skew-symmetric. By (\ref{eq:CD}), we have
\begin{equation*}
 \sum_{z\notin\{x,y\}}\left(\bigl((C_D)_{xz}-(C_D)_{yz}\bigr)+\dfrac{2(C_D)_{xy}}{n-2}\right)=-(C_D)_{xy}+(C_D)_{yx}+2(C_D)_{xy}=0.
\end{equation*}
For the uniform bound, when $i\neq j$ we have $\left|(S_D)_{ij}\right|=\left|(q_D)_{ij}-p\right|/2\leq 1/2$. For $n\geq 4$,
\begin{equation*}
\left|(S^{(1)}_D)_{ij}\right|=\left|\frac{r_D(i)+r_D(j)}{n-2}\right|=\left|\frac{\sum_{k\neq i}(S_D)_{ik}+\sum_{k\neq j}(S_D)_{jk}}{n-2}\right|\leq\frac{\frac{n-1}{2}+\frac{n-1}{2}}{n-2}\leq\frac{3}{2}.
\end{equation*}
Thus, we have $\left|(R_D)_{ij}\right|=\left|(S_D)_{ij}-(S_D^{(1)})_{ij}\right|\leq 2$. Similarly, we have $\left|(K_D)_{ij}\right|\leq1/2$ and 
\begin{equation*}
\left|(K_D^{(1)})_{ij}\right|=\left|\frac{t_D(i)-t_D(j)}{n}\right|=\left|\frac{\sum_{k\neq i}(K_D)_{ik}-\sum_{k\neq j}(K_D)_{jk}}{n}\right|\leq\frac{\frac{n-1}{2}+\frac{n-1}{2}}{n}<1.
\end{equation*}
Thus, we have $|(C_D)_{ij}|=\left|(K_D)_{ij}-(K_D^{(1)})_{ij}\right|\leq 3/2$. Hence
\begin{equation*}
\|u_{xy}^D(z)\|^2 = \left( (R_D)_{xz} - (R_D)_{yz} \right)^2+\left( (C_D)_{xz} - (C_D)_{yz} + \frac{2(C_D)_{xy}}{n-2} \right)^2\leq 49,
\end{equation*}
which proves $\|u^D_{xy}(z)\|\le7$.
\end{proof}
Choose independently a uniformly random ordered pair $(u,v)$ of vertices
of $D$ and a uniformly random ordered pair $(x,y)$ of vertices of $H$. Let $\phi:V(H)\setminus\{x,y\}\longrightarrow V(D)\setminus\{u,v\}$ be a uniformly random bijection. Extend $\phi$ to two bijections $\pi_0^\phi,\pi_1^\phi:V(H)\to V(D)$ by setting
\begin{equation}\label{eq:xuyv}
\pi_0^\phi(x)=u,\quad
\pi_0^\phi(y)=v,\qquad
\pi_1^\phi(x)=v,\quad
\pi_1^\phi(y)=u,
\end{equation}
 while both maps agree with $\phi$ on $V(H)\setminus\{x,y\}$  $(\text{i.e. }\pi_0^\phi(z) = \pi_1^\phi(z) = \phi(z))$. Put $\Delta_\phi=\langle A_{\pi_1^\phi},B\rangle-\langle A_{\pi_0^\phi},B\rangle$.
\begin{lemma}\label{lem:exact-two-dimensional-swap}
There is a constant $c_{uvxy}$ depending only on $u,v,x,y$, and not on $\phi$, such that
\begin{equation*}
\Delta_\phi=c_{uvxy}-2\sum_{z\in V(H)\setminus\{x,y\}}\left(u^H_{xy}(z)\right)^{\mathsf T} u^D_{uv}(\phi(z)).
\end{equation*}
\end{lemma}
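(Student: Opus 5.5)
The plan is to compute $\Delta_\phi$ directly from the definition $\langle A_\pi,B\rangle=\sum_{i\ne j}A_{\pi(i)\pi(j)}B_{ij}$. Then I will decompose $A$ and $B$ according to Lemma~\ref{lem:orthogonal-decomposition}. Every term that is not of the form $\sum_z (u^H_{xy}(z))^{\mathsf T}u^D_{uv}(\phi(z))$ will be shown to depend only on $u,v,x,y$. The key structural fact I will use throughout is that as $z$ ranges over $V(H)\setminus\{x,y\}$, $\phi(z)$ ranges over all of $V(D)\setminus\{u,v\}$. Hence any sum of the form $\sum_z f(\phi(z))$, or $\sum_z g(z)$, is independent of $\phi$.

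\textbf{Step 1: localise the difference.} Since $\pi_0^\phi$ and $\pi_1^\phi$ agree off $\{x,y\}$, all terms with $i,j\notin\{x,y\}$ cancel. The terms with $\{i,j\}=\{x,y\}$ involve only $A_{uv},A_{vu},B_{xy},B_{yx}$, so they go into $c_{uvxy}$. For $i\in\{x,y\}$ and $j=z$, write $w=\phi(z)$. The row contributions combine to
\[
-(A_{uw}-A_{vw})(B_{xz}-B_{yz}),
\]
and the column contributions ($j\in\{x,y\}$, $i=z$) combine to
\[
-(A_{wu}-A_{wv})(B_{zx}-B_{zy}).
\]

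\textbf{Step 2: split into symmetric and skew parts.} Using $A=S_D+K_D$, we have $A_{uw}-A_{vw}=\Delta S+\Delta K$ and $A_{wu}-A_{wv}=\Delta S-\Delta K$, where $\Delta S=(S_D)_{uw}-(S_D)_{vw}$ and $\Delta K=(K_D)_{uw}-(K_D)_{vw}$, and similarly for $B$. Adding the row and column products, the cross terms cancel, leaving
\[
-2\sum_z\bigl[\Delta S_D\,\Delta S_H+\Delta K_D\,\Delta K_H\bigr].
\]

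\textbf{Step 3: remove the first-order parts.} By \eqref{eq:SD1KD1}, $(S_D^{(1)})_{uw}-(S_D^{(1)})_{vw}=(r_D(u)-r_D(v))/(n-2)$ and $(K_D^{(1)})_{uw}-(K_D^{(1)})_{vw}=(t_D(u)-t_D(v))/n$. Both are independent of $w$, and likewise for $H$. So each factor has the form ``$a_z+\alpha$'', where $a_z$ is the $R$ or $C$ row difference and $\alpha$ is a constant depending only on $u,v$ (or $x,y$).

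Now take any two such factor families $a_z+\alpha$ and $b_z+\beta$, with an arbitrary constant shift, and expand
\[
\sum_z(a_z+\alpha)(b_z+\beta)=\sum_z a_zb_z+\alpha\sum_z b_z+\beta\sum_z a_z+(n-2)\alpha\beta .
\]
The last three terms are $\phi$-independent: $\sum_z a_z$ runs over all $w\ne u,v$, and $\sum_z b_z$ does not involve $\phi$. Therefore, up to constants, we may freely replace $\Delta S_D$ by $(R_D)_{uw}-(R_D)_{vw}$. For the skew part we may replace $\Delta K_D$ by $(C_D)_{uw}-(C_D)_{vw}+2(C_D)_{uv}/(n-2)$, and $\Delta K_H$ by the corresponding $H$ expression. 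Adding the extra $2(C)_{uv}/(n-2)$ shifts is again just a constant shift of the kind handled above. The result is exactly $-2\sum_z (u^H_{xy}(z))^{\mathsf T}u^D_{uv}(\phi(z))$ plus a constant.

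\textbf{Main obstacle.} The main obstacle is bookkeeping rather than ideas. I need to get the sign conventions of Step 1 right, that is, which of $\pi_0,\pi_1$ sends $x\mapsto u$, so that the overall factor is $-2$ and not $+2$. I also need to check that the symmetric/skew cross terms genuinely cancel. I will verify Step 1 carefully on a single $z$ before summing.
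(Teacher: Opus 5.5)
Your proof is correct, but it reaches the identity by a somewhat different route from the paper.

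\textbf{The paper's route.} It first uses the pairwise orthogonality of $S_D^{(1)},K_D^{(1)},R_D,C_D$, together with the zero row sums \eqref{eq:RD} and \eqref{eq:CD}, to split $\langle A_\pi,B\rangle$ exactly into a first-order part, an $R$-part and a $C$-part, for every $\pi$. It evaluates the first-order difference globally via \eqref{eq:first-order-correlation} and localises only the $R$- and $C$-parts. It then uses $\sum_{z\notin\{x,y\}}\bigl((C_H)_{xz}-(C_H)_{yz}\bigr)=-2(C_H)_{xy}$ and its $D$-analogue to pass to the shifted second coordinate. The result is an explicit formula for $c_{uvxy}$.

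\textbf{Your route.} You localise $\Delta_\phi$ in terms of $A$ and $B$ before decomposing anything. You separate the symmetric and skew parts by pairing row and column contributions, so the cross terms cancel pointwise rather than through a global orthogonality argument. You then absorb every constant shift at once: both the first-order row differences $(r_D(u)-r_D(v))/(n-2)$, $(t_D(u)-t_D(v))/n$ and the correction $2(C_D)_{uv}/(n-2)$. The single observation doing this is that $\sum_z a_{\phi(z)}$ does not depend on $\phi$, because $\phi$ is a bijection onto $V(D)\setminus\{u,v\}$.

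\textbf{What each buys.} Your version needs neither the orthogonality in Lemma~\ref{lem:orthogonal-decomposition} nor the zero row sums of $R_D,C_D$. It only needs that the first-order row differences are independent of $w$. The price is that $c_{uvxy}$ stays implicit. That is harmless, since Lemma~\ref{lem:local-gamma} only uses that $c_{uvxy}$ is independent of $\phi$ (through Proposition~\ref{prop:constant-shift}).

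Your argument also shows that, for this lemma alone, the particular shift $2(C_D)_{uv}/(n-2)$ is immaterial. Its real role is to make $\sum_{z}u^D_{uv}(z)=\mathbf{0}$ in Lemma~\ref{lem:row-vector-bounds}. That centering is what Lemma~\ref{lem:EYphi} and the identity $\E_\phi Y_\phi=0$ require later.
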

\begin{proof}[\bf Proof]
By Lemma~\ref{lem:orthogonal-decomposition}, we have $A=S_D^{(1)}+K_D^{(1)}+R_D+C_D$ and $B=S_H^{(1)}+K_H^{(1)}+R_H+C_H$. Hence
\begin{align}
	\Delta_\phi&=\langle A_{\pi_1^\phi},B\rangle-\langle A_{\pi_0^\phi},B\rangle \\
	&=\left\langle\left(S_D^{(1)}+K_D^{(1)}+R_D+C_D\right)_{\pi_1^\phi}-
	\left(S_D^{(1)}+K_D^{(1)}+R_D+C_D\right)_{\pi_0^\phi},S_H^{(1)}+K_H^{(1)}+R_H+C_H
	\right\rangle. \notag
\end{align}
Using orthogonality between symmetric and skew-symmetric matrices and the zero row sums in (\ref{eq:RD}) and (\ref{eq:CD}), we obtain 
\begin{align}
\Delta_\phi&=
\left(\left\langle
\left(S_D^{(1)}+K_D^{(1)}\right)_{\pi_1^\phi},
S_H^{(1)}+K_H^{(1)}
\right\rangle-
\left\langle
\left(S_D^{(1)}+K_D^{(1)}\right)_{\pi_0^\phi},
S_H^{(1)}+K_H^{(1)}
\right\rangle\right) \notag\\&+
\left(\left\langle (R_D)_{\pi_1^\phi},R_H\right\rangle-
\left\langle (R_D)_{\pi_0^\phi},R_H\right\rangle\right)+
\left(\left\langle (C_D)_{\pi_1^\phi},C_H\right\rangle-
\left\langle (C_D)_{\pi_0^\phi},C_H\right\rangle\right).
\end{align}
Using~(\ref{eq:first-order-correlation}), the difference of the first-order terms equals
\begin{align*}
	\frac{2}{n-2}
	\sum_{z\in V(H)}
	\left(
	r_D(\pi_1^\phi(z))-r_D(\pi_0^\phi(z))
	\right)r_H(z)+
	\frac2n
	\sum_{z\in V(H)}
	\left(
	t_D(\pi_1^\phi(z))-t_D(\pi_0^\phi(z))
	\right)t_H(z).
\end{align*}
Using (\ref{eq:xuyv}) and the fact that \( \pi_0^\phi(z) = \pi_1^\phi(z) = \phi(z) \) for every \( z \in V(H) \setminus \{x, y\} \), we obtain 
\begin{align}\label{eq:Detla_phi2}
	\Delta_\phi=&
	-\frac{2}{n-2}
	\bigl(r_D(u)-r_D(v)\bigr)
	\bigl(r_H(x)-r_H(y)\bigr)-\frac2n\bigl(t_D(u)-t_D(v)\bigr)
	\bigl(t_H(x)-t_H(y)\bigr) \notag\\&+
	\left(\left\langle (R_D)_{\pi_1^\phi},R_H\right\rangle-
	\left\langle (R_D)_{\pi_0^\phi},R_H\right\rangle
	\right)+\left(\left\langle (C_D)_{\pi_1^\phi},C_H\right\rangle-
	\left\langle (C_D)_{\pi_0^\phi},C_H\right\rangle\right).
\end{align}
We next consider the symmetric residual term. Since $R_D$ is symmetric, the
pair $\{x,y\}$ makes no contribution to the change. For each \( z \in V(H) \setminus \{x, y\} \), put \( w = \phi(z) \). Summing the changes involving \(\{x, z\}\) and \(\{y, z\}\) over all such \(z\), we obtain
\begin{align}\label{eq:R-R}
&\qquad\left\langle (R_D)_{\pi_1^\phi},R_H\right\rangle-
\left\langle (R_D)_{\pi_0^\phi},R_H\right\rangle\notag\\&=2\sum_{z\in V(H)\setminus\{x,y\}} \bigl( (R_D)_{vw}(R_H)_{xz} + (R_D)_{uw}(R_H)_{yz} - (R_D)_{uw}(R_H)_{xz} - (R_D)_{vw}(R_H)_{yz} \bigr)\notag\\&=-2\sum_{z\in V(H)\setminus\{x,y\}}((R_H)_{xz} - (R_H)_{yz})((R_D)_{uw} - (R_D)_{vw})\notag\\&=-2 \sum_{z \in V(H) \setminus \{x, y\}} u_{xy,1}^H(z) u_{uv,1}^D(\phi(z)).
\end{align}
For the skew-symmetric residual term, the contribution of the pair $\{x,y\}$ changes by $-4(C_D)_{uv}(C_H)_{xy}$. For each \( z \in V(H) \setminus \{x, y\} \), put \( w = \phi(z) \). The corresponding contribution associated with $z$ changes by
$-2\bigl((C_H)_{xz}-(C_H)_{yz}\bigr)\bigl((C_D)_{uw}-(C_D)_{vw}\bigr)$. Hence
\begin{align}\label{eq:C-C}
&\qquad\left\langle (C_D)_{\pi_1^\phi},C_H\right\rangle-
\left\langle (C_D)_{\pi_0^\phi},C_H\right\rangle\notag
\\&=
-4(C_D)_{uv}(C_H)_{xy}
-2\sum_{z\in V(H)\setminus\{x,y\}}
\bigl((C_H)_{xz}-(C_H)_{yz}\bigr)\bigl((C_D)_{u\phi(z)}-(C_D)_{v\phi(z)}\bigr).
\end{align}
Since $C_H$ and $C_D$ are skew-symmetric and have zero row sums by (\ref{eq:CD}), we obtain
\begin{equation*}
\sum_{z\in V(H)\setminus\{x,y\}}\bigl((C_H)_{xz}-(C_H)_{yz}\bigr)=-2(C_H)_{xy},\quad \sum_{w\in V(D)\setminus\{u,v\}}\bigl((C_D)_{uw}-(C_D)_{vw}\bigr)=-2(C_D)_{uv}.
\end{equation*}
Then
\begin{align}\label{eq:CD-CD}
&\qquad	\sum_{z\in V(H)\setminus\{x,y\}}
	\bigl((C_H)_{xz}-(C_H)_{yz}\bigr)\bigl((C_D)_{u\phi(z)}-(C_D)_{v\phi(z)}\bigr)\notag\\&=
	\sum_{z\in V(H)\setminus\{x,y\}}
	u^H_{xy,2}(z)u^D_{uv,2}(\phi(z))+\frac{4(C_H)_{xy}(C_D)_{uv}}{n-2}.
\end{align}
Substituting \eqref{eq:CD-CD} into \eqref{eq:C-C}, we obtain
\begin{equation}\label{eq:CD1-CD1}
\langle (C_D)_{\pi_1^\phi},C_H\rangle
-\langle (C_D)_{\pi_0^\phi},C_H\rangle=
-2\sum_{z\in V(H)\setminus\{x,y\}}
u^H_{xy,2}(z)u^D_{uv,2}(\phi(z))-
\frac{4n}{n-2}(C_D)_{uv}(C_H)_{xy}.
\end{equation}
Define
\begin{align*}
c_{uvxy}=&-\frac{2}{n-2}
\bigl(r_D(u)-r_D(v)\bigr)
\bigl(r_H(x)-r_H(y)\bigr)-
\frac2n
\bigl(t_D(u)-t_D(v)\bigr)
\bigl(t_H(x)-t_H(y)\bigr)\\&-
\frac{4n}{n-2}(C_D)_{uv}(C_H)_{xy}.
\end{align*}
Combining \eqref{eq:Detla_phi2}, \eqref{eq:R-R} and \eqref{eq:CD1-CD1}, we obtain
\begin{align*}
	\Delta_\phi
&=c_{uvxy}-2\sum_{z\in V(H)\setminus\{x,y\}}
\left(u^H_{xy,1}(z)u^D_{uv,1}(\phi(z))+u^H_{xy,2}(z)u^D_{uv,2}(\phi(z))
\right)\\&=c_{uvxy}-2\sum_{z\in V(H)\setminus\{x,y\}}
\bigl(u^H_{xy}(z)\bigr)^{\mathsf T}
u^D_{uv}(\phi(z)),
\end{align*}
as required.
\end{proof}
Define 
\begin{equation*}
 \Sigma_D(x,y)=\sum_{z\notin\{x,y\}}u^D_{xy}(z)\left(u^D_{xy}(z)\right)^{\mathsf T}.
\end{equation*}
\begin{lemma}\label{lem:average-row-covariance}
Let $n\ge 4$, and let $(x,y)$ be chosen uniformly at random from the
ordered pairs of distinct vertices of $D$. Then
\begin{equation*}
 \E_{x\ne y}\Sigma_D(x,y)=
\begin{pmatrix}
	\dfrac{2\|R_D\|_{\mathrm F}^2}{n}&0\\[2mm]
	0&\dfrac{2(n-3)\|C_D\|_{\mathrm F}^2}{(n-1)(n-2)}
\end{pmatrix}.
\end{equation*}
\end{lemma}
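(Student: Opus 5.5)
We compute the three entries of $\Sigma_D(x,y)$ separately and average each over the $n(n-1)$ ordered pairs $x\ne y$. The first diagonal entry is $\sum_{z\notin\{x,y\}}\bigl((R_D)_{xz}-(R_D)_{yz}\bigr)^2$. Summing over ordered pairs gives $\sum_{(x,y,z)\ \text{distinct}}\bigl((R_D)_{xz}-(R_D)_{yz}\bigr)^2$. This triple sum already appears in the proof of Lemma~\ref{thm:four-point-identity}, with roles relabelled: grouping by $z$ and using symmetry of $R_D$ and the zero row sums (\ref{eq:RD}), it equals
\[
\sum_z\Bigl(2(n-1)\sum_{x\ne z}(R_D)_{zx}^2-2\Bigl(\sum_{x\ne z}(R_D)_{zx}\Bigr)^2\Bigr)=2(n-1)\|R_D\|_{\mathrm F}^2.
\]
Dividing by $n(n-1)$ gives $2\|R_D\|_{\mathrm F}^2/n$.

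For the second diagonal entry, write $w_z=(C_D)_{xz}-(C_D)_{yz}$ and $\kappa=2(C_D)_{xy}/(n-2)$. We expand
\[
\sum_z(w_z+\kappa)^2=\sum_z w_z^2+2\kappa\sum_z w_z+(n-2)\kappa^2 .
\]
By skew-symmetry and (\ref{eq:CD}), $\sum_{z\notin\{x,y\}}w_z=-2(C_D)_{xy}$, as in the proof of Lemma~\ref{lem:row-vector-bounds}. Hence the last two terms combine to $-8(C_D)_{xy}^2/(n-2)+4(C_D)_{xy}^2/(n-2)=-4(C_D)_{xy}^2/(n-2)$.

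For $\sum_{(x,y,z)}w_z^2$ we use the same row-variance computation as for $R_D$. Grouping by $z$ and using $(C_D)_{xz}=-(C_D)_{zx}$, it equals $\sum_z\bigl(2(n-1)\sum_{x}(C_D)_{zx}^2-2(\sum_x (C_D)_{zx})^2\bigr)=2(n-1)\|C_D\|_{\mathrm F}^2$. Summing the correction over ordered pairs gives $-4\|C_D\|_{\mathrm F}^2/(n-2)$. The total is therefore
\[
\|C_D\|_{\mathrm F}^2\Bigl(2(n-1)-\tfrac{4}{n-2}\Bigr)=\frac{2\bigl((n-1)(n-2)-2\bigr)}{n-2}\|C_D\|_{\mathrm F}^2=\frac{2n(n-3)}{n-2}\|C_D\|_{\mathrm F}^2 .
\]
Dividing by $n(n-1)$ yields $2(n-3)\|C_D\|_{\mathrm F}^2/((n-1)(n-2))$, as claimed.

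The off-diagonal entry is $\sum_z\bigl((R_D)_{xz}-(R_D)_{yz}\bigr)(w_z+\kappa)$. The cleanest way to see that its average vanishes is to use the involution $(x,y)\mapsto(y,x)$. Under this swap the first coordinate of $u^D_{xy}(z)$ changes sign, while the second coordinate also changes sign, since $w_z\mapsto -w_z$ and $(C_D)_{yx}=-(C_D)_{xy}$. So the product is invariant under the swap, and this symmetry does not kill it.

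Instead we expand directly. The $\kappa$ term vanishes after summing over $z$, because $\sum_z\bigl((R_D)_{xz}-(R_D)_{yz}\bigr)=0$. The remaining sum over ordered triples is
\[
\sum_{(x,y,z)}\bigl((R_D)_{xz}-(R_D)_{yz}\bigr)\bigl((C_D)_{xz}-(C_D)_{yz}\bigr).
\]
Grouping by $z$, this equals $\sum_z\bigl(2(n-1)\sum_x (R_D)_{zx}(C_D)_{xz}-2(\sum_x (R_D)_{zx})(\sum_x (C_D)_{xz})\bigr)$. The second part vanishes by (\ref{eq:RD}). The first part is a multiple of $\sum_{x\ne z}(R_D)_{zx}(C_D)_{xz}$, which is $-\langle R_D,C_D\rangle=0$ because a symmetric matrix is orthogonal to a skew-symmetric one. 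So the off-diagonal entry averages to $0$.

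The main obstacle is bookkeeping rather than ideas. One has to get the $\kappa$ correction and its interaction with $\sum_z w_z$ exactly right, since that is what produces the factor $(n-3)/(n-2)$. One also has to make sure the row-sum identities are applied with the correct index order for the skew-symmetric matrix.
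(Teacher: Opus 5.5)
Your proposal is correct and follows essentially the same route as the paper: you expand each entry over ordered triples grouped by $z$, use the zero row sums (\ref{eq:RD}) and (\ref{eq:CD}) to get $2(n-1)\|R_D\|_{\mathrm F}^2$, $2(n-1)\|C_D\|_{\mathrm F}^2$ and the $-4(C_D)_{xy}^2/(n-2)$ correction, and kill the off-diagonal entry via $\langle R_D,C_D\rangle=0$. Your aside on the involution $(x,y)\mapsto(y,x)$ is harmless, since you correctly observe that it leaves the product invariant and then drop it.
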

\begin{proof}[\bf Proof]
 For the $\bigl(\E_{x\ne y}\Sigma_D(x,y)\bigr)_{11}$, (\ref{eq:RD}) gives, 
 \begin{align}\label{eq:E11}
 &\qquad\mathbb E_{x\ne y}
 \sum_{z\notin\{x,y\}}
 \bigl((R_D)_{xz}-(R_D)_{yz}\bigr)^2\notag\\
 &=
 \frac{1}{n(n-1)}
 \sum_z
 \sum_{\substack{x\ne y\\x,y\ne z}}
 \bigl((R_D)_{xz}-(R_D)_{yz}\bigr)^2\notag\\
 &=
 \frac{1}{n(n-1)}
 \sum_z
 \left(
 2(n-2)\sum_{x\ne z}(R_D)_{xz}^2-
 2\sum_{\substack{x\ne y\\x,y\ne z}}
 (R_D)_{xz}(R_D)_{yz}
 \right)\notag\\
 &=
 \frac{1}{n(n-1)}
 \sum_z
 \left(
 2(n-2)\sum_{x\ne z}(R_D)_{xz}^2-
 2\left(\left(\sum_{x\ne z}(R_D)_{xz}\right)^2-\sum_{x\ne z}(R_D)_{xz}^2\right)\right)\notag\\&=
 \frac{2(n-1)}{n(n-1)}
 \sum_z\sum_{x\ne z}(R_D)_{xz}^2=\frac{2\|R_D\|_{\mathrm{F}}^2}{n}.
 \end{align}
	For the $\bigl(\E_{x\ne y}\Sigma_D(x,y)\bigr)_{22}$, the same calculation gives
	\begin{equation*}
		\sum_z\sum_{\substack{x\ne y\\x,y\ne z}}\bigl((C_D)_{xz}-(C_D)_{yz}\bigr)^2=
	2(n-1)\|C_D\|_{\mathrm{F}}^2.
	\end{equation*}
	Moreover, for fixed $x\ne y$, by (\ref{eq:CD}) and skew-symmetry of $C_D$,
	\[
	\sum_{z\notin\{x,y\}}
	\bigl((C_D)_{xz}-(C_D)_{yz}\bigr)
	=
	-(C_D)_{xy}+(C_D)_{yx}
	=
	-2(C_D)_{xy}.
	\]
	Hence
	\[
	\begin{aligned}
		&\qquad\sum_{z\notin\{x,y\}}
		\left(
		(C_D)_{xz}-(C_D)_{yz}
		+\frac{2(C_D)_{xy}}{n-2}
		\right)^2\\
		&=
		\sum_{z\notin\{x,y\}}
		\bigl((C_D)_{xz}-(C_D)_{yz}\bigr)^2+
		\frac{4(C_D)_{xy}}{n-2}
		\sum_{z\notin\{x,y\}}
		\bigl((C_D)_{xz}-(C_D)_{yz}\bigr)
		+
		\frac{4(C_D)_{xy}^2}{n-2}\\
		&=
		\sum_{z\notin\{x,y\}}
		\bigl((C_D)_{xz}-(C_D)_{yz}\bigr)^2
		-
		\frac{4(C_D)_{xy}^2}{n-2}.
	\end{aligned}
	\]
	Therefore
	\begin{align}\label{eq:E22}
	&\qquad\mathbb E_{x\ne y}
	\sum_{z\notin\{x,y\}}
	\left(
	(C_D)_{xz}-(C_D)_{yz}
	+\frac{2(C_D)_{xy}}{n-2}
	\right)^2\notag\\&=
	\frac{1}{n(n-1)}
	\left(
	2(n-1)\|C_D\|_{\mathrm{F}}^2-\frac{4\|C_D\|_{\mathrm{F}}^2}{n-2}
	\right)=
	\frac{2(n-3)\|C_D\|_{\mathrm{F}}^2}{(n-1)(n-2)}.
	\end{align}
	Since $ \Sigma_D(x,y)$ is symmetric, we have $\bigl(\E_{x\ne y}\Sigma_D(x,y)\bigr)_{12}=\bigl(\E_{x\ne y}\Sigma_D(x,y)\bigr)_{21}$. By (\ref{eq:RD}), we obtain
	\begin{equation}\label{eq:E12}
		\sum_{z\notin\{x,y\}}\left(\bigl((R_D)_{xz}-(R_D)_{yz}\bigr)\cdot\frac{2(C_D)_{xy}}{n-2}\right)=0.
	\end{equation}
Using (\ref{eq:RD}), (\ref{eq:CD}), and the symmetry of \( R_D \) and skew-symmetry of \( C_D \), we obtain
	\begin{align}\label{eq:E21}
		&\qquad\sum_z
	\sum_{\substack{x\ne y\\x,y\ne z}}
	\bigl((R_D)_{xz}-(R_D)_{yz}\bigr)
	\bigl((C_D)_{xz}-(C_D)_{yz}\bigr)\notag\\&=
	\sum_z\left(2(n-2)\sum_{x\ne z}(R_D)_{xz}(C_D)_{xz}-
	2\sum_{\substack{x\ne y\\x,y\ne z}}(R_D)_{xz}(C_D)_{yz}\right)\notag\\&=
	\sum_z\left(2(n-2)\sum_{x\ne z}(R_D)_{xz}(C_D)_{xz}-2\left(
	\left(\sum_{x\ne z}(R_D)_{xz}\right)
	\left(\sum_{y\ne z}(C_D)_{yz}\right)-
	\sum_{x\ne z}(R_D)_{xz}(C_D)_{xz}\right)\right)\notag\\&=
	2(n-1)
	\sum_z\sum_{x\ne z}
	(R_D)_{xz}(C_D)_{xz}\notag\\&=
	2(n-1)\langle R_D,C_D\rangle=0.
	\end{align}
	Combining (\ref{eq:E11}), (\ref{eq:E22}), (\ref{eq:E12}) and (\ref{eq:E21}) proves the lemma.
\end{proof}

\begin{lemma}\label{lem:local-gamma}
There is an absolute constant $c_3>0$ such that, for every pair of
oriented graphs $D,H$ of order $n\ge4$,
\begin{equation*}
\gamma(A,B)\ge c_3\frac{\left(\|R_D\|_{\mathrm F}^2\|R_H\|_{\mathrm F}^2+\|C_D\|_{\mathrm F}^2\|C_H\|_{\mathrm F}^2\right)^{3/2}}{n^{11/2}}.
\end{equation*}
\end{lemma}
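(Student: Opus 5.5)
We rewrite $\gamma(A,B)$ as an average of $\E_\phi|\Delta_\phi|$, apply Lemma~\ref{lem:EYphi} for each fixed choice of pairs, and then average the resulting lower bound using Lemma~\ref{lem:average-row-covariance} and convexity.

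\textbf{Step 1: reduction to $\Delta_\phi$.} Set $\tau=(xy)$. Since $\rho$ and $\sigma$ are independent and uniform, the quantity $\langle (A_\rho)_\tau,B_\sigma\rangle-\langle A_\rho,B_\sigma\rangle$ has the same law as $\langle A_{\pi\tau},B\rangle-\langle A_\pi,B\rangle$ with $\pi$ uniform in $S_n$. Condition on the ordered pair $(u,v)=(\pi(x),\pi(y))$, where $(x,y)$ is fixed. Then $\pi$ restricted to $V\setminus\{x,y\}$ is a uniform bijection $\phi$ onto $V\setminus\{u,v\}$, and the two permutations are exactly $\pi_0^\phi$ and $\pi_1^\phi$. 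Hence $\gamma(A,B)=\E_{(u,v)}\E_\phi|\Delta_\phi|$ for each fixed $(x,y)$. Because $\gamma$ does not depend on which transposition is used, we may also average over a uniform ordered pair $(x,y)$.

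\textbf{Step 2: one fixed choice of pairs.} Fix $u,v,x,y$. By Lemma~\ref{lem:exact-two-dimensional-swap},
\[
\Delta_\phi=c_{uvxy}-2Y, \qquad Y=\sum_{z\notin\{x,y\}}u^H_{xy}(z)^{\mathsf T}u^D_{uv}(\phi(z)).
\]
We apply Lemma~\ref{lem:EYphi} with the following data:
\begin{itemize}[leftmargin=*]
\item $N=n-2\ge2$ and $d=2$;
\item vectors $u^H_{xy}(z)$ and $u^D_{uv}(w)$, identified along arbitrary indexings of $V\setminus\{x,y\}$ and $V\setminus\{u,v\}$;
\item $L=7$.
\end{itemize}
The hypotheses hold by Lemma~\ref{lem:row-vector-bounds}, which gives zero sums and norms at most $7$. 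With $U=\Sigma_H(x,y)$ and $V=\Sigma_D(u,v)$, this yields
\[
\E_\phi|Y|\ge c\,\frac{\operatorname{tr}(\Sigma_H(x,y)\Sigma_D(u,v))^{3/2}}{7^4 n^{5/2}}.
\]
Since $\E_\phi Y=0$, Proposition~\ref{prop:constant-shift} gives
\[
\E_\phi|\Delta_\phi|\ge \E_\phi|Y|\ge c'\,\frac{\operatorname{tr}(\Sigma_H(x,y)\Sigma_D(u,v))^{3/2}}{n^{5/2}}.
\]

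\textbf{Step 3: averaging.} Average over the independent uniform pairs $(x,y)$ and $(u,v)$. The map $t\mapsto t^{3/2}$ is convex on $[0,\infty)$, and the trace of a product of two PSD matrices is nonnegative. Jensen's inequality therefore gives
\[
\E\,\operatorname{tr}(\Sigma_H\Sigma_D)^{3/2}\ge \bigl(\E\,\operatorname{tr}(\Sigma_H\Sigma_D)\bigr)^{3/2}.
\]
By independence and bilinearity of the trace,
\[
\E\,\operatorname{tr}(\Sigma_H\Sigma_D)=\operatorname{tr}\bigl(\E\Sigma_H\cdot\E\Sigma_D\bigr).
\]
Lemma~\ref{lem:average-row-covariance} says both expected matrices are diagonal, so this trace equals
\[
\frac{4\|R_D\|_{\mathrm F}^2\|R_H\|_{\mathrm F}^2}{n^2}+\frac{4(n-3)^2\|C_D\|_{\mathrm F}^2\|C_H\|_{\mathrm F}^2}{(n-1)^2(n-2)^2}.
\]
For $n\ge4$ we have $(n-3)/((n-1)(n-2))\ge 1/(6n)$. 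The trace is therefore at least $(\|R_D\|^2_{\mathrm F}\|R_H\|^2_{\mathrm F}+\|C_D\|^2_{\mathrm F}\|C_H\|^2_{\mathrm F})/(9n^2)$. Raising to the power $3/2$ gives a factor $n^{-3}$, and combining with the $n^{-5/2}$ from Step 2 gives the claimed $n^{-11/2}$.

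\textbf{Main obstacle.} The step needing the most care is Step 1. One must check that conditioning on $(u,v)$ produces precisely the uniform $\phi$ of the setup preceding Lemma~\ref{lem:exact-two-dimensional-swap}, with $(u,v)$ uniform and independent of the pair $(x,y)$ used for $H$. One must also check that the direction convention for $E_\pi$ matches, since a mismatch would only swap the roles of $\pi_0$ and $\pi_1$, which is harmless under the absolute value. The remaining steps are direct substitutions into earlier lemmas.
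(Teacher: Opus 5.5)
\textbf{Steps 2 and 3 are fine; Step 1 contains a false claim.} Steps 2 and 3 follow the paper's argument: Lemma~\ref{lem:EYphi} with $N=n-2$, $d=2$, $L=7$, then Proposition~\ref{prop:constant-shift}, Jensen for $t\mapsto t^{3/2}$, independence, and Lemma~\ref{lem:average-row-covariance}. Your explicit constants check out.

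In Step 1 you fix $\tau=(xy)$. It is then not true that $\langle (A_\rho)_\tau,B_\sigma\rangle-\langle A_\rho,B_\sigma\rangle$ has the law of $\langle A_{\pi\tau},B\rangle-\langle A_\pi,B\rangle$ for uniform $\pi$. Consequently, $\gamma(A,B)=\E_{(u,v)}\E_\phi|\Delta_\phi|$ fails for individual pairs $(x,y)$.

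Suppose $x,y$ are twins in $H$ (equal in- and out-neighbourhoods, no arc between them). Then $B_\tau=B$, so $\langle A_{\pi\tau},B\rangle=\langle A_\pi,B_\tau\rangle=\langle A_\pi,B\rangle$, and $\Delta_\phi\equiv 0$ for this pair. Other pairs of the same $H$ give a positive value. For instance, take $D=H$ to be a single arc $a\to b$ plus isolated vertices $c,d$: the pair $(c,d)$ gives $0$, but the pair $(a,c)$ does not. So the right-hand side genuinely depends on $(x,y)$ and cannot equal $\gamma(A,B)$ for every pair. The $\tau$-independence of $\gamma$ that you invoke holds only because $\sigma$ is random. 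It does not carry over to the quantity in which $B$ is held fixed.

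\textbf{The fix.} Compute $\langle (A_\rho)_\tau,B_\sigma\rangle=\langle A_{\rho\tau\sigma^{-1}},B\rangle$, and note $\rho\tau\sigma^{-1}=\pi\circ\bigl(\sigma(x)\,\sigma(y)\bigr)$ with $\pi=\rho\sigma^{-1}$. Since $\pi$ is uniform and independent of $\sigma$, the swapped pair $(x',y')=(\sigma(x),\sigma(y))$ of $H$-vertices is a uniform ordered pair independent of $\pi$. Hence
\[
\gamma(A,B)=\E_{(x',y')}\E_\pi\bigl|\langle A_{\pi\circ(x'\,y')},B\rangle-\langle A_\pi,B\rangle\bigr|,
\]
and this identity holds only after averaging over $(x',y')$. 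Conditioning on $(u,v)=(\pi(x'),\pi(y'))$ gives $\gamma(A,B)=\E_{(u,v),(x',y')}\E_\phi|\Delta_\phi|$, with the two pairs independent and uniform and $\phi$ a uniform bijection. This is the representation the paper uses, and it is the only form your Step 3 needs. With Step 1 replaced by this, your proof is complete and coincides with the paper's.
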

\begin{proof}[\bf Proof]
	Fix an ordered pair \( (u, v) \) of distinct vertices of \( D \) and an ordered pair \( (x, y) \) of distinct vertices of \( H \). By Lemma~\ref{lem:exact-two-dimensional-swap}, if
	\begin{equation*}
		Y_\phi=\sum_{z\in V(H)\setminus\{x,y\}}\bigl(u^H_{xy}(z)\bigr)^{\mathsf T}u^D_{uv}(\phi(z)),
	\end{equation*}
	then $\Delta_\phi=c_{uvxy}-2Y_\phi$. By Lemmas~\ref{lem:EYphi} and \ref{lem:row-vector-bounds}, we have
	\begin{equation*}
		\mathbb E_\phi|Y_\phi|\ge c\, \frac{\left(\operatorname{tr}\bigl(\Sigma_D(u,v)\Sigma_H(x,y)\bigr)\right)^{3/2}}{n^{5/2}},
	\end{equation*}
	for an absolute constant $c>0$. Moreover,
	\begin{align*}
	\mathbb E_\phi Y_\phi
	&=
	\sum_{z\in V(H)\setminus\{x,y\}}
	\bigl(u^H_{xy}(z)\bigr)^{\mathsf T}
	\mathbb E_\phi u^D_{uv}(\phi(z))\\
	&=
	\frac{1}{n-2}
	\sum_{z\in V(H)\setminus\{x,y\}}
	\bigl(u^H_{xy}(z)\bigr)^{\mathsf T}
	\sum_{w\in V(D)\setminus\{u,v\}}u^D_{uv}(w)
	=0.
	\end{align*}
	Hence Proposition~\ref{prop:constant-shift} applied to $-2Y_\phi$ yields $\mathbb E_\phi|\Delta_\phi|=
	\mathbb E_\phi|c_{uvxy}-2Y_\phi|\ge
	\mathbb E_\phi|2Y_\phi|/2=\mathbb E_\phi|Y_\phi|$. Therefore
	\begin{equation}\label{eq:Ephi}
		\mathbb E_\phi|\Delta_\phi|\ge c\, \frac{\operatorname{tr}\bigl(\Sigma_D(u,v)\Sigma_H(x,y)\bigr)^{3/2}}{n^{5/2}}.
	\end{equation}
		The random relabelling in the definition of $\gamma(A,B)$
	can be generated as follows: choose independently and uniformly an
	ordered pair $(u,v)$ for $A$ and an ordered pair $(x,y)$ for $B$, and
	then choose a uniformly random bijection $\phi:[n]\setminus\{x,y\}\longrightarrow[n]\setminus\{u,v\}$. The fixed transposition interchanges the assignments $	x\mapsto u$, $y\mapsto v$ with $x\mapsto v$, $y\mapsto u$ while leaving $\phi$ unchanged on all remaining
	vertices. Hence the resulting change in correlation is precisely
	$\Delta_\phi$, and therefore
	\begin{equation*}
		\gamma(A,B)=\E_{\rho,\sigma}\left|\langle (A_{\rho})_\tau,B_\sigma\rangle-\langle A_\rho,B_\sigma\rangle\right|=\E_{(u,v),(x,y)}\E_\phi|\Delta_\phi|.
	\end{equation*}
	By \eqref{eq:Ephi} and Jensen's inequality, we have
	\begin{equation}\label{eq:rAB}
	\gamma(A,B)\ge\frac{c}{n^{5/2}}\left(\mathbb E_{(u,v),(x,y)}\operatorname{tr}\bigl(\Sigma_D(u,v)\Sigma_H(x,y)\bigr)\right)^{3/2}.
	\end{equation}
	The two ordered pairs are independent, and hence we have $\mathbb E_{(u,v),(x,y)}\operatorname{tr}\bigl(\Sigma_D(u,v)\Sigma_H(x,y)\bigr)=\operatorname{tr}
	\left(\left(\mathbb E_{u\ne v}\Sigma_D(u,v)\right)\left(\mathbb E_{x\ne y}\Sigma_H(x,y)\right)\right)$.
	By Lemma~\ref{lem:average-row-covariance}, 
	\begin{equation}\label{eq:Euvxy}
	\mathbb E_{(u,v),(x,y)}\operatorname{tr}\bigl(\Sigma_D(u,v)\Sigma_H(x,y)\bigr)
	=\frac{4\|R_D\|_{\mathrm F}^2\|R_H\|_{\mathrm F}^2}{n^2}+
	\frac{4(n-3)^2\|C_D\|_{\mathrm F}^2\|C_H\|_{\mathrm F}^2}{(n-1)^2(n-2)^2}.
	\end{equation}
	For $n\ge4$, substituting \eqref{eq:Euvxy} into \eqref{eq:rAB} gives
	\begin{equation*}
	\gamma(A,B)\ge c_3\frac{\left(\|R_D\|_{\mathrm F}^2\|R_H\|_{\mathrm F}^2+
		\|C_D\|_{\mathrm F}^2\|C_H\|_{\mathrm F}^2\right)^{3/2}}{n^{11/2}},
	\end{equation*}
	as required.
\end{proof}
\begin{theorem}\label{thm:structural-energy}
There is an absolute constant $c>0$ such that, for every pair of
oriented graphs $D,H$ of order $n\ge4$,
\begin{equation*}
\operatorname{disc}^{+}(D,H)\operatorname{disc}^{-}(D,H)\ge c\,
\frac{\left(\|R_D\|_{\mathrm F}^{2}\|R_H\|_{\mathrm F}^{2}+\|C_D\|_{\mathrm F}^{2}\|C_H\|_{\mathrm F}^{2}\right)^3}{n^9}.
\end{equation*}
\end{theorem}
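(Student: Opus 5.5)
This follows by chaining Lemma~\ref{lem:amplification} with Lemma~\ref{lem:local-gamma}; there is essentially no new content, only bookkeeping of exponents and constants.

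By Lemma~\ref{lem:amplification}, valid for every pair of oriented graphs of order $n$,
\begin{equation*}
\operatorname{disc}^+(D,H)\operatorname{disc}^-(D,H)\ge c_0 n^2\gamma(A,B)^2 .
\end{equation*}
Since $n\ge4$, Lemma~\ref{lem:local-gamma} applies. Write $X=\|R_D\|_{\mathrm F}^2\|R_H\|_{\mathrm F}^2+\|C_D\|_{\mathrm F}^2\|C_H\|_{\mathrm F}^2$. The lemma gives $\gamma(A,B)\ge c_3X^{3/2}/n^{11/2}$.

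Both sides of that inequality are nonnegative, so we may square it:
\begin{equation*}
\gamma(A,B)^2\ge c_3^2X^3/n^{11}.
\end{equation*}
Substituting this into the first display gives
\begin{equation*}
\operatorname{disc}^+(D,H)\operatorname{disc}^-(D,H)\ge c_0c_3^2\,X^3/n^{9}.
\end{equation*}
This is the claimed bound with $c=c_0c_3^2=10^{-4}c_3^2$.

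There is no real obstacle. The only points to check are that the hypotheses match, namely that the two lemmas are stated for the same matrices $A,B$ from \eqref{eq:Aij} and that the order condition $n\ge4$ holds, and that the exponents combine as $n^{2}\cdot n^{-11}=n^{-9}$. If $X=0$ the statement is trivial, because $\operatorname{disc}^\pm\ge0$: by Observation~\ref{lem:random-center} the mean of $\langle A_\pi,B\rangle$ is $0$, so its maximum is at least $0$ and its minimum is at most $0$.
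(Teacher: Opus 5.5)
Your proposal is correct and is the paper's own argument: the paper proves Theorem~\ref{thm:structural-energy} by combining Lemma~\ref{lem:amplification} with the squared bound from Lemma~\ref{lem:local-gamma}, so that $c=c_0c_3^2$ and $n^{2}\cdot n^{-11}=n^{-9}$. Your separate treatment of the case $X=0$ is harmless but not needed, because the chained inequality already covers that case.
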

\begin{proof}[\bf Proof]
The result follows by combining Lemmas~\ref{lem:amplification} and \ref{lem:local-gamma}.
\end{proof}

\section{Proofs of the main results}\label{sec:mainresults}
For an oriented graph $D$, let $G_D$ denote its underlying support graph;
that is, $xy\in E(G_D)$ if and only if one of $x\to y$ and $y\to x$
belongs to $E(D)$.
\subsection{Tournaments}
\begin{proof}[\bf Proof of Theorem \ref{thm:tournament-intro}]
Assume first that $n\ge 4$. For a tournament $T$, we have $(c_T)_{xy}\in\{-1,1\}$ for every pair of distinct vertices $x,y$. Hence, for every three-set $\{a,b,c\}$, $(c_T)_{ab}+(c_T)_{bc}+(c_T)_{ca} \in\{-3,-1,1,3\}$. Therefore
\begin{equation*}
C(T)=\sum_{\{a,b,c\}\in\binom{V(T)}{3}}\bigl((c_T)_{ab}+(c_T)_{bc}+(c_T)_{ca}\bigr)^2\ge\binom{n}{3}.
\end{equation*}
By Lemma~\ref{thm:curl-identity}, we have $\|C_T\|_{\mathrm F}^2={C(T)}/{2n}\ge
{(n-1)(n-2)}/{12}$. The same estimate holds for $U$. Hence, by Theorem~\ref{thm:structural-energy},
\begin{equation*}
\operatorname{disc}^{+}(T,U)\operatorname{disc}^{-}(T,U)\ge c\,
\frac{\left(\|R_T\|_{\mathrm F}^{2}\|R_U\|_{\mathrm F}^{2}+\|C_T\|_{\mathrm F}^{2}\|C_U\|_{\mathrm F}^{2}\right)^3}{n^9}\ge c\,\frac{\left(\|C_T\|_{\mathrm F}^{2}\|C_U\|_{\mathrm F}^{2}\right)^3}{n^9}\ge c_1 n^3,
\end{equation*}
for an absolute constant $c_1>0$. For $n=2,3$, a direct check gives $\operatorname{disc}^{+}(T,U)\ge1/2$ and $\operatorname{disc}^{-}(T,U)\ge1/2$. After decreasing the absolute constant if necessary, the product bound therefore holds for every \( n \geq 2 \). The discrepancy bound follows from \( \text{disc}(T, U)^2 \geq \text{disc}^+(T, U) \, \text{disc}^-(T, U) \). This completes the proof.
\end{proof}
\subsection{Good four-cycles}

Given a graph $G$ with vertex set $V$, and a $4$-cycle $C$ in the complete
graph $K_V$, Bollob\'as and Scott \cite{BollobasScottGraphs2011} say that $C$ is \emph{good} (in $G$) if either $|E(G)\cap E(C)|$ is odd, or $E(G)\cap E(C)$ is a pair of vertex-disjoint
edges. (So the bad cases are when $E(C)\subset E(G)$ or $E(C)\cap E(G)=\varnothing$ or $E(G)\cap E(C)$ is a path of length $2$.)
\begin{figure}[ht]
	\centering
	\begin{tikzpicture}[
		scale=1.15,
		vertex/.style={circle,fill=black,inner sep=2.8pt}]
		\begin{scope}[xshift=0cm]
			\node[vertex] (a1) at (0,1.6) {};
			\node[vertex] (b1) at (2.8,1.6) {};
			\node[vertex] (c1) at (0,0) {};
			\node[vertex] (d1) at (2.8,0) {};
			\draw (a1)--(b1);
			\draw (a1)--(d1);
			\draw (c1)--(b1);
			\draw[dashed] (c1)--(d1);
		\end{scope}
		\begin{scope}[xshift=4.8cm]
			\node[vertex] (a2) at (0,1.6) {};
			\node[vertex] (b2) at (2.8,1.6) {};
			\node[vertex] (c2) at (0,0) {};
			\node[vertex] (d2) at (2.8,0) {};
			\draw[dashed] (a2)--(b2);
			\draw (a2)--(d2);
			\draw (c2)--(b2);
			\draw[dashed] (c2)--(d2);
		\end{scope}
		\begin{scope}[xshift=9.6cm]
			\node[vertex] (a3) at (0,1.6) {};
			\node[vertex] (b3) at (2.8,1.6) {};
			\node[vertex] (c3) at (0,0) {};
			\node[vertex] (d3) at (2.8,0) {};
			\draw (a3)--(b3);
			\draw[dashed] (a3)--(d3);
			\draw[dashed] (c3)--(b3);
			\draw[dashed] (c3)--(d3);
		\end{scope}
		\node at (6.2,-0.6) {\itshape Good $4$-cycles};
	\end{tikzpicture}
\end{figure}
\begin{lemma}[\textbf{Bollob\'as and Scott} \cite{BollobasScottGraphs2011}]\label{lem:jgt2011}
Suppose that \( G \) has \( n \) vertices and \( p \binom{n}{2} \) edges, where \( \min\{p, 1-p\} \geq 16/n \). Then the probability that a 4-cycle chosen uniformly at random is good is at least $p^2(1-p)^2 / 5040$.
\end{lemma}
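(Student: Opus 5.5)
The plan is to reduce the goodness of a $4$-cycle to a condition on two perfect matchings of a $4$-set, and then to a variance estimate for neighbourhood differences.

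\textbf{Step 1: goodness means the two matchings differ.} A $4$-cycle $C=abcda$ in $K_V$ is the union of the two perfect matchings $\{ab,cd\}$ and $\{bc,da\}$ of $\{a,b,c,d\}$. Let $m,m'$ be the numbers of edges of $G$ in these two matchings. The bad configurations are exactly those with $m=m'$:
\begin{itemize}
\item $(0,0)$ means $E(C)\cap E(G)=\varnothing$;
\item $(2,2)$ means $E(C)\subseteq E(G)$;
\item $(1,1)$ means two edges from different matchings, which always share a vertex, i.e.\ a path of length $2$.
\end{itemize}
Every configuration with $m\ne m'$ has either an odd number of edges or a pair of disjoint edges, so it is good. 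Choosing a uniform ordered $4$-tuple $(a,b,c,d)$ of distinct vertices gives a uniform $4$-cycle. Hence
\[
\Pr(C\text{ good})\ \ge\ \frac{1}{4\,n(n-1)(n-2)(n-3)}\sum_{(a,b,c,d)}\bigl(q_{ab}+q_{cd}-q_{bc}-q_{da}\bigr)^2,
\]
using $(m-m')^2\le 4$. After relabelling, this sum has the same form as in the proof of Lemma~\ref{thm:four-point-identity}, but with the raw $0/1$ adjacency $q$ in place of $R_D$.

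\textbf{Step 2: rewrite as a within-pair variance.} I will not pass through $R$. Instead I will repeat the computation in the proof of Lemma~\ref{thm:four-point-identity} directly. Fix the pair $(b,c)$ and set $y_a=q_{ab}-q_{ac}\in\{-1,0,1\}$ for $a\notin\{b,c\}$. The sum becomes
\[
\sum_{b\ne c}\sum_{\substack{a\ne d\\ a,d\notin\{b,c\}}}(y_a-y_d)^2 .
\]
This is $(n-2)^2$ times twice the empirical variance of $(y_a)_a$. The problem therefore reduces to showing that, for a positive fraction (in terms of $p(1-p)$) of pairs $(b,c)$, the vector $y$ is far from constant.

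\textbf{Step 3 (main obstacle): a lower bound on the variance.} Write $k_+=|N(b)\setminus N(c)|$ and $k_-=|N(c)\setminus N(b)|$, excluding $b,c$. The variance is at least of order $(k_++k_-)(n-k_+-k_-)/n^2$, since a $\{-1,0,1\}$ vector with $k$ nonzero and $n-2-k$ zero entries has that much spread. Threshold-like graphs show one cannot hope for crossing differences, so only the size $k=|N(b)\triangle N(c)|$ matters, and I need $k$ to be neither $o(n)$ nor $n-o(n)$ for many pairs. Double counting gives
\[
\sum_{b\ne c}k(b,c)=\sum_a 2d(a)\bigl(n-1-d(a)\bigr),
\]
which I will lower-bound by roughly $n^3p(1-p)$ via convexity when degrees concentrate. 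When degrees are spread, many vertices have degree in $[pn/4,\,n-(1-p)n/4]$ anyway. The upper cutoff $k\le n-\epsilon n$ must also be handled. If $k>(1-\epsilon)n$ for most pairs, the neighbourhoods are nearly complementary for most pairs, and three vertices $b,c,e$ pairwise nearly complementary is impossible; this gives a contradiction by counting triples. The hypothesis $\min\{p,1-p\}\ge16/n$ is used to absorb the $O(1)$ boundary terms (excluding $b,c$ and diagonal effects) into the main term, which is why a threshold of this form appears. Collecting constants should give a bound of the form $p^2(1-p)^2/5040$. I expect the delicate bookkeeping to be exactly this case analysis of degree spread versus symmetric-difference size, and I would first try to run it with crude constants such as $\epsilon=1/4$.
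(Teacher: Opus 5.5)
Steps~1 and~2 are correct. The proposal breaks down in Step~3: the term you discard in the variance bound is exactly the one that carries the count for complete bipartite graphs. Since the paper imports this lemma from Bollob\'as and Scott without proof, your argument has to stand on its own.

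What is right: a $4$-cycle is good exactly when its two perfect matchings contain different numbers of edges of $G$. For fixed $(b,c)$, with $N=n-2$, one has exactly
\[
\sum_{a\ne d}(y_a-y_d)^2=2\bigl(Nk-(k_+-k_-)^2\bigr)=2\bigl(k(N-k)+4k_+k_-\bigr).
\]
In Step~3 you drop $4k_+k_-$ and then try to show that $k$ lies in a middle range for many pairs. That target is false.

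Take $G=K_{m,n-m}$, or its complement, two disjoint cliques.
\begin{itemize}
\item Two vertices on the same side are twins, so $k=0$.
\item Two vertices on opposite sides have complementary neighbourhoods off $\{b,c\}$, so $k=N$.
\end{itemize}
Hence $k(N-k)=0$ for \emph{every} pair, and your lower bound is $0$. Yet every four-set with two vertices on each side carries two good $4$-cycles, so about a $p^2$ fraction of all $4$-cycles is good (a quarter when $m=n/2$). In these graphs the whole contribution comes from the cross pairs, through $4k_+k_-=4(n-m-1)(m-1)$.

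Your triangle argument is valid but does not help here. It bounds the number of nearly complementary pairs (by Mantel, at most about half of all pairs, attained by $K_{n/2,n/2}$). It says nothing against all the remaining pairs being twins. Similarly, threshold graphs only show that you cannot rely on $k_+k_-$ alone. Complete bipartite graphs show that you cannot rely on $k(N-k)$ alone, so ``only $k$ matters'' does not follow.

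The repair is to keep both terms. Write $s=\min\{k_+,k_-\}$ and $\ell=\max\{k_+,k_-\}$. Then $Nk-(k_+-k_-)^2\ge\max\{2Ns,\ \ell(N-\ell)\}$. So a pair contributes at least $\eta Nk$ unless $\ell>(1-2\eta)N$ and $s<\eta k/2$. In that exceptional case one of $b,c$ is adjacent to almost every vertex and the other to almost none.

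Counting edges between the nearly complete and the nearly isolated vertices shows these two sets cannot both be large. So, with $\eta$ a small multiple of $p(1-p)$, the exceptional pairs carry only a small part of $\sum_{b\ne c}k=2\sum_a d(a)(n-1-d(a))$. This is where the hypothesis $\min\{p,1-p\}\ge 16/n$ should absorb the additive errors.

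You would still need a correct proof that $\sum_a d(a)(n-1-d(a))\ge c\,p(1-p)n^3$. The map $d\mapsto d(n-1-d)$ is concave, so Jensen gives an upper bound, not a lower one. A lower bound can instead be obtained by splitting the vertices at degree $(n-1)/2$ and counting edges and non-edges within and between the two parts. Finally, reaching the specific constant $1/5040$ would require tracking all of these constants explicitly.
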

\begin{lemma}\label{lem:QD>gD}
	Let $G$ be a graph, let $D$ be any orientation of $G$, and let \( g(G) \) denote the number of 4-cycles in \( K_{V(G)} \) that are good for \( G \). Then
	\begin{equation*}
	Q(D)\ge\frac23 g(G).
	\end{equation*}
\end{lemma}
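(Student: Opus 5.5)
Since $Q(D)$ depends only on the support $q_D$, which is the adjacency matrix of $G$, the orientation is irrelevant: $Q(D)$ is a function of $G$ alone. The plan is a local, four-set-by-four-set comparison. Each $4$-cycle in $K_V$ lives on a unique four-set $S$. Each four-set carries exactly three $4$-cycles, and each $4$-cycle is the complement in $K_S$ of one perfect matching of $S$. So it suffices to show that for every four-set $S$,
\begin{equation*}
Q_S(D)=\sum_{1\le i<j\le 3}\bigl(m_i(S)-m_j(S)\bigr)^2\ \ge\ \frac23\, g_S,
\end{equation*}
where $g_S\in\{0,1,2,3\}$ is the number of good $4$-cycles on $S$. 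Summing over $S$ then gives $Q(D)\ge \frac23 g(G)$.

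Label the three matchings $M_1,M_2,M_3$ of $S$. The $4$-cycle $C_k$ has edge set $M_i\cup M_j$ with $\{i,j,k\}=\{1,2,3\}$. The edges of $G$ on $C_k$ are $m_i+m_j$ in number, where $m_i=m_i(S)\in\{0,1,2\}$. Goodness of $C_k$ then reads as follows:
\begin{itemize}
\item if $m_i+m_j$ is odd, exactly one of $m_i,m_j$ is $1$;
\item if $m_i+m_j=2$ and the two edges are disjoint, then $\{m_i,m_j\}=\{2,0\}$;
\item if $m_i=m_j=1$, the two edges lie in different matchings and so share a vertex, forming a path of length $2$ (bad);
\item if $m_i+m_j\in\{0,4\}$, the cycle is bad.
\end{itemize}
Hence $C_k$ is good if and only if $m_i\ne m_j$. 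The $4$-cycles on $S$ therefore correspond bijectively to the unordered pairs $\{i,j\}$, with $C_k$ good exactly when $m_i\neq m_j$.

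With this reformulation the inequality is immediate. Each good cycle contributes a term $(m_i-m_j)^2\ge1$ to $Q_S(D)$, so $Q_S(D)\ge g_S$. This is even stronger than the stated factor $\frac23$, and the $\frac23$ is comfortably satisfied.

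The only real obstacle is getting the case analysis right: checking that the equality case $m_i=m_j=1$ genuinely gives a path and not a disjoint pair. Two edges from distinct perfect matchings of a four-set always intersect, so this holds. We should also confirm that the identification between the three $4$-cycles and the pairs of matchings is a bijection, so that no cycle is double-counted.
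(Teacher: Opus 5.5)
Your proof is correct, and it is a sharper version of the paper's argument rather than the same one. Both proofs work four-set by four-set. The paper shows only that if $m_1(S)=m_2(S)=m_3(S)=k$ then no $4$-cycle on $S$ is good. It does this by cases on $k=0,1,2$, and for $k=1$ it identifies $G[S]$ as $K_{1,3}$ or $K_3\cup K_1$. It then uses integrality to get $Q_S(D)\ge 2$ whenever $S$ carries a good cycle, and pays the factor $\frac23$ because $S$ carries at most three $4$-cycles.

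You instead pair each cycle $C_k=M_i\cup M_j$ with its own term $(m_i-m_j)^2$ in $Q_S(D)$, and show that $C_k$ is good exactly when $m_i\neq m_j$. The key point, which you handle correctly, is that two support edges taken from different perfect matchings of a four-set always share a vertex. This gives $Q_S(D)\ge g_S$ term by term, hence the stronger inequality $Q(D)\ge g(G)$. For this inequality you only need the direction ``good implies $m_i\neq m_j$'', which your first two cases already establish; the converse, including the case $\{m_i,m_j\}=\{0,2\}$, is not needed.

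The gain from $\frac23$ to $1$ only changes the absolute constant in the proof of Theorem~\ref{thm:moderate-density}. Still, your route is cleaner: it avoids the structural analysis of $G[S]$ and shows exactly which matching differences the good cycles detect.
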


\begin{proof}[\bf Proof]
	Fix a four-set $S$, and let $m_1(S),m_2(S),m_3(S)$ be the numbers of
	support edges in its three perfect matchings, as in Section~\ref{subs:four-vertex}.
	\begin{claim}\label{claim:m1m2m3}
	If there is at least one good $4$-cycle on $S$, then $m_1(S)$, $m_2(S)$, $m_3(S)$ are not all equal.
	\end{claim}
	\begin{proof}
	Suppose, to the contrary, that $m_1(S)=m_2(S)=m_3(S)=k$. Since each perfect matching contains two edges, we have $k\in\{0,1,2\}$.
	
	\item \textbf{Case 1. $k=0$.}
	
	The three perfect matchings of $S$ partition the six edges of $K_4$.
	Thus $m_1(S)=m_2(S)=m_3(S)=0$ implies $E(G[S])=\varnothing$. Hence every $4$-cycle on $S$ contains no edge of $G$, and therefore is not good.
	
	\item \textbf{Case 2. $k=2$.}
	
	In this case every edge in each of the three perfect matchings belongs
	to $G[S]$. Hence $G[S]=K_4$. Thus every $4$-cycle on $S$ contains all four of its edges in $G$, and hence no such cycle is good.
	
	\item \textbf{Case 3. $k=1$.}
	
	Since the three perfect matchings partition the edges of $K_4$, $e(G[S])=m_1(S)+m_2(S)+m_3(S)=3$. Moreover, $G[S]$ cannot contain two vertex-disjoint edges. Indeed, such two edges form one of the three perfect matchings, which would
	force the corresponding $m_i(S)$ to be $2$, a contradiction. Hence the three edges of $G[S]$ are pairwise intersecting, and therefore $G[S]\cong K_{1,3}$ or $G[S]\cong K_3\cup K_1$.
	
	If $G[S]\cong K_{1,3}$, every $4$-cycle on $S$ contains exactly two
	edges of $G$, and these two edges are adjacent. If
	$G[S]\cong K_3\cup K_1$, every $4$-cycle on $S$ again contains exactly
	two edges of $G$, and these two edges are adjacent. Thus no $4$-cycle
	on $S$ is good.
	
	In all three cases, $S$ contains no good $4$-cycle, contradicting the
	assumption. Therefore $m_1(S),m_2(S),m_3(S)$ are not all equal.
	\end{proof}
	By Claim \ref{claim:m1m2m3}, if $S$ contains a good $4$-cycle, then
	\begin{equation*}
		\bigl(m_1(S)-m_2(S)\bigr)^2+\bigl(m_1(S)-m_3(S)\bigr)^2+\bigl(m_2(S)-m_3(S)\bigr)^2\ge2.
	\end{equation*}
	On the other hand, there are exactly three $4$-cycles on a fixed
	four-set $S$. Thus, if $S$ contains at least one good $4$-cycle, its
	contribution to $Q(D)$ is at least $2$, while the number of good
	$4$-cycles on $S$ is at most $3$. Hence 
	\begin{equation*}
	Q(D)= \sum_{S \in \binom{V(G)}{4}} Q_S(D)\geq\frac{2}{3} \sum_{S \in \binom{V(G)}{4}} g(S)= \frac{2}{3}g(G),
	\end{equation*}
	as required.
\end{proof}
\subsection{Oriented graphs of moderate density}
\begin{proof}[\bf Proof of Theorem~\ref{thm:moderate-density}]
	Let $G_D$ and $G_H$ be the underlying support graphs of $D$ and $H$,
	respectively. Since $e(G_D)=e(D)=p\binom{n}{2}$ and $e(G_H)=e(H)=q\binom{n}{2}$, the assumptions of Theorem~\ref{thm:moderate-density} imply $\min\{p,1-p\}\ge{16}/{n}$ and $\min\{q,1-q\}\ge{16}/{n}$. There are $3\binom{n}{4}$ distinct $4$-cycles in $K_n$. By Lemmas~\ref{lem:jgt2011} and \ref{lem:QD>gD}, we have
	\begin{equation*}
		Q(D)\ge\frac23g(G_D)\ge\frac{p^2(1-p)^2}{2520}\binom{n}{4}.
	\end{equation*}
	Using Lemma~\ref{thm:four-point-identity}, we have
	\begin{equation*}
		\|R_D\|_{\mathrm F}^2=
	\frac{Q(D)}{2(n-1)(n-2)}\ge
	\frac{p^2(1-p)^2}{5040}
	\frac{\binom{n}{4}}{(n-1)(n-2)}=
	\frac{p^2(1-p)^2}{120960}\,n(n-3).
	\end{equation*}
	The hypotheses imply $n\ge32$, and hence $n-3\ge n/2$. Therefore $\|R_D\|_{\mathrm F}^2
	\ge c_f p^2(1-p)^2n^2$, where $c_f>0$ is an absolute constant. Applying the same argument to $H$ gives $	\|R_H\|_{\mathrm F}^2\ge c_f q^2(1-q)^2n^2$. Consequently, $\|R_D\|_{\mathrm F}^2\|R_H\|_{\mathrm F}^2\ge c_f^2\bigl(p(1-p)q(1-q)\bigr)^2n^4$. By Theorem~\ref{thm:structural-energy}, we have
	\begin{equation*}
	\operatorname{disc}^{+}(D,H)\operatorname{disc}^{-}(D,H)\ge  c\,
	\frac{\left(\|R_D\|_{\mathrm F}^{2}\|R_H\|_{\mathrm F}^{2}\right)^3}{n^9}\ge c_2
	\bigl(p(1-p)q(1-q)\bigr)^6n^3,
	\end{equation*}
	for an absolute constant $c_2>0$. Finally, $\operatorname{disc}(D,H)\ge\sqrt{c_2}\,
	\bigl(p(1-p)q(1-q)\bigr)^3n^{3/2}$. This proves Theorem~\ref{thm:moderate-density}.
\end{proof}
\section{Intersections of several graphs}\label{sec:somegraphs}
The following lemma reduces the discrepancy of several graphs to that
of any two members of the family. It applies to both simple undirected
graphs and oriented graphs; tournaments are included in the latter case.
\begin{lemma}\label{lem:reduction-to-two-graphs}
	Let $r\ge2$ and $n\ge2$, and let $G_1,\ldots,G_r$ be either all simple
	undirected graphs or all oriented graphs of order $n$. Write $e(G_k)=p_k\binom{n}{2}$ for $1\le k\le r$ and set
	\begin{equation*}
		\kappa=
	\begin{cases}
		1, & \text{in the undirected case},\\
		2, & \text{in the oriented case}.
	\end{cases}
	\end{equation*}
	Then, for every pair $1\le i<j\le r$,
	\begin{equation*}
		\operatorname{disc}_r^+(G_1,\ldots,G_r)
	\operatorname{disc}_r^-(G_1,\ldots,G_r)\ge
	\left(
	\prod_{\ell\ne i,j}\frac{p_\ell}{\kappa}
	\right)^2
	\operatorname{disc}^+(G_i,G_j)
	\operatorname{disc}^-(G_i,G_j).
	\end{equation*}
\end{lemma}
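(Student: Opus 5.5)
We condition on the relabellings of $G_i$ and $G_j$ and average over all the others. Fix $\pi_i,\pi_j\in S_n$, and let $\pi_\ell$ be independent uniform random permutations for $\ell\ne i,j$. Each ordered pair (or unordered pair, in the undirected case) lies in $E_{\pi_\ell}(G_\ell)$ with probability $p_\ell/\kappa$, independently over $\ell$. Indeed, in the oriented case a fixed arc $a\to b$ is present in $E_{\pi_\ell}(G_\ell)$ exactly when $(\pi_\ell(a),\pi_\ell(b))$ is an arc of $G_\ell$. This ordered pair is uniform over $n(n-1)$ ordered pairs, and $p_\ell\binom n2$ of them are arcs, so the probability is $p_\ell/2$. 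Let $P=\prod_{\ell\ne i,j}p_\ell/\kappa$. Linearity of expectation gives
\[
\E\Bigl|\bigcap_{k=1}^r E_{\pi_k}(G_k)\Bigr|=P\,\bigl|E_{\pi_i}(G_i)\cap E_{\pi_j}(G_j)\bigr|.
\]

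Next we check the normalization. In the oriented case,
\[
P\cdot\frac{p_ip_j}{2}\binom n2=\frac{1}{2^{r-1}}\binom n2\prod_k p_k ,
\]
which is exactly the centering term in $\operatorname{disc}_r^\pm$. In the undirected case, $P\,p_ip_j\binom n2=\binom n2\prod_k p_k$, which again matches. The two-graph discrepancy depends only on the relative relabelling, so we may take $\pi_j=\mathrm{id}$. Choosing $\pi_i$ to attain $\max_\pi|E_\pi(G_i)\cap E(G_j)|$, the expectation of the centered $r$-fold intersection equals $P\operatorname{disc}^+(G_i,G_j)$. Some choice of the remaining permutations does at least as well as the average, so
\[
\operatorname{disc}_r^+\ge P\operatorname{disc}^+(G_i,G_j).
\]
Symmetrically, choosing $\pi_i$ to attain the minimum and taking a choice of the other permutations at most the average gives $\operatorname{disc}_r^-\ge P\operatorname{disc}^-(G_i,G_j)$.

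Multiplying the two inequalities gives the claim, provided both right-hand sides are nonnegative. They are, because $\disc^\pm(G_i,G_j)\ge0$: the average over $\pi$ of the centered overlap is zero by Observation~\ref{lem:random-center} and its undirected analogue. Since $P\ge0$, the product inequality follows.

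The only points needing care are the exact per-pair probability $p_\ell/\kappa$ in the oriented case, and the consistency of the centering constants. Both are the routine computations given above, so I expect no real obstacle.
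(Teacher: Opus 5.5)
Your proposal is correct and follows the paper's proof essentially step for step. You condition on $\pi_i,\pi_j$ and use that each pair lies in $E_{\pi_\ell}(G_\ell)$ with probability $p_\ell/\kappa$, independently over $\ell$, to get the conditional expectation $P\bigl(|E_{\pi_i}(G_i)\cap E_{\pi_j}(G_j)|-\frac{p_ip_j}{\kappa}\binom n2\bigr)$, and then bound $\operatorname{disc}_r^\pm$ by its extremes; your explicit check that $\operatorname{disc}^\pm(G_i,G_j)\ge 0$, which is needed to multiply the two inequalities, is a detail the paper leaves implicit.
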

\begin{proof}[\bf Proof]
	Choose $\pi_1,\ldots,\pi_r$ independently and uniformly from $S_n$,
	and write
	\begin{equation*}
	F_r(\boldsymbol{\pi})=\left|\bigcap_{k=1}^r E_{\pi_k}(G_k)
	\right|-\frac{1}{\kappa^{r-1}}\binom{n}{2}\prod_{k=1}^r p_k.
	\end{equation*}
	Fix $1\le i<j\le r$. Conditional on $\pi_i,\pi_j$, the remaining
	relabellings are still independent and uniform. For every fixed
	edge or directed edge $e$ and every $\ell\ne i,j$, $\mathbb P\bigl(e\in E_{\pi_\ell}(G_\ell)\bigr)={e(G_\ell)}/{\left(\kappa\binom{n}{2}\right)}={p_\ell}/{\kappa}$.
	Therefore
	\begin{align}\label{eq:conditional-two-graph-reduction}
		\mathbb E_{\boldsymbol{\pi}}\!\left(
	F_r(\boldsymbol{\pi})
	\,\middle|\,
	\pi_i,\pi_j
	\right)&=\mathbb{E}_{\boldsymbol{\pi}} \left( \left|\bigcap_{k=1}^r E_{\pi_k}(G_k)
	\right|\middle| \pi_i, \pi_j \right)-
	\frac{1}{\kappa^{r-1}}
	\binom{n}{2}\prod_{k=1}^r p_k\notag\\&=	\sum_{e\in E_{\pi_i}(G_i)\cap E_{\pi_j}(G_j)}
	\prod_{\ell\ne i,j}
	\mathbb P\bigl(e\in E_{\pi_\ell}(G_\ell)\bigr)-
	\frac{1}{\kappa^{r-1}}
	\binom{n}{2}\prod_{k=1}^r p_k\notag\\&=
	\left(\prod_{\ell\ne i,j}\frac{p_\ell}{\kappa}
	\right)\left|E_{\pi_i}(G_i)\cap E_{\pi_j}(G_j)\right|-\left(
	\prod_{\ell\ne i,j}\frac{p_\ell}{\kappa}
	\right)\frac{p_ip_j}{\kappa}\binom{n}{2}\notag\\&=
	\left(\prod_{\ell\ne i,j}\frac{p_\ell}{\kappa}\right)
	\left(\left|E_{\pi_i\pi_j^{-1}}(G_i)\cap E(G_j)\right|-\frac{p_ip_j}{\kappa}\binom{n}{2}
	\right).
	\end{align}
	By the definitions of the positive and negative discrepancies, we have $-\operatorname{disc}_r^-(G_1,\ldots,G_r)\le F_r(\boldsymbol{\pi})\le\operatorname{disc}_r^+(G_1,\ldots,G_r)$.
	Taking conditional expectations preserves these inequalities.
	Maximizing the conditional expectation over $\pi_i,\pi_j$ and using
	\eqref{eq:conditional-two-graph-reduction}, we obtain
	\begin{equation*}
	\operatorname{disc}_r^+(G_1,\ldots,G_r)\ge
	\max_{\pi_i,\pi_j}
	\mathbb E_{\boldsymbol{\pi}}\!\left(F_r(\boldsymbol{\pi})\,\middle|\,\pi_i,\pi_j
	\right)=\left(\prod_{\ell\ne i,j}\frac{p_\ell}{\kappa}\right)
	\operatorname{disc}^+(G_i,G_j).
	\end{equation*}
	Similarly,
	\begin{equation*}
	\operatorname{disc}_r^-(G_1,\ldots,G_r)\ge-\min_{\pi_i,\pi_j}\mathbb E_{\boldsymbol{\pi}}\!\left(F_r(\boldsymbol{\pi})\,\middle|\,\pi_i,\pi_j
	\right)=
	\left(\prod_{\ell\ne i,j}\frac{p_\ell}{\kappa}\right)
	\operatorname{disc}^-(G_i,G_j).
	\end{equation*}
	 Multiplying the two inequalities proves the product bound.
\end{proof}
\begin{proof}[\bf Proof of Theorem \ref{thm:somegraphs}]
Fix indices $1\le i<j\le r$ satisfying ${16}/{n}\le p_i,p_j\le1-{16}/{n}$. By Lemma~\ref{lem:reduction-to-two-graphs}, with $\kappa=1$, and
Theorem~\ref{thm:jgt2011}, we have
\begin{align*}
	\operatorname{disc}_r^+(G_1,\ldots,G_r)
	\operatorname{disc}_r^-(G_1,\ldots,G_r)&\ge
	\left(\prod_{\ell\ne i,j}p_\ell\right)^2
	\operatorname{disc}^+(G_i,G_j)
	\operatorname{disc}^-(G_i,G_j)\\&\ge
	\frac{n^3}{10^{20}}
	\left(\prod_{\ell\ne i,j}p_\ell\right)^2
	\bigl(p_i(1-p_i)p_j(1-p_j)\bigr)^4.
\end{align*}
Taking the maximum over all pairs $i<j$ satisfying the density
condition proves the theorem.
\end{proof}
\begin{proof}[\bf Proof of Theorem~\ref{thm:sometournaments}]
	Since each $T_\ell$ is a tournament, we have $p_\ell=1$.
	Applying Lemma~\ref{lem:reduction-to-two-graphs} with $\kappa=2$
	to any pair $i<j$, and then using Theorem~\ref{thm:tournament-intro}, gives
	\begin{equation*}
	\operatorname{disc}_r^+(T_1,\ldots,T_r)
	\operatorname{disc}_r^-(T_1,\ldots,T_r)\ge
	\left(
	\prod_{\ell\ne i,j}\frac12
	\right)^2
	\operatorname{disc}^+(T_i,T_j)
	\operatorname{disc}^-(T_i,T_j)\ge
	\frac{c_1}{4^{r-2}}\,n^3.
	\end{equation*}
	This proves the theorem.
\end{proof}
\begin{proof}[\bf Proof of Theorem~\ref{thm:someoriented}]
	Fix indices $1\le i<j\le r$ satisfying ${16}/{n}\le p_i,p_j\le1-{16}/{n}$. By Lemma~\ref{lem:reduction-to-two-graphs}, with $\kappa=2$, and
	Theorem~\ref{thm:moderate-density},
	\begin{align*}
	\operatorname{disc}_r^+(D_1,\ldots,D_r)
	\operatorname{disc}_r^-(D_1,\ldots,D_r)&\ge
	\left(
	\prod_{\ell\ne i,j}\frac{p_\ell}{2}
	\right)^2
	\operatorname{disc}^+(D_i,D_j)
	\operatorname{disc}^-(D_i,D_j)\\
	&\ge
	c_2n^3
	\left(
	\prod_{\ell\ne i,j}\frac{p_\ell}{2}
	\right)^2
	\bigl(p_i(1-p_i)p_j(1-p_j)\bigr)^6.
	\end{align*}
	Taking the maximum over all pairs $i<j$ satisfying the density
	condition proves the theorem.
\end{proof}
\section*{Declaration on the use of AI}
The authors used ChatGPT 5.6 Pro to assist in discussing proof strategies, checking proofs, and improving exposition.

\end{document}